\newtheorem{theorem}{Theorem}
\newtheorem{lemma}{Lemma}
\newtheorem{proposition}{Proposition}
\newtheorem{definition}{Definition}
\newcommand{\vct}{\boldsymbol }
\newcommand{\mat}{\mathbf }
\newcommand{\op}{\mathrm{op}}
\renewcommand{\bar}{\overline}
\def\BState{\State\hskip-\ALG@thistlm}
\newcommand{\R}{ \mathbb{R} }
\newcommand{\E}{ \mathbb{E} }
\newcommand{\BFA}{\mathbf A}
\newcommand{\BFB}{\mathbf B}
\newcommand{\BFC}{\mathbf C}
\newcommand{\BFD}{\mathbf D}
\newcommand{\BFZ}{\mathbf Z}
\newcommand{\BFr}{\mathbf r}
\newcommand{\BFd}{\mathbf d}
\newcommand{\BFf}{\mathbf f}
\newcommand{\BFp}{\mathbf p}
\newcommand{\BFx}{\mathbf x}
\newcommand{\BFy}{\mathbf y}
\newcommand{\BFz}{\mathbf z}
\newcommand{\BFu}{\mathbf u}
\newcommand{\BFxi}{\mathbf \xi}
\newcommand{\BFzero}{\mathbf 0}
\newcommand{\BFone}{\mathbf 1}
\newcommand{\BFlambda}{\mathbf \lambda}
\newcommand{\BFDelta}{\mathbf \Delta}
\newcommand{\Fluid}{\mathrm{F}}
\begin{document}



\title{Constant Regret Re-solving Heuristics for Price-based Revenue Management}
\author[1]{Yining Wang}
\author[2]{He Wang}
\affil[1]{Warrington College of Business, University of Florida}
\affil[2]{School of Industrial and Systems Engineering, Georgia Institute of Technology}

\maketitle

\begin{abstract}
Price-based revenue management is an important problem in operations management with many practical applications.
The problem considers a retailer who sells a product (or multiple products) over $T$ consecutive time periods and is subject to constraints
on the initial inventory levels.
While the optimal pricing policy could be obtained via dynamic programming,
such an approach is sometimes undesirable because of high computational costs.
Approximate policies, such as the re-solving heuristics, are often applied as computationally tractable alternatives.
In this paper, we show the following two results.
First, we prove that a natural re-solving heuristic attains $O(1)$ regret compared to
the value of the optimal policy.
This improves the $O(\ln T)$ regret upper bound established in the prior work of \cite{jasin2014reoptimization}.
Second, we prove that there is an $\Omega(\ln T)$ gap between the value of the optimal policy and that of the fluid model.
This complements our upper bound result by showing that the fluid is not an adequate information-relaxed benchmark
when analyzing price-based revenue management algorithms.

\textbf{Keywords:} re-solving, self-adjusting controls, price-based revenue management, dynamic pricing
\end{abstract}


\section{Introduction}
We study a classic price-based revenue management problem where a retailer sells either a single product { or a group  of products} over a finite horizon given fixed initial inventory \citep{gallego1994optimal,gallego1997multiproduct}.
More specifically, consider $n$ products and $T$ consecutive selling periods 
with an initial inventory level $\BFy_0 \in \mathbb{R}^n_+$.
{
At time $t$, the retailer posts a price vector $\BFp_t\in \mathcal{P} \subset \mathbb{R}^n_+$. 
Suppose $\BFf: \mathcal{P} \to \mathbb{R}^n_{+}$ is a fixed demand function.
Let $(\Omega, \mathcal{F}, \{\mathcal{F}_t\}_{t=1}^{T}, \Pr)$ be a filtered probability space.}
The realized demand $\BFd_t$, realized revenue $\BFr_t$, 
 and remaining inventory level $\BFy_t$ at the end of period $t$ are governed by
 {
 \begin{equation}
 \BFd_t = \BFf(\BFp_t) + \BFxi_t, \quad r_t = \BFp_t^\top \min\{\BFd_t, \BFy_{t-1}\}, \quad \BFy_t = \max\{0, \BFy_{t-1}-\BFd_t\},
 \label{eq:model}
 \end{equation}
 where $\{\BFxi_t\}_{t=1}^T$ is a martingale difference sequence adapted to the filtration $\{\mathcal{F}_t\}_{t=1}^{T}$
 (see Sec.~\ref{sec:main-results} for detailed assumptions.)}
 
 The retailer's objective is to design an admissible pricing policy $\pi$ to maximize the expected revenue
 over the $T$ periods.
 A pricing policy $\pi$ can be represented by $\pi=(\pi_1,\cdots,\pi_T)$, where 
 $\pi_t$ is a mapping from the inventory level $\BFy_{t-1}$ to the price $\BFp_t\in \mathcal{P}$.
 {
 A pricing policy $\pi$ is \emph{admissible} or \emph{non-anticipating} if the posted price $\BFp_t$ only depends
 on the history up to the end of period $t-1$, namely, 
 $\BFp_t$ is measurable with respect to $\mathcal{F}_{t-1}$.}
 Given the initial inventory level $\BFx_T := \BFy_0 / T$,
 the expected revenue of an admissible policy $\pi$ is denoted by
 \begin{equation}
 R^\pi(T,\BFx_T) := \E\left[\sum_{t=1}^T r_t~\bigg|~ \BFp_t = \pi_t(\BFy_{t-1}),\ \forall t=1,\cdots,T\right].
 \label{eq:rpit}
 \end{equation}
 
 \subsection{Existing Results on the Fluid Model and the Re-solving Heuristic}
 
 An optimal policy $\pi^*$ maximizing $R^\pi(T,\BFx_T)$ defined in Eq.~(\ref{eq:rpit}) can be in principle obtained via dynamic programming (DP).
{ However, it is well known that the exact DP algorithm suffers from the curse of dimensionality, as the (discretized) state space grows exponentially in size with the number of products.}
 
The seminal work of \cite{gallego1994optimal} proposed a fluid approximation model of the optimal dynamic pricing problem.
To define the fluid model, 
suppose there exists an inverse function $\BFf^{-1}$ of the demand rate function of $\BFf$.
Let $r(\BFx) := \BFx^\top \BFf^{-1}(x)$ be the mean revenue in one period given the price vector $\BFf^{-1}(\BFx)$. 
{ 
We assume $r(\BFx)$ is strictly concave and smooth on its domain $\mathcal{D}\subset \mathbb{R}^n_+$ (see Sec.~\ref{sec:main-results} and Sec.~\ref{sec:multi-product} for the statements of these assumptions).
The fluid model for the dynamic pricing problem is
\begin{align}\label{eq:fluid_opt} 
    \max_{\BFx \in \mathcal{D}} \ &\ r(\BFx) \\
    \text{s.t.} \ &\ \BFzero \leq \BFx \leq \BFx_T. \nonumber
\end{align}
Let $r^\Fluid$ and $\BFx^\Fluid$  denote the optimal value and the (unique) optimal solution of the fluid model, respectively.
The optimization problem \eqref{eq:fluid_opt} chooses a demand rate $\BFx^\Fluid$ (or equivalently, setting the price to $\BFf^{-1}(\BFx^\Fluid)$) such that the revenue function is maximized subject to the initial inventory constraint.
Intuitively, the fluid approximation model ignores the randomness caused by demand noises and replaces the stochastic inventory constraint with a deterministic constraint.

}
The following results are established by \cite{gallego1994optimal,gallego1997multiproduct}.
\begin{theorem}[\cite{gallego1994optimal,gallego1997multiproduct}]
For any admissible policy $\pi$ and the initial inventory level $\BFy_0=\BFx_T T$, the expected revenue is upper bounded by $R^\pi(T,\BFx_T)\leq T r^\Fluid$.
Furthermore, for a static pricing policy $\pi^s$ such that $\BFp_t = \BFf^{-1}(\BFx^\Fluid)\ (\forall t=1,\cdots,T)$, the expected revenue is lower bounded by $R^{\pi^s}(T,\BFx_T) \geq Tr^\Fluid - O(\sqrt T)$.
\label{thm:reopt-root}
\end{theorem}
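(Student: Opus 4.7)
My plan is to establish the two claims separately, following the classical fluid-bound argument of \cite{gallego1994optimal,gallego1997multiproduct}. I will describe the single-product intuition in detail; the multi-product case extends coordinatewise modulo the structural assumptions on $\BFf$ recorded in Section~\ref{sec:multi-product}.

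\textbf{Upper bound.} The idea is to bound $\E[r_t]$ by a per-period fluid revenue evaluated at a demand rate that is automatically fluid-feasible. Let $\hat{\BFx}_t := \E[\min(\BFd_t, \BFy_{t-1})\mid \mathcal F_{t-1}]$ denote the conditional expected fulfilled demand. Since $\BFp_t$ is $\mathcal F_{t-1}$-measurable and $\BFxi_t$ is a martingale difference,
\[
\E[r_t\mid \mathcal F_{t-1}] = \BFp_t^\top \hat{\BFx}_t, \qquad \hat{\BFx}_t \leq \E[\BFd_t\mid \mathcal F_{t-1}] = \BFf(\BFp_t).
\]
Monotonicity of the inverse demand forces the price at rate $\hat{\BFx}_t$ to dominate $\BFp_t$, giving $\BFp_t^\top \hat{\BFx}_t \leq \hat{\BFx}_t^\top \BFf^{-1}(\hat{\BFx}_t) = r(\hat{\BFx}_t)$. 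Taking expectation and applying Jensen to the concave $r$ yields $\E[r_t] \leq r(\bar{\BFx}_t)$ with $\bar{\BFx}_t := \E[\hat{\BFx}_t]$. A second Jensen gives $\sum_t \E[r_t] \leq T\, r(\bar{\BFx})$, where $\bar{\BFx}:=\tfrac{1}{T}\sum_t \bar{\BFx}_t$. Finally, the pathwise identity $\sum_t \min(\BFd_t, \BFy_{t-1}) = \BFy_0 - \BFy_T \leq \BFy_0$ forces $\bar{\BFx} \leq \BFx_T$, so $\bar{\BFx}$ is fluid-feasible and $r(\bar{\BFx}) \leq r^{\Fluid}$.

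\textbf{Lower bound.} Under $\pi^s$ the constant price $\BFp^* := \BFf^{-1}(\BFx^{\Fluid})$ produces demands $\BFd_t = \BFx^{\Fluid} + \BFxi_t$ with mean $\BFx^{\Fluid}$. When $\BFd_t \geq 0$, iterating the inventory recursion gives the pathwise identity $\sum_t \min(\BFd_t, \BFy_{t-1}) = \min(\BFy_0, \sum_t \BFd_t)$ componentwise, so the revenue shortfall is
\[
T r^{\Fluid} - R^{\pi^s}(T, \BFx_T) = (\BFp^*)^\top \E\!\left[\bigl(\textstyle\sum_t \BFd_t - \BFy_0\bigr)^+\right].
\]
Since $T\BFx^{\Fluid}\leq \BFy_0$, the positive part is dominated componentwise by $\bigl|\sum_t \BFxi_t\bigr|$, and by Cauchy--Schwarz with martingale orthogonality,
\[
\E\bigl\|\textstyle\sum_t \BFxi_t\bigr\| \leq \sqrt{\textstyle\sum_t \E\|\BFxi_t\|^2} = O(\sqrt T)
\]
under the bounded-variance assumption on the noise, giving the advertised $O(\sqrt T)$ regret.

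\textbf{Main obstacle.} The delicate point is the feasibility step in the upper bound. The naive inequality $r_t \leq \BFp_t^\top \BFd_t$ yields $\E[r_t] \leq \E[r(\BFf(\BFp_t))]$, but the averaged target rate $\tfrac{1}{T}\sum_t \E[\BFf(\BFp_t)]$ need not satisfy the fluid capacity constraint, because an aggressive policy can post prices that request more demand than can ever be served. Working with the fulfilled rate $\hat{\BFx}_t$ instead of the intended rate $\BFf(\BFp_t)$, and invoking monotonicity of $\BFf^{-1}$, restores feasibility automatically through the pathwise inventory budget $\BFy_0 - \BFy_T \leq \BFy_0$; translating this idea to the multi-product setting is exactly where the structural demand assumptions enter.
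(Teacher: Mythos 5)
The paper does not prove Theorem~\ref{thm:reopt-root}; it is quoted from \cite{gallego1994optimal,gallego1997multiproduct}, so there is no in-paper argument to compare against. Your reconstruction follows the classical fluid-bound route, and in the single-product case it is correct: the chain $\E[r_t\mid\mathcal F_{t-1}]=p_t\hat x_t\le \hat x_t f^{-1}(\hat x_t)=r(\hat x_t)$ is valid because $f^{-1}$ is decreasing (A1) and $\hat x_t\le f(p_t)$, the double application of Jensen is licensed by concavity (A2), and the pathwise budget $\sum_t\min(d_t,y_{t-1})=y_0-y_T\le y_0$ makes $\bar x$ fluid-feasible. Your diagnosis of the ``main obstacle'' is exactly right: bounding $r_t$ by $p_t d_t$ alone does not yield a feasible average rate, and passing to the fulfilled rate $\hat x_t$ is the standard fix. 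The lower-bound computation (the identity $\sum_t\min(d_t,y_{t-1})=\min(y_0,\sum_t d_t)$ for nonnegative demands, domination of the overflow by $|\sum_t\xi_t|$, and $\E|\sum_t\xi_t|=O(\sqrt T)$ by martingale orthogonality) is also correct.

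Two caveats. First, and more substantively, the multi-product upper bound is not actually closed: the step ``monotonicity of the inverse demand forces the price at rate $\hat{\BFx}_t$ to dominate $\BFp_t$'' requires that $\BFx\le\BFf(\BFp)$ componentwise imply $\BFf^{-1}(\BFx)\ge\BFp$ componentwise, i.e.\ inverse isotonicity of the demand system. This is a genuine structural restriction (it fails for general bijections $\BFf$, e.g.\ with complementary products) and is not implied by Assumptions (B1)--(B5); you flag that ``structural demand assumptions enter'' but do not state or use one, so for $n\ge 2$ the key inequality $\BFp_t^\top\hat{\BFx}_t\le r(\hat{\BFx}_t)$ is unproved. (The continuous-time argument of \cite{gallego1997multiproduct} avoids this step entirely because sales there track the intensity exactly, so $\E[\BFp^\top \ud N]=\E[r(\vct\lambda)\,\ud t]$ with no within-period overshoot; an alternative discrete-time fix is a Lagrangian relaxation of the inventory constraint.) Second, minor domain issues: $\hat x_t$ may fall below $\underline d$ or outside $\mathcal D$, so $r(\hat x_t)$ and $r(\bar{\BFx})\le r^\Fluid$ need $r$ extended concavely to $[0,\overline d]$ (harmless when $\underline d=0$), and the lower bound implicitly assumes $d_t\ge 0$ almost surely, without which the recursion identity can fail. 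None of this affects the single-product statement, which is the version the paper actually relies on.
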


However, the main drawback of the static pricing policy is that it is not adaptive to demand randomness.
Researchers have proposed various approaches to modify the static pricing policy, aiming to
improve the $O(\sqrt{T})$ gap in Theorem \ref{thm:reopt-root} (see Sec.~\ref{sec:related_work} for a survey).
One intuitive approach is to \emph{re-solve} the fluid model in every period using the current inventory level: 
\begin{align*}
    \max_{\BFx \in \mathcal{D}} \ &\ r(\BFx) \\
    \text{s.t.} \ &\ \BFzero \leq \BFx \leq \BFx_t,
\end{align*}
where $\BFx_t := \BFy_{t-1}/(T-t+1)$ is the normalized inventory level realized at the beginning of period $t$. Let the solution to the above problem be $\BFx^c_t$ (the superscript $c$ stands for ``constrained"). The price is then reset to $\BFp_t= \BFf^{-1}(\BFx^c_t)$ at the start of period $t$.
We will refer to this policy as the \emph{re-solving heuristic}.
In the work of \cite{jasin2014reoptimization}, the gap is reduced from $O(\sqrt{T})$ to $O(\ln T)$ by using the re-solving heuristic, as shown by the following result.

\begin{theorem}[\cite{jasin2014reoptimization}]
Let $\pi^r=(\pi_1^r,\cdots,\pi_T^r)$ be the re-solving heuristic policy defined as $\BFp_t= \BFf^{-1}(\BFx^c_t)$. { Assume the optimal dual variables of the fluid model \eqref{eq:fluid_opt} are strictly positive.}
The expected revenue is bounded by $R^{\pi^r}(T,\BFx_T) \geq Tr^\Fluid-O(\ln T)$, where $\BFx_T = \BFy_0 / T$.
\label{thm:reopt-log}
\end{theorem}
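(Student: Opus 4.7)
The plan is to bound the expected regret $Tr^\Fluid - R^{\pi^r}(T,\BFx_T)$ period by period, exploiting the fact that strict positivity of the fluid's dual variables forces all inventory constraints to bind, so $\BFx^\Fluid = \BFx_T$. Under this condition, I expect the re-solved solution to satisfy $\BFx_t^c = \BFx_t$ whenever the normalized inventory $\BFx_t = \BFy_{t-1}/(T-t+1)$ remains in a neighborhood of $\BFx^\Fluid$ (bounded away from the unconstrained maximizer of $r$), since the capacity constraint continues to bind after re-solving. The high-level decomposition is
\begin{equation*}
Tr^\Fluid - R^{\pi^r}(T,\BFx_T) \;=\; \sum_{t=1}^{T} \E\bigl[r(\BFx^\Fluid) - r(\BFx_t^c)\bigr] \;+\; (\text{stockout and boundary corrections}),
\end{equation*}
and the task is to show each contribution is $O(\ln T)$.

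For the main sum, I would first derive a one-step recursion for $\BFx_t$. In the regime $\BFx_t^c = \BFx_t$ the posted price is $\BFf^{-1}(\BFx_t)$, so $\BFy_t = \BFy_{t-1} - \BFx_t - \BFxi_t$ (ignoring truncation for the moment). Dividing by $T-t$ yields
\begin{equation*}
\BFx_{t+1} \;=\; \BFx_t - \frac{\BFxi_t}{T-t},
\end{equation*}
which makes $\{\BFx_t\}$ a martingale started at $\BFx^\Fluid$. A direct variance computation then gives $\E\|\BFx_t - \BFx^\Fluid\|^2 = O\!\bigl(\sum_{s<t}(T-s)^{-2}\bigr) = O\!\bigl((T-t+1)^{-1}\bigr)$.

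Next, using smoothness and strict concavity of $r$ I would Taylor-expand around $\BFx^\Fluid$:
\begin{equation*}
r(\BFx^\Fluid) - r(\BFx_t^c) \;\leq\; \nabla r(\BFx^\Fluid)^{\!\top}(\BFx^\Fluid - \BFx_t^c) + \tfrac{C}{2}\,\|\BFx_t^c - \BFx^\Fluid\|^2 .
\end{equation*}
The linear term vanishes in expectation by the martingale property of $\BFx_t$ (using $\BFx_t^c = \BFx_t$ in the interior regime), and the quadratic term contributes at most $O((T-t+1)^{-1})$ per period. Summing and using $\sum_{t=1}^{T} 1/(T-t+1) = H_T = O(\ln T)$ yields the desired bound on the main term.

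The main obstacle is controlling the boundary effects I swept aside above: (i) sample paths on which $\BFx_t$ drifts so far from $\BFx^\Fluid$ that the unconstrained maximizer of $r$ becomes feasible and thus $\BFx_t^c \neq \BFx_t$; (ii) the truncation $\min\{\BFd_t,\BFy_{t-1}\}$ in \eqref{eq:model} when realized demand exceeds remaining inventory. I would handle both by combining the variance bound $\E\|\BFx_t - \BFx^\Fluid\|^2 \lesssim 1/(T-t+1)$ with martingale concentration (Azuma--Hoeffding if $\BFxi_t$ is bounded, Freedman otherwise) to show that such atypical events occur with probability small enough that, multiplied by the bounded per-period revenue, they contribute at most $O(\ln T)$. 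The hardest technical piece is making this concentration argument uniform in $t$ near the end of horizon, where $1/(T-t)$ blows up: one must carefully choose a stopping time at which the process first leaves the good neighborhood and argue that the regret accrued after that time is itself $O(1)$, so that no spurious $\ln^2 T$ or polynomial factor appears. For the multi-product case, there is an additional geometric step of identifying the face of the fluid polytope on which $\BFx^\Fluid$ lies and verifying that the martingale recursion remains valid on that face.
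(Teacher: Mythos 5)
Your proposal is sound and follows essentially the same route the paper itself takes: the paper cites this theorem from \cite{jasin2014reoptimization} without reproving it, but its own Section~4 analysis (the martingale recursion $x_{\tau-1}^r = x_\tau^r - \xi_\tau^r/(\tau-1)$, the Taylor expansion of $r$ around $x_T$ with the linear term handled by optional stopping, the quadratic term producing the harmonic sum $\sum_\tau \E[|\bar\xi_{\to\tau}|^2] \asymp \ln T$, and the stopping time $T^\sharp$ with $\E[T^\sharp]=O(1)$ absorbing the boundary/stockout regret into an $O(1)$ term) is precisely the decomposition you describe. The only point worth tightening is that after introducing the stopping time the linear term is no longer exactly zero in expectation; it must be paired with the terminal correction $(T^\sharp-1)\bar\xi_{\to T^\sharp-1}$ via Doob's optional stopping theorem, which contributes only $O(1)$ — exactly as in Eq.~(\ref{eq:proof-main-lb-2}).
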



\subsection{Our Results: Constant Regret and Logarithmic Gaps}

\begin{figure}[t]
\centering
\includegraphics[width=0.9\textwidth]{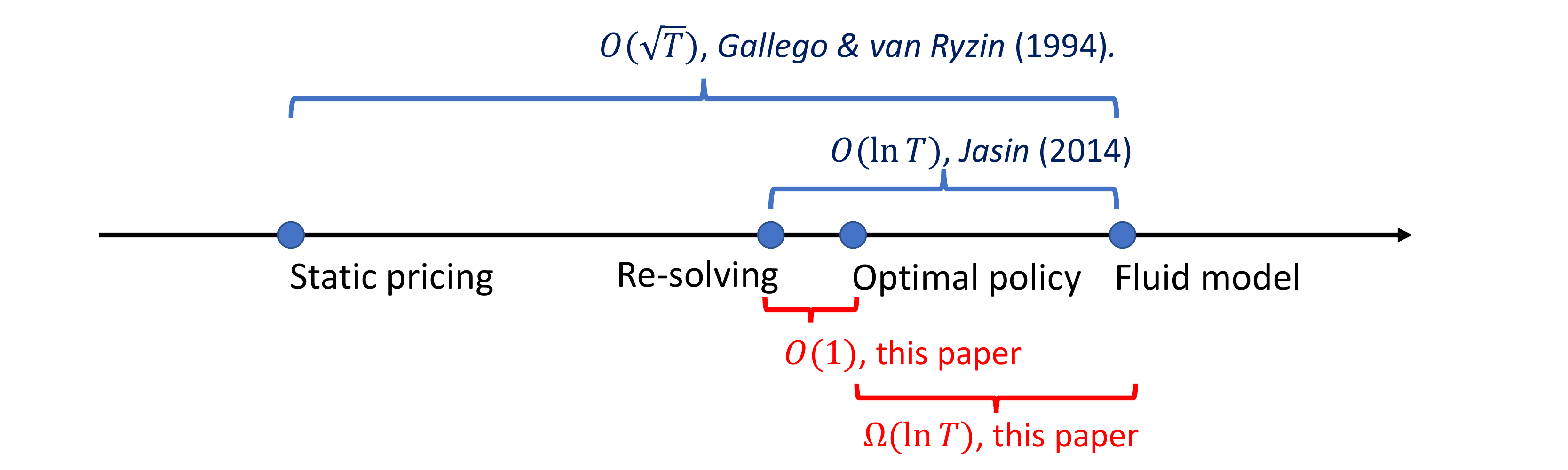}
\caption{An illustration of existing results (blue) compared to our results (red).}
\label{fig:results}
\end{figure}

In this paper we establish two main results: constant regret for the re-solving heuristic, and a logarithmic regret lower bound
on the gap between the value of the optimal pricing policy and the fluid model.
Figure \ref{fig:results} summarizes results established in this paper and compares them with existing results in the prior literature.

Our first main result, as stated in Theorems \ref{thm:main-ub} and \ref{thm:upper-bound-multi-product} later in the paper,
asserts that for any initial inventory level (except for certain boundary cases), the cumulative regret of the re-solving heuristic $\pi^r$
is upper bounded by a constant that is independent of $T$, 
compared against the expected reward of the optimal dynamic pricing policy $\pi^*$.
Apart from the obvious improvement from $O(\ln T)$ to $O(1)$ in regret bound,
our proof technique is quite different from existing work that compares the expected reward of the re-solving policy
to a certain information relaxed benchmark, such as the fluid model or the hindsight optimum benchmark.
Instead, 
we compare the value of $\pi^r$ directly with the value of the optimal DP policy $\pi^*$ by carefully analyzing the stochastic inventory levels under the two policies.

Our second main result, as stated in Theorem \ref{thm:main-lb} later in this paper, 
shows that there is an $\Omega(\ln T)$ \emph{lower bound} on the gap between the expected revenue of the re-solving heuristics $\pi^r$
and the optimal objective of the fluid model $Tr^\Fluid$.
Coupled with the $O(1)$ regret upper bound established in Theorem \ref{thm:main-ub},
this shows that there is an $\Omega(\ln T)$ lower bound on the gap between the value of the optimal policy $\pi^*$
and the fluid model as well.
This demonstrates a fundamental limitation of using the fluid model as a benchmark to analyze the regret for the price-based revenue management problem.

\section{Related Work}
\label{sec:related_work}

The idea of using simple, easy-to-compute pricing policies to approximate optimal dynamic pricing policies
originates from  \cite{gallego1994optimal,gallego1997multiproduct}, who proposed a static price policy using fluid models and established $O(\sqrt{T})$ regret.
{ Later, \cite{maglaras2006dynamic} showed that the re-solving heuristic has $o(T)$ regret in the single resource case.
}
The most relevant prior research to our paper is the work by \cite{jasin2014reoptimization},
who studied a price-based network revenue management problem and showed that re-solving heuristics attain $O(\ln T)$
asymptotic regret upper bound under mild conditions.
\cite{jasin2014reoptimization} also showed that infrequent re-solving has similar theoretical performance guarantees and is much more computationally efficient.
In this paper, we improve the regret of the re-solving heuristic to an $O(1)$ constant bound that is independent of $T$.
Our analysis is different from the one in \cite{jasin2014reoptimization} in the sense that we directly compare the expected revenue of re-solving
with the value of the optimal DP policy, instead of the fluid model.
Additionally, we complement the result in \cite{jasin2014reoptimization} by establishing an $\Omega(\ln T)$ lower bound between the expected revenue of the optimal DP policy
and the fluid model.

Re-solving heuristics have also been studied in quantity-based revenue management 
\citep{cooper2002asymptotic,reiman2008asymptotically,secomandi2008analysis,jasin2013analysis,bumpensanti2020re}.
{
The decisions involved in a quantity-based revenue management problem are the opening and closing of available products, so the feasible decisions form a discrete set.
In contrast, the price-based revenue management model studied in this paper assumes continuous prices in an infinite set. 
When the decision set is discrete, re-solving the fluid model will lead to fractional solutions that require rounding; it is shown that the design of  the rounding procedures (e.g.\ by randomization, thresholding, etc.) plays a critical role in the performance of re-solving heuristics \citep{arlotto2019uniformly,bumpensanti2020re,vera2020bayesian}.
When the decision set is continuous, re-solving algorithms must precisely track the updated inventory level in the fluid model without using any rounding.
As such, analysis for the (continuous) price-based revenue management problem is much different from the previous proofs for quantity-based revenue management.
Moreover, existing proofs of constant regret in the quantity-based model often use an information relaxation bound called \emph{hindsight-optimum} benchmark (see Appendix for a detailed discussion). However, the hindsight-optimum benchmark does not readily translate to the price-based revenue management setting, and our analysis does not rely on such a benchmark.
}
It is worth mentioning that \cite{vera2019online} proposed a constant regret algorithm that works for both quantity-based and price-based revenue management problems.
{
However, the price-based revenue management model considered in
\cite{vera2019online} assumes a single product and a finite set of candidate prices.
Because a finite set of prices implies that the feasible decision set has a discrete structure, the pricing setting studied in \cite{vera2019online} is significantly different from this paper.
In fact, \cite{maglaras2006dynamic} showed that a single-resource revenue management problem with discrete prices is equivalent to a quantity-based revenue management problem.
}

In this paper, we restrict our attention to stationary demand. Several other papers studied revenue management problems with non-stationary, time-correlated, or arbitrary demand sequences \citep[e.g.][]{chen2013simple,ma2020approximation,ma2020algorithms}. Revenue management problems with non-stationary demand are harder and therefore these papers consider different performance metrics such as approximation ratios or competitive ratios rather than regret.
\cite{chen2013simple} studied a dynamic pricing problem under a general non-stationary demand setting. They showed that re-solving heuristics can achieve constant competitive ratios, whereas  static pricing policies have asymptotically diminishing competitive ratios.
Due to different model assumptions, the results in \cite{chen2013simple} are not directly comparable to ours.

Another stream of related literature studies \emph{dynamic pricing with demand learning},
where the underlying demand function is unknown and needs to be learned on the fly from sales data.
{ Many papers consider demand learning settings using the price-based finite-inventory revenue management model from \cite{gallego1994optimal} as the ground truth model}
\citep[e.g.,][]{aviv2005dynamic,besbes2009dynamic,besbes2012blind,wang2014close,lei2014near,den2015dynamic,ferreira2018online}. { Re-solving heuristics have also been applied to demand learning algorithms \citep{jasin2015performance,ferreira2018online}.}
In contrast, our paper assumes that the retailer has {full information} about the demand curve and demand distributions, so we are able to show $O(1)$ regret, which is tighter than typical regret bounds in the learning literature. Also,
the lower bound result in this paper (Theorem~\ref{thm:main-lb}) is proved using different techniques
from lower bound proofs in the learning setting \citep[e.g.][]{broder2012dynamic,wang2019multi}, as the latter relies on information-theoretical lower bounds.

\section{Main Results}
\label{sec:main-results}

{ In this section, we consider a price-based revenue management problem with a single product. The extension to multiple products is deferred to Sec.~\ref{sec:multi-product}. Although some of the results in this section are special cases of those in Sec.~\ref{sec:multi-product}, presenting the single-product model helps illustrate the key insight of our analysis.}
We make the following assumptions on the single-product demand model.
\begin{enumerate}
\item[A1.] {(\bf Monotonicity)} The demand rate function $f:[\underline{p}, \overline{p}]\to \mathbb{R}_+$ is strictly decreasing with $f(\underline{p})=\overline{d}$, $f(\overline{p})=\underline{d}$. (We allow the case $\overline{p}=+\infty$ or $\underline{d}=0$.)
Note that this implies the existence of the inverse function $f^{-1}$ on $[\underline{d},\overline{d}]$.
\item[A2.] {(\bf Strict Concavity)} The expected revenue $r(d)=df^{-1}(d)$ as a function of the demand rate $d$ is strictly concave. There exists a positive constant $m>0$ such that $r''(d)\leq -m$ for all $d\in[\underline d,\overline d]$.
The maximizer of $r(d)$ is in the interior of the domain, i.e., $\arg\max_{d} r(d) \in (\underline d,\overline d)$.
\item[A3.] {(\bf Smoothness)} The third derivative of $r(d)$ exists and satisfies $|r'''(d)|\leq M$ for all $d\in [\underline d,\overline d]$.
Furthermore, there exists a constant $C>0$ such that $|r(d)-r({d}')|\leq C|d-{d}'|$ for all $d,{d}'\in[\underline d,\overline d]$.
{
\item[A4.] {(\bf Martingale Difference Sequence)}
Conditional on the price $p_t$ ($\forall t = 1,\ldots,T$), the demand noise $\xi_t$ is independent of $\{\xi_{1},\ldots,\xi_{t-1}\}$ and satisfies 
$\E[\xi_t \mid \mathcal{F}_{t-1}] = \E[\xi_t \mid p_t] =0 \ a.s.$ 
The conditional distribution of $\xi_t$ given $p_t$ is denoted by $\xi_t\sim Q(p_t)$. In addition, $|\xi_t|\leq B_\xi \ a.s.$ for some constant $B_\xi<\infty$.
\item[A5.] {\bf (Wasserstein Distance)} There exists a constant $L>0$ such that for any $p,p'\in[\underline{p}, \overline{p}]$, it holds that $\mathcal W_2(Q(p),Q(p'))\leq L|f(p)-f(p')|$,
where $\mathcal W_2(Q,Q'):= \inf_{\Xi}\sqrt{\E_{\xi,\xi'\sim\Xi}[|\xi-\xi'|^2]}$ is the $L_2$-Wasserstein distance
between $Q,Q'$, with $\Xi$ being an arbitrary joint distribution with marginal distributions being $Q$ and $Q'$, respectively.
}
\end{enumerate}
{ 
Assumptions (A1)--(A3) are standard assumptions for the price-based revenue management problem \citep{gallego1994optimal,jasin2014reoptimization}. In particular, the strict concavity of $r(d)$ stems from the economic principle of diminishing marginal returns.
Assumption (A4) allows demand noise to have general dependence on the price.
Assumption (A5) states that if two prices are close to each other, then the demand noises given these prices should also have similar distributions. This assumption is satisfied when the demand distribution is modeled by a parametric family of distributions (e.g., Bernoulli, binomial, truncated normal) whose parameters depend continuously on price.}

\subsection{Constant Regret of the Re-solving Heuristic}


Let $r^{u} := \max_{x\geq 0}r(x)$
and 
$x^{u} := \arg\max_{x\geq 0}r(x)$ be the unconstrained optimal revenue rate and its maximizer, respectively. 
Because $r(x)$ is strictly concave, in the case of a single product, it is easily verified that the unique optimal solution to the fluid model is equal to $x^\Fluid = \min\{x_T, x^{u}\}$ and hence the fluid optimal price is $f^{-1}(\min\{x_T, x^{u}\})$.

When the normalized initial inventory level $x_T$ exceeds the (unconstrained) optimal demand rate $x^{u}$,
it is easy to verify that both the static policy $\pi^s: p_t\equiv f^{-1}(x^{u})$ and the re-solving heuristic $\pi^r$ have constant regret.
\begin{proposition}
Suppose $x_T > x^{u}$. Let $\pi^s: p_t\equiv f^{-1}(x^{u})$ be the static pricing policy.
Then $R^{\pi^s}(T,x_T) \geq Tr(x^{u}) - O(1) \geq R^{\pi^*}(T, x_T) - O(1)$. { In addition, the re-solving heuristic satisfies $R^{\pi^r}(T,x_T) \geq Tr(x^{u}) - O(1) \geq R^{\pi^*}(T, x_T) - O(1)$.}
\label{prop:static-lp-const}
\end{proposition}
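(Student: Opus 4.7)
Since $x_T > x^u$, the fluid program~\eqref{eq:fluid_opt} attains its unconstrained maximum in the interior of the feasible set, so $x^\Fluid = x^u$ and $r^\Fluid = r(x^u)$. Theorem~\ref{thm:reopt-root} then yields $R^{\pi^*}(T,x_T) \leq Tr^\Fluid = Tr(x^u)$, so the proposition reduces to proving the two lower bounds $R^{\pi^s}(T,x_T),\,R^{\pi^r}(T,x_T) \geq Tr(x^u) - O(1)$.

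For the static policy, set $p^u := f^{-1}(x^u)$ and $\Delta := x_T - x^u > 0$. Because $\pi^s$ charges the constant price $p^u$, total sales equal $\min\{y_0, D_T\}$, where $D_T = Tx^u + S_T$ and $S_T := \sum_{t=1}^T \xi_t$. Using the identity $\min\{a,b\} = a - (a-b)^+$ together with $\E[S_T] = 0$ from (A4), I would write
\[ \E[R^{\pi^s}(T,x_T)] = Tr(x^u) - p^u\,\E[(S_T - T\Delta)^+]. \]
The plan is then to apply Azuma--Hoeffding to the bounded martingale differences $\{\xi_t\}$ from (A4) to bound $\P(S_T > T\Delta + s) \leq \exp\bigl(-(T\Delta+s)^2/(2TB_\xi^2)\bigr)$ for $s \geq 0$, and integrate over $s$ to conclude that $\E[(S_T - T\Delta)^+] = e^{-\Omega(T)}$, which is certainly $O(1)$.

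For the re-solving heuristic, I would introduce the good event $\mathcal{G} := \{\max_{1 \leq t \leq T} S_t \leq T\Delta\}$ and argue by induction on $t$ that on $\mathcal{G}$, $\pi^r$ posts $p_t = p^u$ at every period. The induction step: if $p_s = p^u$ for every $s \leq t$, then $y_t = Tx_T - tx^u - S_t$, so $x_{t+1} = y_t/(T-t) = x^u + (T\Delta - S_t)/(T-t) \geq x^u$ on $\mathcal{G}$, and the re-solved fluid program at time $t+1$ returns $x^c_{t+1} = x^u$, hence $p_{t+1} = p^u$. This yields a pathwise coupling of $\pi^r$ with $\pi^s$ on $\mathcal{G}$. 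Azuma's maximal inequality gives $\P(\mathcal{G}^c) \leq \exp\bigl(-T\Delta^2/(2B_\xi^2)\bigr)$, and since per-period revenue is bounded by a constant (so total revenue is $O(T)$), the expected revenue gap contributed by $\mathcal{G}^c$ is at most $O(T)e^{-\Omega(T)} = o(1)$. Combining with the previous paragraph yields $R^{\pi^r}(T,x_T) \geq Tr(x^u) - O(1)$. The main technical content is precisely this inductive coupling on $\mathcal{G}$; everything else is a routine tail estimate driven by the strictly positive slack $\Delta = x_T - x^u > 0$ assumed in the proposition.
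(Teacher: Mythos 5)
Your argument is correct and follows essentially the same route as the paper's: both hinge on the event that the running sum of demand noises never exceeds $T(x_T-x^u)$, on which either policy posts $f^{-1}(x^u)$ in every period, and both control the complementary event with a maximal inequality for the bounded martingale $\sum_\tau \xi_\tau$. The only cosmetic differences are that you invoke Azuma--Hoeffding (exponential tails) where the paper uses Doob's $L^2$ maximal inequality (which already suffices for an $O(1)$ loss), and that you spell out the inductive coupling of $\pi^r$ with the static price, a step the paper asserts without detail.
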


\begin{proof}{Proof of Proposition \ref{prop:static-lp-const}.}
{ 
For any $1\leq t \leq T$, 
let $x_t = y_{t-1}/t$ be the normalized inventory level in period $t$. The static pricing policy commits to the price $f^{-1}(x^{u})$.
If $x_t > x^{u}$, the re-solving heuristic also selects the price $f^{-1}(x^{u})$. 
Let $\pi$ denote either the static pricing policy $\pi^s$ or the re-solving heuristic $\pi^r$. 
Let $\mathcal{E}_t = \{\sup_{1\leq t' \leq t-1} \sum_{\tau=1}^{t'} \xi_\tau \leq T(x_T - x^{u})\}$ be the event that the inventory level never drops below $x_u$ from the start to the beginning of period $t$. 
Then, we have
\begin{align*}
R^{\pi}(T,x_T) & \geq \E\left[\sum_{t=1}^T (r(x^{u})+\xi_t)\BFone\{\mathcal{E}_t\}\right] = \E\left[\sum_{t=1}^T r(x^{u})\BFone\{\mathcal{E}_t\}\right]  = r(x^{u})\left(T -\sum_{t=1}^T \Pr[\mathcal E_t^c]\right),
\end{align*}
where the first equality uses the fact that $\{\xi_t\}$ is a martingale difference sequence and $\mathcal{E}_t \in \mathcal{F}_{t-1}$.
By Doob's martingale inequality, for any $1\leq t \leq T$, we have
\[
    \Pr[\mathcal E_t^c] = \Pr\left[\sup_{1\leq t' \leq t-1} \sum_{\tau=1}^{t'} \xi_\tau > T(x_T - x^{u})\right] \leq \frac{\mathbb \sum_{\tau=1}^t \E[\xi_\tau^2]}{T^2(x_T - x^{u})^2} \leq \frac{t B_{\xi}^2}{T^2(x_T - x^{u})^2}.
\]
Thus, $R^{\pi}(T,x_T) \geq Tr(x^{u}) - (B_{\xi}^2)/(x_T - x^{u})^2 = Tr(x^{u}) - O(1)$. The proof is complete by noting that $Tr(x^{u}) \geq R^{\pi^*}(T,x_T)$ using Theorem~\ref{thm:reopt-root}. $\square$
}
\end{proof}

However, analysis for the limited inventory case $(x_T < x^{u})$ is much more complicated.
The static price policy $\pi^s \equiv f^{-1}(x_T)$ typically suffers $\Omega(\sqrt{T})$ regret in this case.
The work by \cite{jasin2014reoptimization} established that the regret of the re-solving heuristic $\pi^r$ when measured against the fluid benchmark $Tr(x_T)$ is at most $O(\ln T)$. { (Note that the ``positive dual variable'' assumption in \cite{jasin2014reoptimization} (see Theorem~\ref{thm:reopt-log}) is equivalent to the condition $x_T < x^{u}$ in the single product case.)}
Our next theorem improves the regret of $\pi^r$ to a constant.
{
\begin{theorem}
Suppose $x_T\in (\underline d, x^{u})$. Let $\pi^r$ be the re-solving heuristic and $\pi^*$ be the optimal policy. For $T\geq2$,
it holds that $R^{\pi^r}(T,x_T) \geq R^{\pi^*}(T, x_T) - O(1)$.
\label{thm:main-ub}
\end{theorem}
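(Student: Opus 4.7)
The plan is to identify a common ``refined fluid'' benchmark
\[
  T r(x_T) - c_0 \ln T, \qquad c_0 \;:=\; \tfrac{1}{2}\bigl|r''(x_T)\bigr|\,\sigma^2(x_T), \qquad \sigma^2(x_T) := \E\bigl[\xi_1^2 \bigm| p_1 = f^{-1}(x_T)\bigr],
\]
and to show that both $R^{\pi^r}(T,x_T)$ and $R^{\pi^*}(T,x_T)$ lie within $O(1)$ of it. Subtracting then yields $R^{\pi^r}(T,x_T) \ge R^{\pi^*}(T,x_T) - O(1)$, which is the theorem.

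First, I would sharpen the $O(\ln T)$ analysis of \cite{jasin2014reoptimization} to $R^{\pi^r}(T,x_T) \ge T r(x_T) - c_0 \ln T - O(1)$. Let $x_t^r := y_{t-1}^r/(T-t+1)$. While $x_t^r \in (\underline d, x^u)$ the dynamics collapse to the linear recursion $x_{t+1}^r = x_t^r - \xi_t/(T-t)$, so $\{x_t^r\}$ is a martingale started at $x_T$ with $\Var(x_t^r) \le C \sum_{s<t}(T-s)^{-2}$. A third-order Taylor expansion of $r$ around $x_T$, using (A2)--(A3), gives
\[
  \E\,r(x_t^r) \;=\; r(x_T) \;-\; \tfrac{1}{2}\bigl|r''(x_T)\bigr|\,\Var(x_t^r) \;+\; O\bigl(\E|x_t^r - x_T|^3\bigr),
\]
and summing over $t$ telescopes the variance terms into exactly $c_0 \ln T + O(1)$. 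The probability that $x_t^r$ exits $(\underline d, x^u)$, together with the truncations in~\eqref{eq:model}, contributes only $O(1)$, controlled by Azuma--Hoeffding on $\sum_s \xi_s/(T-s+1)$ using the boundedness of $\xi_t$ from (A4).

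Second, and most critically, I would establish the matching upper bound $R^{\pi^*}(T,x_T) \le T r(x_T) - c_0 \ln T + O(1)$ by a backward-induction asymptotic expansion of the DP value function $V_t(y)$. My ansatz is
\[
  V_t(y) \;=\; (T-t+1)\,r\!\bigl(\tfrac{y}{T-t+1}\bigr) \;-\; c_0 \ln(T-t+1) \;+\; E_t(y),
\]
with $E_t(y) = O(1)$ uniformly over $y/(T-t+1)$ in each compact subinterval of $(\underline d, x^u)$. Substituting into the Bellman equation $V_t(y) = \max_p \E[p\min\{d,y\} + V_{t+1}(\max\{0, y-d\})]$, Taylor-expanding $V_{t+1}$ to third order in $\xi$ and in $(y-f(p))$, and invoking the first-order condition for $p$ (with (A5) ensuring continuity in $p$ of the expectation over $\xi$), I would match the $1/(T-t+1)$ term and pin down \emph{exactly the same coefficient} $c_0$ as on the re-solving side. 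The residuals $E_t$ then satisfy a contractive backward recursion with forcing $O((T-t+1)^{-2})$, which telescopes to $O(1)$.

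The main obstacle is the rigorous control of this value-function expansion: the ansatz is valid only in the interior $(\underline d, x^u)$, whereas a priori the optimal policy could drive the normalized inventory toward the boundary. I plan to handle this by (i) applying Azuma--Hoeffding to the bounded-noise martingale (A4) to show that, under either $\pi^*$ or $\pi^r$, the probability of $y_{t-1}/(T-t+1)$ leaving a fixed interior window of $(\underline d, x^u)$ decays geometrically; and (ii) using (A3) and (A5) to bound the Taylor remainders and the price-dependence of $\xi$ uniformly over admissible prices. The truncations $\min\{d_t, y_{t-1}\}$ and $\max\{0, y_{t-1} - d_t\}$ can be absorbed into the $O(1)$ residual since, for $x_T < x^u$, inventory exhaustion has exponentially small probability.
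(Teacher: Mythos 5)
Your strategy is genuinely different from the paper's: you route both policies through a refined benchmark $Tr(x_T)-c_0\ln T$ with a matched constant $c_0=\tfrac12|r''(x_T)|\sigma^2(x_T)$, whereas the paper never identifies the logarithmic coefficient at all --- it compares $\pi^r$ and $\pi^*$ directly, writing both inventory processes as $x_T$ minus harmonic sums of noises (plus the corrections $\Delta_\tau$ for $\pi^*$), coupling the two noise sequences via the Wasserstein assumption (A5), and showing that after Taylor expansion around $x_T$ the first-order terms cancel by optional stopping while the strong-concavity penalty $-\tfrac m2|\Delta_\tau-\bar\Delta_{\to\tau}+\cdots|^2$ absorbs everything else. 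Your step toward $R^{\pi^r}\ge Tr(x_T)-c_0\ln T-O(1)$ is a plausible sharpening of the paper's Theorem~\ref{thm:main-lb} machinery (the martingale structure $x^r_\tau=x_T-\bar\xi_{\to\tau}$, the fourth-moment control of the exit time, and the $W_2$-continuity of the noise variance all cooperate), and if both halves of your plan went through you would indeed recover the theorem.

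The genuine gap is in the second half, the upper bound $R^{\pi^*}\le Tr(x_T)-c_0\ln T+O(1)$ via the value-function ansatz. Three specific problems. First, the recursion for $E_t$ is not contractive: it has the form $E_t(y)=\E[E_{t+1}(y')]+\text{forcing}$, an averaging (martingale-type) recursion, and to even produce the forcing term at order $(T-t+1)^{-2}$ you must Taylor-expand $E_{t+1}$ itself in $\xi$ and in $\Delta$, which requires uniform $C^2$ or $C^3$ bounds on $E_{t+1}$. Nothing in your argument establishes regularity of the residual, and propagating derivative bounds backward through the $\max_p$ operator is precisely the hard part of any such expansion; asserting it does not close the induction. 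Second, the validity region of the ansatz: you invoke Azuma--Hoeffding to keep $y_{t-1}/(T-t+1)$ in a fixed interior window ``under either $\pi^*$ or $\pi^r$,'' but under $\pi^*$ the normalized inventory is \emph{not} a martingale --- it carries the drift $\bar\Delta_{\to\tau}$ from the optimal corrections, and a priori the optimal policy could steer the state toward the boundary where the expansion fails. (The paper avoids this by never assuming the $\Delta_\tau$ are small pointwise; it only controls their aggregate effect through the concavity penalty and the stopping time $T^\sharp$.) Third, the coefficient matching is the crux and is only argued heuristically: the natural coefficient emerging from the Bellman expansion is $\tfrac12|r''(x)|\sigma^2(f^{-1}(x))$ evaluated at the \emph{current} ratio $x=y/(T-t+1)$, not at $x_T$, and the price-dependence of the noise law means the optimizer can in principle trade revenue for variance; you need to show both effects contribute only $O(1)$ in aggregate. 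Until the backward induction for $E_t$ is made rigorous, the claim that the optimal DP value sits at exactly $Tr(x_T)-c_0\ln T+O(1)$ remains unproved, and the theorem does not follow.
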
 
}
Theorem \ref{thm:main-ub} is the main result of this section and its proof is given in Sec.~\ref{sec:proofs}. 
Unlike the previous results by \cite{gallego1994optimal}
and \cite{jasin2014reoptimization}, Theorem \ref{thm:main-ub} compares the expected revenue of $\pi^r$
directly with the optimal DP pricing policy $\pi^*$,
rather than comparing it with the fluid approximation value $Tr(x_T)$. This allows for tighter regret bounds.
In fact, it is impossible to obtain $O(1)$ regret using the fluid model as a benchmark,
as we shall establish in the next subsection.

{ In light of Proposition~\ref{prop:static-lp-const} and Theorem~\ref{thm:main-ub}, we know that the re-solving heuristic $\pi^r$ has constant regret either in the sufficient inventory case ($x_T > x^u$) or in the limited inventory case ($x_T < x^u$).  The only remaining scenario is the boundary case ($x_T = x^u$). We will investigate this scenario using numerical experiments in Sec.~\ref{sec:numerical_results}. Our numerical result indicates that $\pi^r$ does \emph{not} have constant regret in the boundary case. This observation is analogous to the situation for the quantity-based revenue management problem, where the re-solving heuristic has constant regret when the fluid model (a linear program) has non-degenerate solutions but does not have constant regret in certain boundary cases when the fluid model has degenerate solutions  \citep{jasin2012re,bumpensanti2020re}.
}

\subsection{Logarithmic Gap of the Fluid Model Benchmark}

In this section, we show that in the limited inventory case ($x_T < x^{u}$),
the regret of the re-solving policy $\pi^r$ measured against the fluid approximation value $Tr(x_T)$
can be at least $\Omega(\ln T)$.
\begin{theorem}
Suppose $x_T\in(\underline d,x^{u})$ and let $\pi^r$ be the re-solving policy defined in Theorem \ref{thm:reopt-log}.
{ Suppose there exists $\sigma>0$ such that $\E_{\xi\sim Q(p)}[\xi^2]\geq \sigma^2$ for any $p\in[\underline{p}, \overline{p}]$.}
For sufficiently large $T$, it holds that $R^{\pi^r}(T, x_T) \leq Tr(x_T) - \Omega(\ln T)$.
\label{thm:main-lb}
\end{theorem}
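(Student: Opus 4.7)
The plan is to lower-bound the revenue loss $T r(x_T) - R^{\pi^r}(T,x_T)$ by the expected cumulative squared deviation of the inventory trajectory from $x_T$, and then to show that this sum is $\Omega(\ln T)$ by coupling the re-solving dynamics to a free martingale whose cumulative variance is $\Omega(\ln T)$.

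\textbf{Step 1: Taylor reduction.} Let $x^c_t := \min(x_t, x^u)$. Because $r_t \leq p_t d_t = r(x^c_t) + p_t\xi_t$ with $\E[p_t\xi_t\mid\mathcal F_{t-1}] = 0$ by (A4), we have $R^{\pi^r}(T, x_T) \leq \E[\sum_t r(x^c_t)]$. Assumption (A2), $r''\leq -m$, gives the global quadratic estimate
$$r(x) \leq r(x_T) + r'(x_T)(x - x_T) - \tfrac{m}{2}(x - x_T)^2, \qquad x\in[\underline d, x^u].$$
The flow-conservation identity $\sum_t d_t = T x_T - y_T + U$ (with $U:=\sum_t(d_t - y_{t-1})^+$ the total unmet demand), combined with $\E[\sum_t d_t] = \E[\sum_t x^c_t]$, gives $\E[\sum_t(x^c_t - x_T)] = \E[U] - \E[y_T]$. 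Both are $O(1)$ under re-solving: the trajectory $\{x_t\}$ stays concentrated around $x_T$ because in the interior it follows a martingale with $\Var(x_t)\leq B_\xi^2 \pi^2/6$, so stockouts and leftover inventory each contribute only $O(1)$ in expectation. Hence
$$T r(x_T) - R^{\pi^r}(T, x_T) \;\geq\; \tfrac{m}{2}\,\E\Big[\sum_{t=1}^T (x^c_t - x_T)^2\Big] - O(1).$$

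\textbf{Step 2: Coupling and harmonic lower bound.} Introduce $\tilde M_t := -\sum_{s=1}^{t-1}\xi_s/(T-s)$ and $\tilde x_t := x_T + \tilde M_t$. The re-solving recursion yields $x_t = \tilde x_t$ (and $x^c_t = x_t$) as long as $x_s\in(\underline d, x^u)$ for all $s\leq t-1$. Set $\delta_0 := \min(x_T - \underline d, x^u - x_T) > 0$, $K_0 := c_0\ln T$ with $c_0$ a large constant, and $\mathcal G := \{\sup_{s\leq T-K_0}|\tilde M_s|\leq \delta_0\}$. Azuma's inequality on $\tilde M_t$ (increments bounded by $B_\xi/(T-s)$) combined with Doob's maximal inequality gives $\P(\mathcal G^c) \leq 2\exp(-\delta_0^2 K_0/(2B_\xi^2)) = O(T^{-3})$ for $c_0$ sufficiently large. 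Using the assumed variance lower bound $\E[\xi_s^2]\geq\sigma^2$, a harmonic-sum computation shows
$$\sum_{t=1}^{T-K_0}\E[\tilde M_t^2] \;\geq\; \sigma^2\sum_{k=K_0+1}^{T-1}\frac{k - K_0}{k^2} \;\geq\; \sigma^2\ln(T/K_0) - O(1) \;=\; \Omega(\ln T).$$
On $\mathcal G$, $(x^c_t - x_T)^2 = \tilde M_t^2$ for $t\leq T - K_0$, and the deterministic bound $|\tilde M_t|\leq B_\xi\ln T$ yields $\E[\sum_t\tilde M_t^2\mathbf 1_{\mathcal G^c}] \leq B_\xi^2(\ln T)^2\cdot T\cdot \P(\mathcal G^c) = o(1)$. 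Therefore $\E[\sum_t(x^c_t - x_T)^2] = \Omega(\ln T)$, and substituting into Step 1 completes the proof.

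\textbf{Main obstacle.} The terminal regime $t > T - K_0$ is the delicate part: $\Var(\tilde M_t)$ approaches a constant as $t\to T$, so the coupling $x_t = \tilde x_t$ fails there with non-negligible probability and the linear Taylor term $r'(x_T)(x^c_t - x_T)$ has ambiguous sign. The resolution is to truncate at $K_0 = \Theta(\ln T)$, where Azuma's bound forces the coupling-failure probability to decay polynomially in $T$; the harmonic contribution $\sum_k(k - K_0)/k^2 \asymp \ln(T/K_0)$ still captures the full $\Omega(\ln T)$ lower bound, and the terminal periods contribute only a bounded error absorbed by the flow-conservation overhead.
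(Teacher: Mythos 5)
Your proposal rests on the same core mechanism as the paper's proof: a second-order expansion of $r$ around $x_T$ in which the quadratic term contributes $\tfrac{m}{2}\sum_t \E[(x_t-x_T)^2]$, and the observation that the re-solving dynamics make $x_t - x_T$ a harmonic-weighted martingale whose cumulative variance is $\sum_s \E[\xi_s^2]/(T-s) \geq \sigma^2 \ln T$. Where you differ is in the scaffolding. The paper kills the linear term exactly by optional stopping at a stopping time $T^\sharp$ with $\E[T^\sharp]=O(1)$ (the first backward time the harmonic noise sum leaves a fixed window), and charges all post-$T^\sharp$ periods to that $O(1)$ expectation; you instead truncate at the deterministic time $T-K_0$ with $K_0=\Theta(\ln T)$, control the coupling failure by Azuma plus a maximal inequality, and dispose of the linear term via the inventory-accounting identity $\E[\sum_t(x^c_t-x_T)]=\E[U]-\E[y_T]$. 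Both routes work; the stopping-time version is what lets the paper reuse the same machinery in the upper-bound proof, while your deterministic truncation is arguably more elementary and makes the $\sigma^2\ln T$ constant transparent.

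The one genuine gap is that $\E[U]=O(1)$ (and, less critically, $\E[y_T]=O(1)$) is asserted rather than proved, and it is doing real work: since $x_T<x^u$ we have $r'(x_T)>0$, so the linear term contributes $-r'(x_T)(\E[U]-\E[y_T])$ to your lower bound, and an unbounded $\E[U]$ would swallow the $\Omega(\ln T)$ gain. The justification you give --- ``$\Var(x_t)\leq B_\xi^2\pi^2/6$ so stockouts contribute $O(1)$'' --- is not sufficient, because a single early depletion event (say at period $T/2$) forces $U=\Theta(T)$ when $\underline d>0$, so you need the \emph{exponential} tail $\Pr[\inf_{s\leq t}\tilde M_s<-x_T]\leq \exp(-c\,(T-t))$ from Azuma, summed against the $O(T-t)$ unmet demand each such event can generate, not merely a second-moment bound. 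The same concentration estimate is also needed to make sense of $r(x^c_t)$ when $x_t$ falls below $\underline d$ (the fluid re-solve is then infeasible and the policy's demand rate must be defined by clipping to $[\underline d,\overline d]$ before the Taylor bound applies). These are fixable with the Azuma machinery you already invoke in Step 2, but as written the argument for the linear term is incomplete.
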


Theorem \ref{thm:main-lb} is proved in Sec.~\ref{sec:proofs}.
It implies that the logarithmic regret by \cite{jasin2014reoptimization} (see Theorem~\ref{thm:reopt-log} above), which uses the fluid model benchmark, is tight and cannot be improved.
Because $R^{\pi^*}(T,x_T) - R^{\pi^r}(T,x_T) = O(1)$ by
Theorem~\ref{thm:main-ub}, 
Theorem \ref{thm:main-lb} also 
shows that there is an logarithmic gap $\Omega(\ln T)$ between the value of the optimal DP pricing policy
and the fluid approximation value. This is why our constant regret analysis does not use the fluid model as the benchmark.

Prior work on the quantity-based revenue management problem \citep{reiman2008asymptotically,
bumpensanti2020re,vera2019online} also considered different regret benchmarks that are tighter than the fluid model,
including various versions of the ``hindsight optimum'' benchmark. The ``hindsight optimum'' model assumes a clairvoyant who knows the \emph{aggregate} realized demands for the entire horizon at the start.
In the Appendix of this paper, we show that one version of the hindsight optimum benchmark proposed by 
\cite{vera2019online}
has $O(1)$ regret when measured against
the fluid approximation benchmark. By Theorem~\ref{thm:main-lb}, this hindsight optimum benchmark is also $\Omega(\ln T)$ away from the value of the optimal DP policy.

\section{Proofs}
\label{sec:proofs}

Before presenting our proof, we first define some notations.
{ For convenience, throughout this section, time indices are counted backwards: time index $t$ occurs when there are $t$ periods until the end of the selling horizon.}
Let $\phi_t^*(x)=R^{\pi^*}(t,x)$ and $\phi_t^r(x)=R^{\pi^r}(t,x)$ be the expected cumulative revenue of the 
optimal DP pricing policy $\pi^*$ and the re-solving policy $\pi^r$, respectively, when there are $t$ remaining periods and $xt$ units of remaining inventory.
For $1\leq \tau \leq T$, let $x_\tau^*$ and $x_\tau^r$ be the normalized inventory levels
under policy $\pi^*$ and $\pi^r$, when there are $\tau$ time periods remaining.
These notations are summarized in Table \ref{tab:notation}, with some additional notations being defined later in the proof.

\begin{table}[t]
\centering
\caption{Notations used in the proof.}
\scalebox{0.9}{
\begin{tabular}{l|l|l}
\hline
\bf Notation& \bf Definition& \bf Meaning\\
\hline
$x^{u}$& $x^{u}=\arg\max_{x\in[\underline d,\overline d]}r(x)$& the optimal demand rate without inventory constraints\\
$\phi_t^*(x)$& $\phi_t^*(x)=R^{\pi^*}(t, x)$ & reward of $\pi^*$ with $t$ periods and $xt$ inventory \\
$\phi_t^r(x)$& $\phi_t^r(x)=R^{\pi^r}(t, x)$ & reward of re-solving with $t$ periods and $xt$ inventory \\
$\mathcal F_t$& $\sigma$-algebra of $\{\xi_\tau\}_{\tau\geq t}$& all the events known at the end of period $t$\\
$x_\tau^*,x_\tau^r$& remaining inventory divide by $\tau$& normalized inventory levels under policy $\pi^*$ and $\pi^r$\\
$\Delta_\tau$& see Eq.~(\ref{eq:defn-dp})& the optimal demand correction with $\tau$ periods remaining\\
$\bar\Delta_{\to t}$& $\frac{\Delta_T}{T-1}+\frac{\Delta_{T-1}}{T-2}+\cdots+\frac{\Delta_{t+1}}{t}$& harmonic series of demand corrections up to $t$ \\
$\xi_\tau^*,\xi_\tau^r$& $\xi_\tau^* \sim Q(f^{-1}(x_\tau^* + \Delta_{\tau}))$, $\xi_\tau^r \sim Q(f^{-1}(x_\tau^r))$ & the stochastic demand noises at time $\tau$ under $\pi^*,\pi^r$  \\
$\bar\xi_{\to t}^*,\bar\xi_{\to t}^r$& $\frac{\xi_T}{T-1}+\frac{\xi_{T-1}}{T-2}+\cdots+\frac{\xi_{t+1}}{t}$&demand noises up to $t$,
under $\pi^*$ and $\pi^r$ \\
$\bar\xi_{\to t}^\delta$& $\bar\xi_{\to t}^r-\bar\xi_{\to t}^*$& difference in harmonic demand noise series\\
$\Xi_\tau$& joint distribution over $(\xi_\tau^*,\xi_\tau^r)$&
the joint distribution that minimizes $
\sqrt{\E_{\Xi_\tau}[|\xi_\tau^*-\xi_\tau^r|^2]}$\\
$T^\sharp$& see Eq.~(\ref{eq:defn-Tsharp})& stopping time such that $\{x_\tau^r\}_{\tau\leq T^\sharp}$ is well-behaved\\
\hline
\end{tabular}
}
\label{tab:notation}
\end{table}

The rest of this section is organized as follows. 
In the first subsection we establish some properties of the optimal policy $\pi^*$ and the re-solving policy $\pi^r$.
More specifically, we establish upper and lower bounds of the expected rewards $\phi_t^*(\cdot),\phi_t^r(\cdot)$
using the key quantities of $\{\bar\Delta_{\to t}\}$ (harmonic series of optimal demand corrections),
$\{\bar\xi_{\to t}^*,\bar\xi_{\to t}^r\}$ (harmonic series of stochastic noise variables) and $T^\sharp$ (a stopping time until which
the demand noise process is well-behaved).
We then proceed with the proofs of Theorems \ref{thm:main-ub}, \ref{thm:main-lb} by carefully analyzing the differences in the Taylor expansions of $\phi_t^*(\cdot),\phi_t^r(\cdot)$.

\subsection{Properties of the Optimal Policy and the
Re-solving Heuristic}

For any $\tau\geq 1$, let the random variable $x_\tau^*$ be the normalized inventory level (i.e., remaining inventory divided by remaining time) at period $\tau$ under policy $\pi^*$.
{
Let $y_\tau=x_\tau^*\tau$ and $y_{\tau-1}=x_{\tau-1}^*(\tau-1)$ be the actual inventory levels when $\tau$ and $(\tau-1)$ time periods are remaining. 
If the policy selects the demand rate $x^*_{\tau} + \Delta \in [\underline d,\overline d]$, we have $y_{\tau-1}=y_{\tau}-(x_\tau^*+\Delta+\xi_\tau)$, which implies $x_{\tau-1}^* = y_{\tau-1}/(\tau-1) = x_{\tau}^* - (\Delta+\xi_\tau)/(\tau-1)$.
Therefore, the value of the optimal policy $\pi^*$ is given by the following Bellman equation:
\[
\phi_\tau^*(x_\tau^*) = \max_{\Delta \in[\underline d - x_\tau^*,\overline d- x_\tau^*]} r(x_\tau^*+\Delta) + \E_{\xi_{\tau}\sim Q(f^{-1}(x_\tau^*+\Delta))} \left[\phi_{\tau-1}^*\bigg(x_\tau^* - \frac{\Delta+\xi_\tau}{\tau-1}\bigg) \right].
\]
Let $\Delta_{\tau}$ denote the maximizer of the above equation.
Let $\xi^*_{\tau}$ be the realized demand noise, which is drawn from the distribution
$Q(f^{-1}(x_\tau^* + \Delta_{\tau}))$ (by Assumption (A4)).}
The normalized inventory level in the next period is $x_{\tau-1}^* = x_\tau^* - (\Delta_\tau+\xi_\tau^*)/(\tau-1)$. 
The Bellman equation can be rewritten as
\begin{equation}
\phi_\tau^*(x_\tau^*) = r(x_\tau^* + \Delta_\tau) + \E\left[\phi_{\tau-1}^*\left(x_\tau^* - \frac{\Delta_\tau+\xi_\tau^*}{\tau-1}\right)\right]\qquad \forall\ 2\leq \tau \leq T,\ x^*_\tau \geq 0.
\label{eq:defn-dp}
\end{equation}
{ The above equation implies $\underline d - x_\tau^* \leq  \Delta_\tau \leq \overline d - x_\tau^*$, but we remark that it does not required $x_\tau^*$ to be in the domain $[\underline{d}, \overline{d}]$.}

For any $t<T$, let $\bar\Delta_{\to t} := \frac{\Delta_T}{T-1}+\frac{\Delta_{T-1}}{T-2}+\cdots+\frac{\Delta_{t+1}}{t}$
and $\bar\xi_{\to t}^* := \frac{\xi_T}{T-1}+\frac{\xi_{T-1}^*}{T-2}+\cdots+\frac{\xi_{t+1}^*}{t}$ be the harmonic series of demand noises up to time $t$
under the optimal policy.
For $t=T$, we also define $\bar\Delta_{\to t}=\bar\xi^*_{\to t}=0$.
It then holds that
\begin{equation}
x_\tau^* = x_T - \bar\Delta_{\to\tau} - \bar\xi_{\to\tau}^*, \qquad \forall\ 1\leq\tau\leq T.
\label{eq:defn-xtau-star}
\end{equation}

Next, we consider the re-solving heuristic $\pi^r$.
For any $\tau\geq 1$, let the random variable $x_\tau^r$ be the normalized inventory level under the re-solving heuristic.
If $x_\tau^r\in [\underline d, x^{u}]$, the re-solving heuristic selects the price $f^{-1}(x_\tau^r)$. { Let $\xi^r_\tau$ be the realized demand noise under this price, which is drawn from the distribution
$Q(f^{-1}(x_\tau^r))$.
The normalized inventory level in the next period is equal to $x_{\tau-1}^r = (x_{\tau}\tau - x_{\tau}-\xi^r_{\tau})/(\tau-1)=x^r_{\tau}
- \xi^r_{\tau}/(\tau-1)$.
}
Thus, the value of the re-solving policy $\pi^r$ can be written as
\begin{equation}
\phi_t^r(x_\tau^r) = r(x_\tau^r) + \E\bigg[\phi_{t-1}^r\bigg(x_\tau^r - \frac{\xi_\tau^r}{\tau-1}\bigg)\bigg]\qquad
\forall\ 2\leq \tau \leq T,\ \underline{d} \leq x^r_\tau \leq x^u.
\label{eq:defn-resolving}
\end{equation}
Note that Eq.~(\ref{eq:defn-resolving}) does \emph{not} hold for $x_\tau^r>x^{u}$, in which case the re-solving policy $\pi^r$
would commit to the unconstrained optimal demand rate $x^{u}$ instead of $x_\tau^r$.
{This motivates the definition of a certain stopping time $T^\sharp$ in Eq.~(\ref{eq:defn-Tsharp}) below, which ensures that Eq.~(\ref{eq:defn-resolving}) holds for all $\tau\geq T^\sharp$.
}
Comparing Eq.~(\ref{eq:defn-resolving}) with Eq.~(\ref{eq:defn-dp}), we remark that the re-solving heuristics $\pi^r$ can be viewed a special case
of the dynamic programming policy with the decision rule restricted to $\Delta_\tau\equiv 0$ for all $x_\tau^r\in [\underline d, x^{u}]$.

Define a time index $T^\sharp$ as
{
\begin{equation}
    T^\sharp := \max \left\{\tau \geq 1:\;\; \big|\bar\xi_{\to{\tau-1}}^r\big| > \min\left(x_T-\underline d,x^{u}-x_T,-\frac{r'(x_T)}{r''(x_T)}\right) \right\} \vee 2,
    \label{eq:defn-Tsharp}
\end{equation}
where $\bar\xi_{\to\tau}^r = \frac{\xi_T^r}{T-1}+\cdots+\frac{\xi_{\tau+1}^r}{\tau}$ is the harmonic series of demand noises
under the re-solving policy.
Note that $\bar\xi_{\to{\tau-1}}^r$ is a function of $\{\xi^r_t\}_{t=\tau}^T$ and measurable with respect to $\mathcal{F}_{\tau}$, so $T^{\sharp}$ is indeed a stopping time.
}
Because of the bound $|\bar\xi_{\to \tau}|\leq \min\{x_T-\underline d,x^{u}-x_T\}$ holds for all $\tau \geq T^\sharp -1$, we have
\begin{equation}
x_\tau^r = x_T - \bar\xi_{\to\tau}^r, \quad \forall \tau\geq T^\sharp - 1.
\label{eq:defn-xtau-r}
\end{equation}
Intuitively, $T^\sharp$ is the first time that the inventory level in the next period $x^r_{\tau-1}$ falls outside of the interval $[\underline{d},x^{u}]$.
In other words, the inventory levels of the re-solving heuristic $x^r_{\tau}$ satisfies Eq.~\eqref{eq:defn-resolving} up to time $T^\sharp$.
{ The additional term $-r'(x_T)/r''(x_T)$ in Eq.~\eqref{eq:defn-Tsharp} is needed for technical reasons in the proof (this term is equal to $x^u - x_T$ when $r(d)$ is a quadratic function).}


{
\begin{lemma} \label{lem:T-sharp-ub}
For $x_T\in(\underline{d},x^{u})$ and any $T\geq 1$, it holds that 
\begin{equation}
    \E[T^\sharp]\leq 2+\frac{4B_\xi^4}{\min\{x_T-\underline d,x^{u}-x_T,-r'(x_T)/r''(x_T)\}^4}.
\label{eq:Tsharp-upper-bound}
\end{equation}
\end{lemma}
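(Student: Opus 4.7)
The plan is to bound $\E[T^\sharp]$ by summing tail probabilities and applying Doob's maximal inequality to a backward‐time martingale built out of the noise sequence. Let $K_0 := \min\{x_T - \underline{d},\ x^u - x_T,\ -r'(x_T)/r''(x_T)\}$ and set $M_\tau := \bar\xi_{\to \tau-1}^r = \sum_{s=\tau}^T \xi_s^r/(s-1)$ for $2 \leq \tau \leq T$. Since $T^\sharp \geq 2$ almost surely, I would start from the identity
\begin{equation*}
\E[T^\sharp] \;=\; 2 + \sum_{\tau=3}^T \P(T^\sharp \geq \tau),
\end{equation*}
and observe that $\{T^\sharp \geq \tau\} = \{\max_{\tau \leq \tau' \leq T} |M_{\tau'}| > K_0\}$ for each $\tau \geq 3$, by the very definition of $T^\sharp$.

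The key structural observation is that $\{M_\tau\}_{\tau = T, T-1, \ldots, 2}$, read in decreasing order of $\tau$ (i.e., forward in real time), is a martingale with respect to the nested filtration $\mathcal{F}_T \subset \mathcal{F}_{T-1} \subset \cdots \subset \mathcal{F}_2$: the increment $M_{\tau-1} - M_\tau = \xi_{\tau-1}^r/(\tau-2)$ is conditionally mean-zero given $\mathcal{F}_\tau$ by Assumption~A4. Since $|M|^4$ is then a non-negative submartingale in the same direction, Doob's maximal inequality applied to the real-time range that ends at $M_\tau$ gives
\begin{equation*}
\P(T^\sharp \geq \tau) \;\leq\; \frac{\E[M_\tau^4]}{K_0^4}, \qquad \tau \geq 3.
\end{equation*}

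I would then bound $\E[M_\tau^4]$ by expanding and invoking the martingale-difference property to kill every monomial in which some summation index appears an odd number of times; what remains is the diagonal $\sum_s \E[Y_s^4]$ plus the pair term $6\sum_{s<s'} \E[Y_s^2 Y_{s'}^2]$, where $Y_s := \xi_s^r/(s-1)$. Using $|\xi_s^r|\leq B_\xi$ almost surely and the telescoping estimate $\sum_{k \geq \tau-1} k^{-2} \leq 1/(\tau-2)$, this collapses to
\begin{equation*}
\E[M_\tau^4] \;\leq\; 3\left(\sum_{s=\tau}^T \frac{B_\xi^2}{(s-1)^2}\right)^{\!2} \;\leq\; \frac{3 B_\xi^4}{(\tau-2)^2}.
\end{equation*}
Summing the resulting tail bounds over $\tau \geq 3$ produces $\sum_{\tau \geq 3} \P(T^\sharp \geq \tau) \leq 3 B_\xi^4 K_0^{-4} \sum_{j \geq 1} j^{-2}$, and tightening the last tail estimate (e.g., $\sum_{k \geq n} k^{-2} \leq 1/(n-\tfrac{1}{2})$) absorbs the numerical constant into $4 B_\xi^4/K_0^4$, yielding \eqref{eq:Tsharp-upper-bound}.

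The main obstacle is the choice of moment. A second-moment Chebyshev bound would only give $\P(T^\sharp \geq \tau) \lesssim 1/(\tau-2)$, whose tail sum diverges logarithmically and cannot yield a constant. Moving up to the fourth moment is exactly what turns the tail into a summable $1/(\tau-2)^2$ and produces the claimed $B_\xi^4/K^4$ scaling; the martingale-difference structure of $\{\xi_s^r\}$ is the enabling ingredient, since it kills the odd cross-terms in $\E[M_\tau^4]$ and yields the sharp rate rather than the much weaker $(\log T)^2/(\tau-2)^2$ that a deterministic worst-case estimate of $|M_\tau|$ would give.
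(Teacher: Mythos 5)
Your proof follows essentially the same route as the paper's: write $\E[T^\sharp]$ as a sum of tail probabilities, identify $\{T^\sharp\ge\tau\}$ with a maximal event for the martingale $\bar\xi^r_{\to\cdot}$ run forward in real time, apply Doob's maximal inequality at the fourth moment, and bound $\E[|\bar\xi^r_{\to\tau-1}|^4]$ by $O(B_\xi^4/\tau^2)$ so that the tail sum converges to a constant. The only caveat --- one shared by the paper's own write-up --- is that the fourth-moment expansion is slightly imprecise: monomials such as $\E[Y_j^3 Y_k]$ in which the \emph{latest} index appears an odd number of times greater than one are not annihilated by the martingale-difference property alone, but they are controlled by the same $B_\xi^4/((j-1)^2(k-1)^2)$ quantities and only affect the absolute constant.
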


\begin{proof}{Proof of Lemma \ref{lem:T-sharp-ub}.}
Let $\gamma:=\min\{x_T-\underline d,x^{u}-x_T,-r'(x_T)/r''(x_T)\}$.
By the definition from Eq.~\eqref{eq:defn-Tsharp}, for $\tau > 2$, we have
\[
    \Pr[T^\sharp \geq \tau] = \Pr\left[\sup_{\tau-1\leq t \leq T} |\bar{\xi}^r_{\to t}| > \gamma \right]
    \leq \frac{\E[|\bar{\xi}^r_{\to\tau-1}|^4]}{\gamma^4},
\]
which follows from Doob's martingale inequality by noting that $\{\xi_{\tau}/\tau\}$ is a martingale difference sequence (see Assumption (A4)).
Recall that $\bar\xi_{\to t}^r = \frac{\xi_T^r}{T-1}+\cdots+\frac{\xi_{t+1}^r}{t}$ with $|\xi^r_t|\leq B_{\xi}\ a.s.$
For $\tau >2$, it is easily verified that
\begin{align*}
\E[|\bar\xi_{\to {\tau - 1}}^r|^4]
= \sum_{\tau \leq j,k \leq T}\frac{\E[|\xi_j^r|^2|\xi_k^r|^2]}{(j-1)^2(k-1)^2}
\leq  \left(\sum_{\tau \leq j \leq T}\frac{B_\xi^2}{(j-1)^2}\right)\left(\sum_{\tau \leq k \leq T}\frac{B_\xi^2}{(k-1)^2}\right) \leq \frac{4B_\xi^4}{(\tau-1)^2}.
\end{align*}
Using the equality $\E[T^\sharp] =\sum_{\tau=1}^T\Pr[T^\sharp \geq \tau]$, we have
\[
    \E[T^\sharp]\leq \sum_{\tau=1}^T \frac{\E[|\bar{\xi}^r_{\to\tau-1}|^4]}{\gamma^4} \leq 2 + \sum_{\tau=3}^T \frac{4B_\xi^4}{\gamma^4(\tau-1)^2}
    \leq 2 + \frac{4B_{\xi}^4}{\gamma^4}. 
\]
$\square$
\end{proof}
}

\begin{lemma}
Let $x_T\in(\underline{d},x^{u})$.
Let $T^\sharp$ be the stopping time defined in Eq.~\eqref{eq:defn-Tsharp}.
Then, it holds that 
\begin{align}
\E\left[\sum_{\tau=T^\sharp}^T r(x_\tau^*+\Delta_\tau) \right] &\leq  \phi_T^*(x_T) \leq  \E\left[\sum_{\tau=T^\sharp}^T r(x_\tau^*+\Delta_\tau) + (T^\sharp-1)r(\min\{x_{T^\sharp-1}^*,x^{u}\}) 
\right], \nonumber\\
\E\left[\sum_{\tau=T^\sharp}^T r(x_\tau^r) \right]  &\leq  \phi_T^r(x_T)  \leq \E\left[\sum_{\tau=T^\sharp}^T r(x_\tau^r) + 
(T^\sharp-1)r(\min\{x_{T^\sharp-1}^r,x^{u}\})\right].
\label{eq:ft-r-expansion}
\end{align}
\label{lem:ft-star-expansion}
\end{lemma}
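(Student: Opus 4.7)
The plan is to derive both double inequalities by iterating the Bellman recursions \eqref{eq:defn-dp} and \eqref{eq:defn-resolving} from $\tau=T$ down to $\tau=T^\sharp$, and then controlling the resulting residual value functions $\phi_{T^\sharp-1}^*(x_{T^\sharp-1}^*)$ and $\phi_{T^\sharp-1}^r(x_{T^\sharp-1}^r)$ pointwise: from above by Theorem~\ref{thm:reopt-root} and from below by the trivial non-negativity of expected revenue.

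For the $\pi^*$ identity I would introduce the one-step increment
\[
g_\tau \;:=\; r(x_\tau^* + \Delta_\tau) \;+\; \phi_{\tau-1}^*(x_{\tau-1}^*) \;-\; \phi_\tau^*(x_\tau^*), \qquad \tau=2,\dots,T,
\]
which by \eqref{eq:defn-dp} satisfies $\E[g_\tau\mid \mathcal F_{\tau+1}]=0$. Because $\bar\xi^r_{\to s-1}$ depends only on $\{\xi^r_j\}_{j\ge s}$, one reads off from \eqref{eq:defn-Tsharp} that $\{T^\sharp \ge \tau+1\}\in \mathcal F_{\tau+1}$, so the indicator $\mathbf 1\{T^\sharp \le \tau\}$ is $\mathcal F_{\tau+1}$-measurable and can be pulled inside the conditional expectation, giving $\E\bigl[\sum_{\tau=T^\sharp}^T g_\tau\bigr]=0$. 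Telescoping the $\phi_{\tau-1}^*-\phi_\tau^*$ differences then rearranges to the key identity
\[
\phi_T^*(x_T)\;=\;\E\!\left[\sum_{\tau=T^\sharp}^T r(x_\tau^*+\Delta_\tau)\,+\,\phi_{T^\sharp-1}^*(x_{T^\sharp-1}^*)\right].
\]
The analogous identity $\phi_T^r(x_T)=\E\bigl[\sum_{\tau=T^\sharp}^T r(x_\tau^r)+\phi_{T^\sharp-1}^r(x_{T^\sharp-1}^r)\bigr]$ follows the same way from \eqref{eq:defn-resolving}; here the one extra point to check is that \eqref{eq:defn-resolving} is only valid when $x_\tau^r\in[\underline d,x^{u}]$, but this is exactly the containment that \eqref{eq:defn-Tsharp} together with \eqref{eq:defn-xtau-r} guarantees for every $\tau\ge T^\sharp-1$, so the telescoping down to $T^\sharp$ is justified.

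With both identities in hand the lemma collapses to the pointwise sandwich $0\le \phi_{T^\sharp-1}^*(x_{T^\sharp-1}^*)\le (T^\sharp-1)\,r(\min\{x_{T^\sharp-1}^*,x^{u}\})$ (and the analogous bound with $r$ replaced by $r$ and the $\ast$ replaced by $r$). The lower bound is trivial since the retailer can always abstain from selling; the upper bound is Theorem~\ref{thm:reopt-root} applied $\omega$-wise to the subproblem with $T^\sharp-1$ periods remaining and (random) initial normalized inventory $x_{T^\sharp-1}^*$, after noting that in the single-product setting $r^\Fluid(x)=r(\min\{x,x^{u}\})$.

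The main obstacle is the measurability bookkeeping behind the optional-sampling step: $T^\sharp$ is defined through the re-solving noise sequence $\{\xi^r_\tau\}$, while the increments $g_\tau$ for $\pi^*$ live on $\{\xi^*_\tau\}$, so one has to work on the joint probability space carrying both sequences under the Wasserstein couplings $\{\Xi_\tau\}$ of Table~\ref{tab:notation} and verify that $T^\sharp$ is a stopping time of the joint filtration. Once this is set up, both identities follow from the same martingale-difference argument, and the remaining two-sided bound on the residual value function is immediate.
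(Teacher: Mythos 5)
Your proposal is correct and follows essentially the same route as the paper: stop the Bellman recursion at $T^\sharp$, kill the noise contribution by optional stopping with respect to the backward filtration (the paper does this on the realized-revenue terms $f^{-1}(x_\tau^*+\Delta_\tau)\xi_\tau^*$, you do it on the equivalent value-process increments $g_\tau$), and then sandwich the residual value $\phi_{T^\sharp-1}$ between $0$ and $(T^\sharp-1)\,r(\min\{x_{T^\sharp-1},x^u\})$ via Theorem~\ref{thm:reopt-root}. Your remark about verifying that $T^\sharp$ remains a stopping time on the joint space carrying both noise sequences under the couplings $\Xi_\tau$ is a legitimate point that the paper leaves implicit, and your resolution of it is right.
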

\begin{proof}{Proof of Lemma \ref{lem:ft-star-expansion}.}
For the optimal DP policy,
the revenue collected in periods $T,T-1,\cdots,T^\sharp$ is $\sum_{\tau=T^\sharp}^T[r(x_\tau^*+\Delta_\tau) + f^{-1}(x_\tau^*+\Delta_\tau)\xi_\tau^*]$.
At the beginning of period $T^\sharp-1$, the remaining inventory level is $x_{T^\sharp-1}^* (T^\sharp-1)$.
Therefore, 
\begin{align*}
\phi_T^*(x_T)
&= \E\left[\sum_{\tau=T^\sharp}^T[r(x_\tau^*+\Delta_\tau) + f^{-1}(x_\tau^*+\Delta_\tau)\xi_\tau^*] + \phi_{T^\sharp-1}^*(x_{T^\sharp-1}^*)\right]\\
&=   \E\left[\sum_{\tau=T^\sharp}^T r(x_\tau^*+\Delta_\tau) +  \phi_{T^\sharp-1}^*(x_{T^\sharp-1}^*) \right],
\end{align*}
where the second equality holds because 
$\E[\sum_{\tau=T^\sharp}^T f^{-1}(x_\tau^*+\Delta_\tau)\xi_\tau^*]=0$
by applying Doob's optional stopping theorem.
Similarly, for the re-solving policy, 
because the revenue collected in periods $T,T-1,\cdots,T^\sharp$ is $\sum_{\tau=T^\sharp}^T[r(x_\tau^r) + f^{-1}(x_\tau^r)\xi_\tau^r]$, we have
\begin{align*}
\phi_T^r(x_T^r) &= \E\left[\sum_{\tau=T^\sharp}^T [r(x_\tau^r)+f^{-1}(x_\tau^r)\xi_\tau^r] + \phi_{T^\sharp-1}^r(x_{T^\sharp-1}^r)\right]\\
&= \E\left[\sum_{\tau=T^\sharp}^T r(x_\tau^r) + \phi_{T^\sharp-1}^r(x_{T^\sharp-1}^r)\right]. 
\end{align*}
To complete the proof, note that for any admissible policy $\pi$, we have $0\leq \phi^{\pi}_{T^\sharp-1}(x_{T^\sharp-1})
\leq (T^\sharp-1)r(x^c_{T^\sharp-1})
=(T^\sharp-1)r(\min\{x_{T^\sharp-1},x^{u}\})$ by Theorem~\ref{thm:reopt-root}.
 $\square$
\end{proof}

\subsection{Proof of Theorem \ref{thm:main-ub}}

In this section we prove Theorem \ref{thm:main-ub}. 
{By Lemma~\ref{lem:ft-star-expansion},
for any $x_T\in(\underline d,x^{u})$, it holds that
\begin{align}
\phi_T^*(x_T) & \leq \textstyle  \E\left[\sum_{\tau=T^\sharp}^T r(x_\tau^*+\Delta_\tau) + (T^\sharp-1)r(\min\{x^*_{T^\sharp-1},x^{u}\})\right], \label{eq:exp-1}\\
\phi_T^r(x_T) & \geq \textstyle \E\left[\sum_{\tau=T^\sharp}^T r(x_\tau^r) \right].
\label{eq:exp-2}
\end{align}
}
Recall that 
$T^\sharp$ is the stopping time defined in Eq.~(\ref{eq:defn-Tsharp}).
For any $\tau \geq T^\sharp-1$, 
we have $x^*_\tau = x_T - \bar\Delta_{\to\tau} - \bar \xi^*_{\to\tau}$ by Eq.~\eqref{eq:defn-xtau-star}, $x^r_\tau = x_T - \bar\xi^r_{\to\tau}$ by Eq.~\eqref{eq:defn-xtau-r}.
For notational simplicity, define $\bar\xi_{\to\tau}^\delta := \bar\xi_{\to\tau}^r-\bar\xi_{\to\tau}^*$.
So,
{ \begin{align}
r(x_\tau^*+\Delta_\tau) - r(x_\tau^r)
& \leq r'(x_T-\bar\xi_{\to\tau}^r)[\Delta_\tau-\bar\Delta_{\to\tau}+\bar\xi_{\to\tau}^\delta] - \frac{m}{2}|\Delta_\tau-\bar\Delta_{\to\tau}+\bar\xi_{\to\tau}^\delta|^2\nonumber\\
&\leq r'(x_T)[\Delta_\tau-\bar\Delta_{\to\tau}+\bar\xi_{\to\tau}^\delta] - r''(x_T)\bar\xi_{\to\tau}^r[\Delta_\tau-\bar\Delta_{\to\tau}+\bar\xi_{\to\tau}^\delta] \nonumber\\
&\;\;\;\; + \frac{M}{2}|\bar\xi_{\to\tau}^r|^2|\Delta_\tau-\bar\Delta_{\to\tau}+\bar\xi_{\to\tau}^\delta| - \frac{m}{2}|\Delta_\tau-\bar\Delta_{\to\tau}+\bar\xi_{\to\tau}^\delta|^2, 
\label{eq:proof-main-ub-1}
\end{align}
where the first inequality uses the strict concavity condition (A2) and the second inequality uses the smoothness condition (A3) of $r(d)$.

Define $\xi_\tau^\delta := \xi_\tau^r-\xi_\tau^*$. Also, $\bar\xi_{\to\tau}^\delta$ is measurable with respect to $\mathcal{F}_{\tau+1}$ (note: $\mathcal{F}_{\tau+1} \subset \mathcal{F}_{\tau}$ as time are indexed backwards).
By Doob's optional stopping theorem, $\E[\sum_{\tau=T^\sharp}^T \xi_{\tau}^\delta] = 0$ 
and $\E[ \sum_{\tau=T^\sharp}^T \bar\xi_{\to\tau}^r\xi_\tau^\delta] = 0$.
Taking expectations on both sides of Eq.~(\ref{eq:proof-main-ub-1})
and summing over $\tau = T,T-1,\cdots,T^\sharp$, we have
\begin{align}
\E\bigg[\sum_{\tau=T^\sharp}^T \bigl( r(x_\tau^*+\Delta_\tau)-r(x_\tau^r)\bigr)\bigg]
\leq &\  \E\bigg[ \sum_{\tau=T^\sharp}^T
\bigg(r'(x_T)[\Delta_\tau-\bar\Delta_{\to\tau}+\bar\xi_{\to\tau}^\delta-\xi_{\tau}^\delta] - r''(x_T)\bar\xi_{\to\tau}^r[\Delta_\tau-\bar\Delta_{\to\tau}]\nonumber \nonumber \\
+r''(x_T)\bar\xi_{\to\tau}^r [\xi_{\tau}^\delta-\bar\xi_{\to\tau}^\delta]&
+ \frac{M}{2}|\bar\xi_{\to\tau}^r|^2|\Delta_\tau-\bar\Delta_{\to\tau}+\bar\xi_{\to\tau}^\delta| - \frac{m}{2}|\Delta_\tau-\bar\Delta_{\to\tau}+\bar\xi_{\to\tau}^\delta|^2
\bigg)\bigg].
\label{eq:proof-main-ub-2}
\end{align}
Define $\eta^{*} = -(x^*_{T^\sharp -1}-x^{u})^+$, $\eta^{r} = (r'(x_T)/r''(x_T) - \bar\xi^{r}_{\to T^\sharp -1})^+$ and
$\eta = \eta^{*} + \eta^{r}$.
By applying Eq.~\eqref{eq:proof-main-ub-1} to period $\tau = T^\sharp -1$, we have
\begin{align}
&\ r(\min\{x_{T^\sharp -1}^*,x^u\})
-r(x_{T^\sharp -1}^r-\eta^r) 
= r(x_{T^\sharp -1}^*+\eta^{*})
-r(x_{T^\sharp -1}^r-\eta^r)\nonumber\\
\leq &\  r'(x_T)[\eta-\bar\Delta_{\to T^\sharp -1}+\bar\xi_{\to T^\sharp -1}^\delta] - r''(x_T)(\bar\xi_{\to T^\sharp -1}^r+\eta^r)[\eta-\bar\Delta_{\to T^\sharp -1}+\bar\xi^{\delta}_{\to T^\sharp -1}] \nonumber\\
& + \frac{M}{2}|\bar\xi_{\to T^\sharp -1}^r+\eta^r|^2| \eta-\bar\Delta_{\to T^\sharp -1}+\bar\xi_{\to T^\sharp -1}^\delta| - \frac{m}{2}|\eta-\bar\Delta_{\to T^\sharp -1}+\bar\xi_{\to T^\sharp -1}^\delta|^2 \nonumber\\
\leq &\  r'(x_T)[\eta-\bar\Delta_{\to T^\sharp -1}+\bar\xi_{\to T^\sharp -1}^\delta] - r''(x_T)(\bar\xi_{\to T^\sharp -1}^r+\eta^r)[\eta-\bar\Delta_{\to T^\sharp -1}+\bar\xi^{\delta}_{\to T^\sharp -1}] + \frac{M^2}{8m}|\bar\xi_{\to T^\sharp-1}^r|^4\nonumber\\
& - \frac{M^2}{8m}|\bar\xi_{\to T^\sharp-1}^r|^4+ \frac{M}{2}|\bar\xi_{\to T^\sharp -1}^r|^2| \eta-\bar\Delta_{\to T^\sharp -1}+\bar\xi_{\to T^\sharp -1}^\delta| - \frac{m}{2}|\eta-\bar\Delta_{\to T^\sharp -1}+\bar\xi_{\to T^\sharp -1}^\delta|^2 \nonumber\\
\leq &\  r'(x_T)[\eta-\bar\Delta_{\to T^\sharp -1}+\bar\xi_{\to T^\sharp -1}^\delta] - r''(x_T)(\bar\xi_{\to T^\sharp -1}^r+\eta^r)[\eta-\bar\Delta_{\to T^\sharp -1}+\bar\xi^{\delta}_{\to T^\sharp -1}] + \frac{M^2}{8m}|\bar\xi_{\to T^\sharp-1}^r|^4,
\label{eq:proof-at-T-sharp}
\end{align}
where the second inequality holds because $|\bar\xi_{\to T^\sharp -1}^r+\eta^r|\leq |\bar\xi_{\to T^\sharp -1}^r|$ by definition and the last inequality follows by completing the square, because $- \frac{M^2}{8m}|\bar\xi_{\to T^\sharp-1}^r|^4+ \frac{M}{2}|\bar\xi_{\to T^\sharp -1}^r|^2| \eta-\bar\Delta_{\to T^\sharp -1}+\bar\xi_{\to T^\sharp -1}^\delta| - \frac{m}{2}|\eta-\bar\Delta_{\to T^\sharp -1}+\bar\xi_{\to T^\sharp -1}^\delta|^2 = -\frac{m}{2}(|\eta-\bar\Delta_{\to T^\sharp-1}+\bar\xi_{\to T^\sharp-1}^\delta| - \frac{M}{2m}|\bar\xi_{\to T^\sharp-1}^r|^2)^2 \leq 0$ almost surely.

Subtracting \eqref{eq:exp-2} from \eqref{eq:exp-1}, we obtain
\begin{align}
& \quad \phi_T^*(x_T) - \phi_T^r(x_T) \nonumber \\
& \leq
\E\left[
\sum_{\tau=T^\sharp}^T \bigl(r(x^*_\tau+\Delta_\tau) - r(x^r_{\tau})\bigr) + (T^\sharp-1)r(\min\{x^*_{T^\sharp-1},x^{u}\})\right] \nonumber \\
& \leq
\E\left[
\sum_{\tau=T^\sharp}^T \bigl(r(x^*_\tau+\Delta_\tau) - r(x^r_{\tau})\bigr) + (T^\sharp-1)\bigl(r(x^*_{T^\sharp-1}+\eta^*)-r(x^r_{T^\sharp-1}-\eta^r)\bigr)\right] + \E\left[(T^\sharp-1)r(x^u)\right] \nonumber \\
& \leq \E\big[r'(x_T)\mathcal A - r''(x_T)\mathcal B + r''(x_T)\mathcal C + \mathcal D \big] + \E\left[(T^\sharp-1)r(x^{u})\right],
\label{eq:proof-main-ub-3}
\end{align}
where the second inequality holds because $x_{T^\sharp-1}+\eta^*=\min\{x_{T^\sharp-1}^*,x^u\}$ by definition, and $x^u$ is the maximizer of $r(\cdot)$.
The last inequality uses \eqref{eq:proof-main-ub-2} \eqref{eq:proof-at-T-sharp}
and the terms $\mathcal A,\mathcal B,\mathcal C,\mathcal D$ are defined as
\begin{align*}
\mathcal A \ &=\ \textstyle\sum_{\tau=T^{\sharp}}^T [\Delta_\tau-\bar\Delta_{\to\tau}+\bar\xi_{\to\tau}^\delta-\xi_{\tau}^\delta] - (T^\sharp-1)(\bar\Delta_{\to T^\sharp-1} - \bar\xi_{\to T^\sharp -1}^\delta) + (T^\sharp-1)\eta, \\
\mathcal B \ &=\ \textstyle\sum_{\tau=T^{\sharp}}^T\bar\xi_{\to\tau}^r[\Delta_\tau-\bar\Delta_{\to\tau}]
- (T^\sharp-1) \bar\xi_{\to T^\sharp-1}^r \bar\Delta_{\to T^\sharp-1} + (T^\sharp-1)(\bar\xi_{\to T^\sharp-1}^r+\eta^r)\eta, \\
\mathcal C \ &=\ \textstyle\sum_{\tau=T^{\sharp}}^T\bar\xi_{\to\tau}^r[\xi_\tau^\delta - \bar\xi_{\to\tau}^\delta]
-(T^\sharp-1) (\bar\xi_{\to T^\sharp-1}^r+\eta^r) \bar\xi_{\to T^\sharp-1}^\delta, \\
\mathcal D \ &=\ \textstyle\sum_{\tau=T^{\sharp}}^T \left(\frac{M}{2}|\bar\xi_{\to\tau}^r|^2|\Delta_\tau-\bar\Delta_{\to\tau}+\bar\xi_{\to\tau}^\delta| - \frac{m}{2}|\Delta_\tau-\bar\Delta_{\to\tau}+\bar\xi_{\to\tau}^\delta|^2\right)  
 + (T^\sharp-1) \frac{M^2}{8m}|\bar\xi_{\to T^\sharp-1}^r|^4.
\end{align*}
}
Next, we analyze the four terms $\mathcal A,\mathcal B,\mathcal C,\mathcal D$ separately.
Recall the definition  $\bar\Delta_{\to\tau}=\frac{\Delta_{T}}{T-1}+\cdots+\frac{\Delta_{\tau+1}}{\tau}$, $\overline\xi_{\to\tau}^\delta = \frac{\xi_T^\delta}{T-1}+\cdots+\frac{\xi_{\tau+1}^\delta}{\tau}$.
For the term $\mathcal A$, with elementary algebra, it can be verified that
{
\begin{equation}
 \textstyle\sum_{\tau=T^{\sharp}}^T [\Delta_\tau-\bar\Delta_{\to\tau}+\bar\xi_{\to\tau}^\delta-\xi_{\tau}^\delta] - (T^\sharp-1)(\bar\Delta_{\to T^\sharp-1} - \bar\xi_{\to T^\sharp -1}^\delta) = 0,
\label{eq:A-final}
\end{equation}
which implies that $\E[\mathcal A]= \E[(T^\sharp-1)\eta].$
}

Re-organizing all terms in $\mathcal B$ by $\xi_{t}\ (\forall t\geq T^\sharp)$, we obtain
{
\begin{align*}
& \textstyle\sum_{\tau=T^{\sharp}}^T\bar\xi_{\to\tau}^r[\Delta_\tau-\bar\Delta_{\to\tau}] - (T^\sharp-1) \bar\xi_{\to T^\sharp-1}^r\bar\Delta_{\to T^\sharp-1}\\
= &\ \textstyle\sum_{t=T^\sharp}^T \frac{\xi_t^r}{t-1}\sum_{\tau=T^{\sharp}}^{t-1}(\Delta_\tau-\bar\Delta_{\to\tau}) - (T^\sharp-1) \bar\xi_{\to T^\sharp-1}^r\bar\Delta_{\to T^\sharp-1} \\
= &\ \textstyle \sum_{t=T^\sharp}^T\frac{\xi_t^r}{t-1}\left[-(t-1)\bar\Delta_{\to t-1}+(T^\sharp-1) \bar\Delta_{\to T^\sharp-1} \right] - (T^\sharp-1) \bar\xi_{\to T^\sharp-1}^r\bar\Delta_{\to T^\sharp-1} \\
=&\ \textstyle -\sum_{t=T^\sharp}^T \xi_t^r\bar\Delta_{\to t-1},
\end{align*}
where the last equality uses
$\overline\xi_{\to T^\sharp-1}^\delta = \frac{\xi_T^\delta}{T-1}+\cdots+\frac{\xi_{T^\sharp}^\delta}{T^\sharp-1}$.
}
Note that the random variable $\bar\Delta_{\to \tau-1}=\frac{\Delta_T}{T-1}+\cdots + \frac{\Delta_\tau}{\tau-1}$ is measurable with respect to $\mathcal{F}_{\tau+1}$, since the DP policy is non-anticipating.
By Doob's optional stopping theorem, we have
{
$\E[
-\sum_{\tau=T^\sharp}^T \xi_\tau^r\bar\Delta_{\to \tau-1}]=0$ and thus
\begin{equation}
\textstyle
\E[\mathcal B] = \E\left[
-\sum_{\tau=T^\sharp}^T \xi_\tau^r\bar\Delta_{\to \tau-1}\right]
+ \E\left[(T^\sharp-1)(\bar\xi_{\to T^\sharp-1}^r+\eta^r)\eta\right]
=  \E\left[(T^\sharp-1)(\bar\xi_{\to T^\sharp-1}^r+\eta^r)\eta\right].
\label{eq:B-final}
\end{equation}
}
{
Next we analyze the term $\mathcal C$. Note that $\mathcal C$ has a similar structure as $\mathcal B$; therefore 
\begin{align*}
    \E[\mathcal C] 
= &\  \E\left[-\sum_{t=T^\sharp}^T \xi_t^r\bar\xi_{\to t-1}^\delta\right] - \E\left[(T^\sharp-1)\eta^r\bar\xi^{\delta}_{\to T^\sharp-1}\right]
= \E\left[-\sum_{t=T^\sharp}^T \xi_t^r(\frac{\xi_T^\delta}{T-1}+\cdots+\frac{\xi_{t+1}^\delta}{t}+\frac{\xi_t^\delta}{t-1})\right] \\
& - \E\left[(T^\sharp-1)\eta^r\bar\xi^{\delta}_{\to T^\sharp-1}\right] 
= \E\left[-\sum_{t=T^\sharp}^T \frac{\xi_t^r \xi_t^\delta}{t-1} \right]
- \E\left[(T^\sharp-1)\eta^r\bar\xi^{\delta}_{\to T^\sharp-1}\right],
\end{align*}
where the last equality uses Doob's optional stopping theorem.
Because $|\xi_t^r|\leq B_{\xi}\ a.s.$, we have
\begin{align}
& \left|\E\left[-\sum_{t=T^\sharp}^T \frac{\xi_t^r \xi_t^\delta}{t-1} \right]\right|
\leq \E\left[\sum_{t=T^\sharp}^T \frac{|\xi_t^r| |\xi_t^r-\xi_t^*|}{t-1} \right]
\leq B_{\xi} \E\left[\sum_{t=T^\sharp}^T \frac{ |\xi_t^r-\xi_t^*|}{t-1} \right]
= B_{\xi} \E\left[\sum_{t=2}^T \vct{1}\{T^\sharp \leq t\}\frac{ |\xi_t^r-\xi_t^*|}{t-1} \right] 
\nonumber\\
= &\  B_{\xi} \E\left[\sum_{t=2}^T \vct{1}\{T^\sharp \leq t\}
\frac{ \E [|\xi_t^r-\xi_t^*|\mid \mathcal{F}_{t+1}]}{t-1} \right]
\leq B_{\xi} \E\left[\sum_{t=2}^T \vct{1}\{T^\sharp \leq t\}
\frac{ \sqrt{\E[|\xi_t^r-\xi_t^*|^2\mid \mathcal{F}_{t+1}]}}{t-1} \right],
\label{eq:C-intermediate-1}
\end{align}
where the last equality holds because the event $\{T^\sharp \leq t\} = \{T^\sharp \geq t+1\}^c \in \mathcal{F}_{t+1}$.

Because the regret is defined as the \emph{difference} between the expected revenues under the optimal DP policy and the re-solving heuristic, we can choose the
joint distribution of $(\xi_t^*, \xi_t^r)$ freely, as long as their marginal distributions remain the same.
We choose the joint distributions as follows.
At each time period $t$ with posted prices $p_t^*,p_t^r$ and corresponding demand rates $x_t^*+\Delta_t$ and $x_t^r$,
let $(\xi_t^*,\xi_t^r)\sim \Xi_t$
such that the marginal distributions are $Q_t(p_t^*)$, $Q_t(p_t^r)$ and furthermore 
\[
\sqrt{\E\left[|\xi_t^*-\xi_t^r|^2|\mathcal F_{t+1}\right]}
= \sqrt{\E_{(\xi_t^*,\xi_t^r)\sim\Xi_t}\left[|\xi_t^*-\xi_t^r|^2\right]}
\leq L|f(p_t^*)-f(p_t^r)| = L|x_t^*+\Delta_t-x_t^r|\quad a.s.
\]
The existence of such a joint distribution $\Xi_t$ is implied by $\mathcal W_2(Q(p_t^*),Q(p_t^r))\leq L|f(p_t^*)-f(p_t^r)|$ (see  Assumption (A5)).
As a result,
Eq.~(\ref{eq:C-intermediate-1}) can be simplified to
\begin{equation}
\big|\E[\mathcal C]\big|
\leq LB_\xi \E\left[\sum_{\tau=T^\sharp}^T \frac{|\Delta_{\tau}-\bar\Delta_{\to \tau}+\bar\xi_{\to \tau}^\delta|}{\tau-1}\right]+
\E\left[(T^\sharp-1)\eta^r\big|\bar\xi^{\delta}_{\to T^\sharp-1}\big|\right].
    \label{eq:C-final}
\end{equation}
Combining Eq.~(\ref{eq:proof-main-ub-3}) with Eqs.~(\ref{eq:A-final},\ref{eq:B-final},\ref{eq:C-final}), we have
\begin{align}
&\ \E\big[r'(x_T)\mathcal A - r''(x_T)\mathcal B + r''(x_T)\mathcal C + \mathcal D \big] \nonumber\\
\leq &\  \E\bigg[ (T^\sharp-1)\left(
r'(x_T)\eta - r''(x_T)(\bar\xi_{\to T^\sharp-1}^r+\eta^r)\eta 
-r''(x_T)\eta^r \big|\bar\xi^{\delta}_{\to T^\sharp-1}\big|
+ \frac{M^2}{8m}|\bar\xi_{\to T^\sharp-1}^r|^4
\right) \nonumber\\
&\  + \sum_{\tau=T^{\sharp}}^T \left(
\left(|r''(x_T)|\frac{LB_\xi}{\tau-1} + \frac{M}{2}|\bar\xi_{\to\tau}^r|^2\right)
|\Delta_\tau-\bar\Delta_{\to\tau}+\bar\xi_{\to\tau}^\delta| - \frac{m}{2}|\Delta_\tau-\bar\Delta_{\to\tau}+\bar\xi_{\to\tau}^\delta|^2\right)  
\bigg] \nonumber \\
\leq &\  E\bigg[ (T^\sharp-1)\left(
r'(x_T)\eta - r''(x_T)(\bar\xi_{\to T^\sharp-1}^r+\eta^r)\eta 
-r''(x_T)\eta^r \big|\bar\xi^{\delta}_{\to T^\sharp-1}\big|
+ \frac{M^2}{8m}|\bar\xi_{\to T^\sharp-1}^r|^4
\right) \nonumber \\
&\ +  \sum_{\tau=T^{\sharp}}^T \frac{1}{2m}
\left(|r''(x_T)|\frac{LB_\xi}{\tau-1} + \frac{M}{2}|\bar\xi_{\to\tau}^r|^2\right)^2 
 \bigg] \nonumber \\
\leq &\  E\bigg[ (T^\sharp-1)\left(
r'(x_T)\eta - r''(x_T)(\bar\xi_{\to T^\sharp-1}^r+\eta^r)\eta 
-r''(x_T)\eta^r \big|\bar\xi^{\delta}_{\to T^\sharp-1}\big|
+ \frac{M^2}{8m}|\bar\xi_{\to T^\sharp-1}^r|^4
\right) \nonumber\\
&\ +  \sum_{\tau=T^{\sharp}}^T \frac{1}{m}
\left(\bigl(|r''(x_T)|\frac{LB_\xi}{\tau-1}\bigr)^2 + \frac{M^2}{4}|\bar\xi_{\to\tau}^r|^4\right) 
 \bigg]. 
\label{eq:proof-main-ub-4}
\end{align}
In the second inequality above, we use the fact that
$-\frac{m}{2}u^2+bu = -\frac{m}{2}(u-\frac{b}{m})^2 +\frac{b^2}{2m} \leq \frac{b^2}{2m}$ 
for any $b,u\in\mathbb R$ and $m>0$.
In the third inequality, we use the fact that $(a+b)^2\leq 2(a^2+b^2)$ for any $a,b\in\mathbb{R}$.

Recall that $\eta^r=(r'(x_T)/r''(x_T)-\bar\xi_{\to T^\sharp-1}^r)^+$, so
$r'(x_T)-r''(x_T)(\bar\xi_{\to T^\sharp-1}^r+\eta^r) \geq 0$ and
$[r'(x_T)-r''(x_T)(\bar\xi_{\to T^\sharp-1}^r+\eta^r)]\eta^r = 0$.
Recall that $\eta^{*} = - (x^*_{T^\sharp -1}-x^{u})^+ \leq 0$,
so $[r'(x_T) - r''(x_T)(\bar\xi_{\to T^\sharp-1}^r+\eta^r)]\eta^* \leq 0$.
In sum, 
as $\eta = \eta^* + \eta^r$, we have
$[r'(x_T)-r''(x_T)(\bar\xi_{\to T^\sharp-1}^r+\eta^r)]\eta \leq 0$.

Furthermore, by the definition of the stopping time $T^\sharp$, we have $|\bar\xi_{\to T^\sharp-1}^r| = |\bar\xi_{\to T^\sharp}^r+\xi^r_{T^\sharp}/(T^\sharp-1)|\leq r'(x_T)/|r''(x_T)| + B_\xi/(T^\sharp-1)$, so $\eta^r \leq B_\xi/(T^\sharp-1)\ a.s$. 
By Eq.~\eqref{eq:proof-main-ub-4}, we get
\begin{align}
& \E\big[r'(x_T)\mathcal A - r''(x_T)\mathcal B + r''(x_T)\mathcal C + \mathcal D \big]\nonumber\\
\leq &
\E\bigg[ |r''(x_T)|B_\xi \big|\bar\xi^{\delta}_{\to T^\sharp-1}\big|  + (T^\sharp-1)\frac{M^2}{8m}|\bar\xi_{\to T^\sharp-1}^r|^4 +  \sum_{\tau=T^{\sharp}}^T \frac{1}{m}
\left(\bigl(|r''(x_T)|\frac{LB_\xi}{\tau-1}\bigr)^2 + \frac{M^2}{4}|\bar\xi_{\to\tau}^r|^4\right) 
 \bigg]. \label{eq:proof-main-ub-5}
\end{align}

To complete the proof, we upper bound each term in Eq.~(\ref{eq:proof-main-ub-5}).
First it is easy to verify that
\begin{equation}
\sum_{\tau=T^\sharp}^T \left(\frac{|r''(x_T)|LB_\xi}{\tau-1}\right)^2 \leq (|r''(x_T)|LB_\xi)^2 \sum_{j=1}^{T-1}\frac{1}{j^2} \leq 2(|r''(x_T)|LB_\xi)^2\quad a.s.
\label{eq:proof-main-ub-6}
\end{equation}
We next focus on the terms involving $|\bar\xi_{\to\tau}^r|^4$. 
Recall the definition that $\bar\xi_{\to t}^r = \frac{\xi_T^r}{T-1}+\cdots+\frac{\xi_{t+1}^r}{t}$.
Let $z_t:=\bar\xi_{\to t-1}^r$. Then $\{z_t\}$ is a martingale adapted to the filtration $\{\mathcal{F}_{t}\}_{t=1}^T$ by Assumption (A4), which implies that $\{|z_t|^4\}$ is a submartingale. Let $S_t = \sum_{\tau=t}^{T} (t-1)(|z_\tau|^4-|z_{\tau+1}|^4)$, then $\{S_t\}$ is also a submartingale. Since $T^\sharp$ is stopping time, we have
\[
\E\left[(T^\sharp-1)|\bar\xi_{\to T^\sharp-1}^r|^4 +  \sum_{\tau=T^{\sharp}}^T   |\bar\xi_{\to\tau}^r|^4 \right]
= \E[S_{T^\sharp}]\leq \E[S_{1}]=
\E\left[\sum_{\tau=2}^T |z_\tau|^4\right]
=
\E\left[\sum_{\tau=1}^T   |\bar\xi_{\to\tau}^r|^4 \right].
\]}
It is easy to verify that
\begin{align*}
\E[|\bar\xi_{\to t}^r|^4]
&= \sum_{j,k>t}\frac{\E[|\xi_j^r|^2|\xi_k^r|^2]}{(j-1)^2(k-1)^2}
\leq B_\xi^4 \left(\sum_{j>t}\frac{1}{(j-1)^2}\right)^2 \leq \frac{4 B_\xi^4}{t^2}.
\end{align*}
Subsequently, 
\begin{align}
\E\left[\frac{M^2}{8m}(T^\sharp -1)|\bar\xi_{\to T^\sharp-1}^r|^4 +\frac{M^2}{4m} \sum_{\tau=T^\sharp}^T\big|\bar\xi_{\to\tau}^r\bigl|^4\right]
&\leq \frac{M^2}{4m}\mathbb \sum_{\tau=1}^T \frac{4B_\xi^4}{\tau^2} \leq \frac{M^2B_\xi^4}{2m}.
    \label{eq:proof-main-ub-7}
\end{align}
To analyze the term $|\overline\xi_{\to T^\sharp -1}^\delta|$, let $\zeta_t:=\bar\xi_{t-1}^{\delta}$. As before, $\{\zeta_t\}$ is martingale and thus $\{|\zeta_t|\}$ is a submartingale. 
Since $\overline\xi_{\to\tau}^\delta = \frac{\xi_T^\delta}{T-1}+\cdots+\frac{\xi_{\tau+1}^\delta}{\tau}$ and $|\xi_{\tau}^{\delta}| = |\xi_{\tau}^{r}-\xi_{\tau}^{*}| \leq 2B_\xi\ a.s.$,
we have
\begin{equation}
    \E\big[ \big|\bar\xi^{\delta}_{\to T^\sharp-1}\big| \big] = \E[|\zeta_{T^\sharp}|]
    \leq \mathbb[|\zeta_{2}|] = \E\big[ \big|\bar\xi^{\delta}_{\to 1}\big| \big] \leq 
    \sqrt{\E\big[ \big|\bar\xi^{\delta}_{\to 1}\big|^2 \big]} \leq 2B_\xi \sqrt{\sum_{j=1}^{T-1}\frac{1}{j^2}} \leq {3B_\xi}.
    \label{eq:proof-main-ub-8}
\end{equation}


Combining Eqs.~(\ref{eq:proof-main-ub-3},\ref{eq:proof-main-ub-5},\ref{eq:proof-main-ub-6},\ref{eq:proof-main-ub-7},\ref{eq:proof-main-ub-8}) we have
\begin{align}
{
\phi_T^*(x_T)-\phi_T^r(x_T) \leq \frac{M^2B_\xi^4}{2m} + \frac{2(|r''(x_T)|LB_\xi)^2}{m} +
3|r''(x_T)|B_\xi^2 + r(x^u)\E[T^\sharp-1] = O(1),
}
\label{eq:const-regret-expression}
\end{align}
where the last equality holds by applying Lemma~\ref{lem:T-sharp-ub}.
This completes the proof of Theorem \ref{thm:main-ub}.

\subsection{Proof of Theorem \ref{thm:main-lb}}

Recall that $x_\tau^r = x_T-\bar\xi_{\to\tau}^r$ for all $\tau\geq T^\sharp-1$, where $T^\sharp$ is the stopping time defined in Eq.~\eqref{eq:defn-Tsharp}.
Because this proof only concerns the re-solving policy, for convenience we will drop the superscript $r$ and denote $\xi_\tau^r,\bar\xi_{\to\tau}^r$ by $\xi_\tau,\bar\xi_{\to\tau}$.

Invoking Lemma~\ref{lem:ft-star-expansion}, we have
\begin{align}
 Tr(x_T) -\phi_T^r(x_T)
& \geq\textstyle \E\left[\sum_{\tau=T^\sharp}^T (r(x_T) - r(x_\tau^r)) + (T^\sharp-1)(r(x_T)-r(\min\{x^r_{T^\sharp-1},x^u\})\right]\nonumber\\
& =\textstyle \E\left[\sum_{\tau=T^\sharp}^T (r(x_T) - r(x_\tau^r)) + (T^\sharp-1)(r(x_T)-r(x^r_{T^\sharp-1})\right] \nonumber\\
& \quad + \E\left[(T^\sharp-1)(r(x^r_{T^\sharp-1})-r(\min\{x^r_{T^\sharp-1},x^u\}) \right].\label{eq:proof-main-lb-0}
\end{align}
{
By the smoothness of $r(d)$ (Assumption (A3)) and the fact that $x^r_{T^\sharp}\leq x^u$ (see Eq.~\eqref{eq:defn-Tsharp}), the second term in Eq.~\eqref{eq:proof-main-lb-0} is bounded by
\begin{align*}
    \E\left[(T^\sharp-1)(r(x^r_{T^\sharp-1})-r(\min\{x^r_{T^\sharp-1},x^u\}) \right] \geq&\ 
-C\E\left[(T^\sharp-1)(x^r_{T^\sharp-1} - x^u)^+ \right] \\
\geq&\  -C\E\left[(T^\sharp-1)|x^r_{T^\sharp-1} - x^r_{T^\sharp} | \right]\\
=&\  \textstyle -C\E\left[(T^\sharp-1)\left|\frac{\xi_{T^\sharp}}{T^\sharp -1}\right| \right]
\geq -CB_\xi,
\end{align*}
where the last inequality holds since $|\xi_{\tau}|\leq B_\xi\ a.s$ for all $\tau$. 

In the rest of the proof, we will bound the first term in Eq.~\eqref{eq:proof-main-lb-0}.
} Expanding the difference $r(x_\tau^r)-r(x_T)$ at $x_T$ and using the strict concavity of $r(d)$ (Assumption (A2)), we have
\[
r(x_T) - r(x_\tau^r) = r(x_T) - r(x_T - \bar\xi_{\to\tau}) \geq r'(x_T)\bar\xi_{\to\tau} + \frac{m}{2}|\bar\xi_{\to\tau}|^2,
\qquad \forall \tau \geq T^\sharp -1.
\]
Therefore,
\begin{align}
&\textstyle\quad  \E\left[\sum_{\tau=T^\sharp}^T (r(x_T) - r(x_\tau^r)) + (T^\sharp-1)(r(x_T)-r(x^r_{T^\sharp-1}))\right]\nonumber\\
& \textstyle \geq  r'(x_T)\E\left[\sum_{\tau=T^\sharp}^T\bar\xi_{\to\tau} + (T^\sharp-1) \bar\xi_{\to T^\sharp-1} \right]
+ \dfrac{m}{2} \E\left[\sum_{\tau=T^\sharp}^T |\bar\xi_{\to\tau}|^2 + (T^\sharp-1)|\bar\xi_{\to T^\sharp-1}|^2\right]
\label{eq:proof-main-lb-1}
\end{align}
For the first term in Eq.~\eqref{eq:proof-main-lb-1}, because $\bar\xi_{\tau}=\sum_{t=\tau+1}^T \xi_t/(t-1)$, it holds that
\begin{equation}
\textstyle
\E[\sum_{\tau=T^\sharp}^T\bar\xi_{\to\tau} + (T^\sharp-1)\bar\xi_{\to T^\sharp-1} ] = \E[\sum_{t=T^\sharp}^T\xi_t] = 0
\label{eq:proof-main-lb-2}
\end{equation}
by Doob's optional stopping theorem (recall that $T^\sharp$ is a stopping time).
{ For the second term in Eq.~\eqref{eq:proof-main-lb-1}, 
using $\bar\xi_\tau = \frac{\xi_T}{T-1}+\cdots+\frac{\xi_{\tau+1}}{\tau}$,
we have
\begin{align}
   &\quad \E\left[\sum_{\tau=1}^T |\bar\xi_{\to\tau}|^2 \right]
    -  \E\left[\sum_{\tau=T^\sharp}^T |\bar\xi_{\to\tau}|^2 + (T^\sharp-1)|\bar\xi_{\to T^\sharp-1}|^2\right] = 
    \E\left[\sum_{\tau=1}^{T^\sharp-2} \left(\bar\xi_{\to\tau}^2-\bar\xi_{\to T^\sharp -1}^2\right) \right]\nonumber\\
    & 
    = 2\E\left[ \sum_{\tau=1}^{T^\sharp-2} 
    (\bar\xi_{\to\tau} - \bar\xi_{\to T^\sharp -1}) \bar\xi_{\to T^\sharp -1}\right]
    + \E\left[ \sum_{\tau=1}^{T^\sharp-2} 
    (\bar\xi_{\to\tau} - \bar\xi_{\to T^\sharp -1})^2\right]
    \nonumber\\
    & = 2  \E\left[ \bar\xi_{T^\sharp-1} \sum_{\tau=1}^{T^\sharp-2} \sum_{j = \tau+1}^{T^\sharp-1}\frac{\xi_j}{j-1} \right]
    + \E\left[ \sum_{\tau=1}^{T^\sharp-2} \left(\sum_{j = \tau+1}^{T^\sharp-1}\frac{\xi_j}{j-1} \right)^2\right].
    \label{eq:proof-main-lb-3}
\end{align}
Because $T^\sharp$ is a stopping time and $\{\xi_\tau\}$ is a martingale difference sequence, we have
$\E[{\xi_j}\mid j<T^\sharp]=0$ and
$\E[{\xi_j}{\xi_k}\mid j < k<T^\sharp]=0$. So we have
\[
\text{Eq.}\eqref{eq:proof-main-lb-3} = 
0 + \E\left[\sum_{\tau=1}^{T^\sharp-2} \sum_{j = \tau+1}^{T^\sharp-1}\left(\frac{\xi_j}{j-1} \right)^2\right]
= \E\left[\sum_{j=2}^{T^\sharp-1} (j-1)\left(\frac{\xi_j}{j-1} \right)^2\right]
\leq \E\left[\sum_{j=2}^{T^\sharp-1}\xi_{j-1}^2\right]\leq B_\xi^2\E[T^\sharp-2].
\]
Moreover, $\E[\sum_{\tau=1}^T \bar\xi_{\to\tau}^2] = \E[\sum_{j=2}^T \frac{\xi^2_j}{j-1}] \geq \sigma^2 \sum_{j=2}^T \frac{1}{j-1} \geq \sigma^2 \int_{1}^T u \mathrm{d}u
= \sigma^2 \ln T$, so by the above equation and Eq.~\eqref{eq:proof-main-lb-3}, we have
\begin{equation}
    \E\left[\sum_{\tau=T^\sharp}^T |\bar\xi_{\to\tau}|^2 + (T^\sharp-1)|\bar\xi_{\to T^\sharp-1}|^2\right] \geq \sigma^2 \ln T - 
    B_\xi^2\E[T^\sharp-2] = \sigma^2 \ln T - O(1),
    \label{eq:proof-main-lb-4}
\end{equation}
where the last equality holds by Lemma~\ref{lem:T-sharp-ub}.
}

Combining Eqs.~(\ref{eq:proof-main-lb-0},\ref{eq:proof-main-lb-1},\ref{eq:proof-main-lb-2},\ref{eq:proof-main-lb-4}), we get $Tr(x_T) -\phi_T^r(x_T) \geq \frac{m\sigma^2}{2}\ln T - O(1)$
and the proof of Theorem~\ref{thm:main-lb} is complete.

\section{Numerical Results}
\label{sec:numerical_results}

We corroborate the theoretical findings in this paper with a few simple numerical experiments.
In the simulation we assume a linear demand curve with Bernoulli demand distribution: $\Pr[d_t=1|p_t]=\alpha-\beta p_t$, $\Pr[d_t=0|p_t]=1-\Pr[d_t=1|p_t]$
with $p\in[0, 1]$, $\alpha=3/4$ and $\beta=1/2$.
The (normalized) initial inventory level is $x_T=5/16$, meaning that for problem instances with $T$ time periods
the initial inventory level is $x_TT =5T/16$.
It is easy to verify that the optimal demand rate $x^{u}$ without inventory constraints is $x^{u}=3/8>x_T$,
and the fluid approximation suggests a $Tr(x_T) = (19/32)T=.59375 T$ expected revenue.
We select the Bernoulli demand distribution because the states of inventory levels are discrete and therefore the optimal dynamic programming
pricing policy can be exactly obtained.

\begin{table}[hbt]
\centering
\caption{Regret for the fluid model, the optimal static policy $\pi^s$ and the re-solving heuristics $\pi^r$
compared against the value of the optimal DP pricing policy.}
\label{tab:result}
\begin{tabular}{l|cccccccccc}
\hline
$\log_2 T$& 6& 7& 8& 9& 10& 11& 12& 13& 14& 15\\
\hline
Fluid Model& -0.90& -1.13& -1.37& -1.63& -1.91& -2.19& -2.48& -2.78& -3.08& -3.37\\
Static policy $\pi^s$& 0.38& 0.70& 1.22& 2.03& 3.27& 5.13& 7.84& 11.81& 17.55& 25.84 \\
Resolving heuristics $\pi^r$&0.11& 0.15& 0.18& 0.21& 0.23& 0.23& 0.24& 0.24& 0.24& 0.25 \\
\hline
\end{tabular}
\end{table}

In Table \ref{tab:result} we report the regret of the fluid approximation, the static policy $\pi^s: p_t\equiv f^{-1}(x_T)=7/8$
and the re-solving heuristics $\pi^r$.
All regret is defined with respect to the value (expected reward) of the optimal DP pricing policy,
and the regret for the fluid approximation benchmark is negative since the fluid model always upper bounds the value of any policy.
Both the static policy $\pi^s$ and the re-solving heuristics $\pi^r$ are run for each value of $T$
ranging from $T=2^6=64$ to $T=2^{15}=32,768$ to 
obtain an accurate estimation of their expected rewards. 
We also plot the regret in Figure \ref{fig:result} to make the regret growth of each policy more intuitive.

\begin{figure}[tb]
\centering
\includegraphics[width=0.6\textwidth]{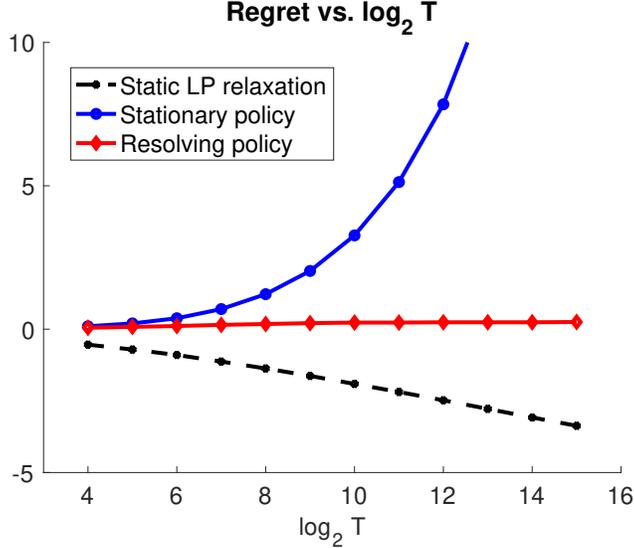}
\caption{Plots of regret of the fluid model, the static policy $\pi^s$ and the re-solving heuristics $\pi^r$
compared against the value of the optimal DP pricing policy.}
\label{fig:result}
\end{figure}

As we can see from Table \ref{tab:result}, the gap between the value of the optimal policy and
the value of the fluid model grows nearly linearly as the number of time periods $T$ grows geometrically,
which verifies the $\Omega(\ln T)$ growth rate established in Theorem \ref{thm:main-lb}.
On the other hand, the growth of regret of the re-solving heuristics $\pi^r$ stagnated at $T\geq 2^{10}$
and is nearly the same for $T$ ranging from $2^{10}=1024$ to $2^{15}=32768$.
This shows the asymptotic growth of regret of $\pi^r$ is far slower than $O(\ln T)$ and is compatible with the $O(1)$ regret upper bound
we proved in Theorem \ref{thm:main-ub}.

\begin{figure}[t]
\centering
\includegraphics[width=0.48\textwidth]{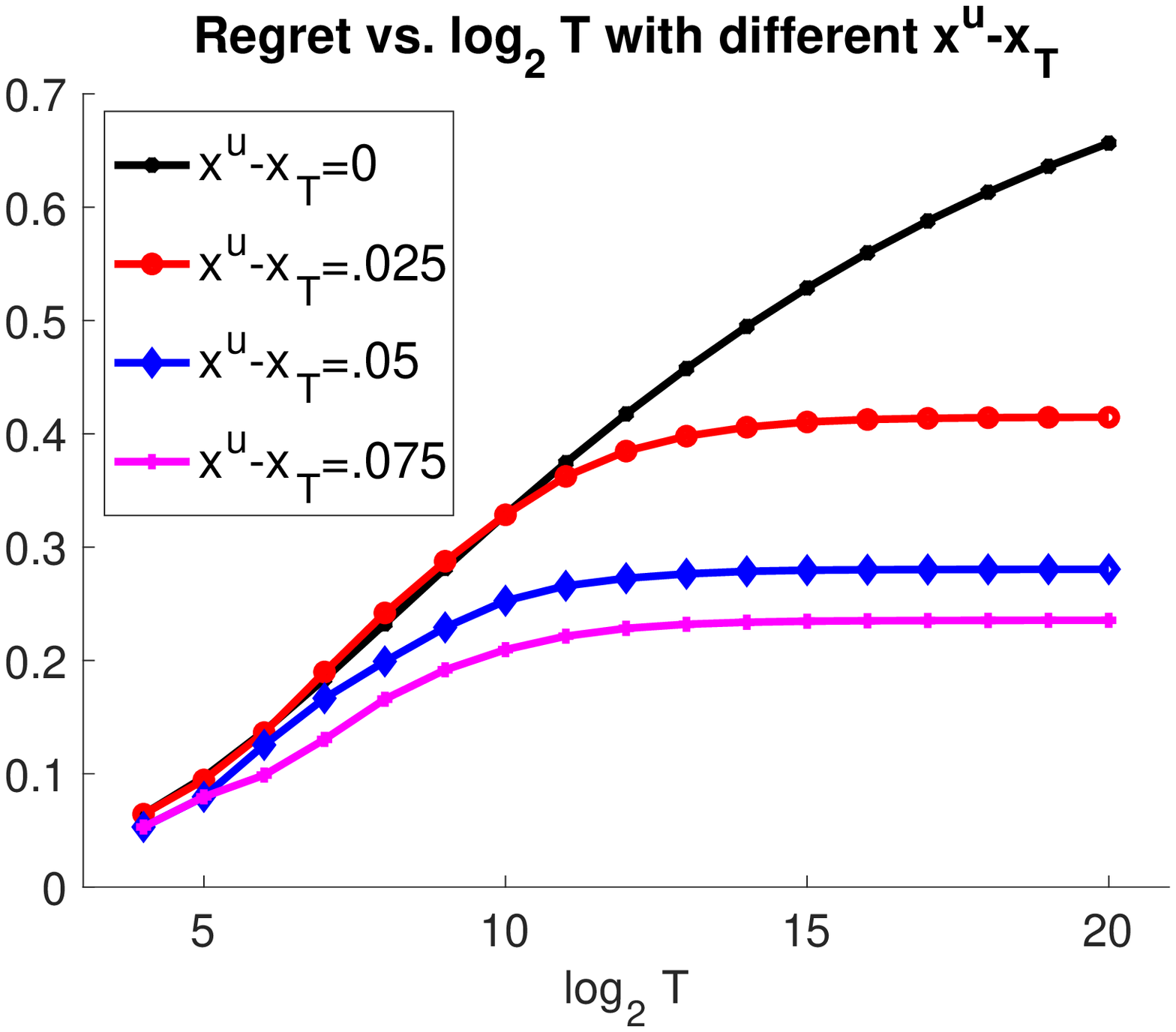}
\includegraphics[width=0.48\textwidth]{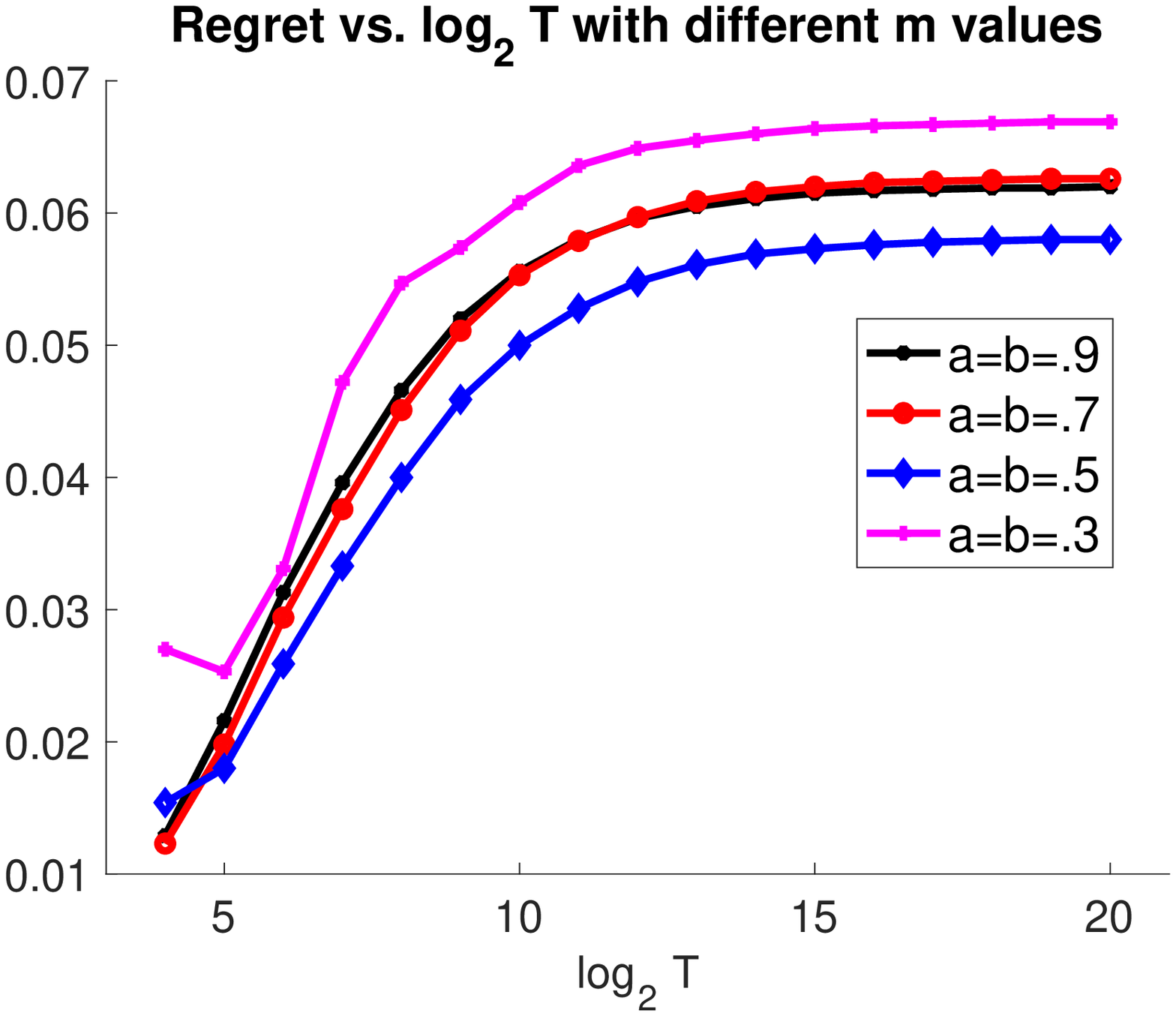}
\label{fig:regret-gap-m}
\caption{Cumulative regret of the resolving heuristic (compared with the optimal DP value) under different $x^u-x_T$ and $m$ settings.}
\end{figure}

{
We report additional sets of numerical results in Figure \ref{fig:regret-gap-m}, in which we only report the cumulative regret of the re-solving heuristic compared against the benchmark of the optimal DP policy.

On the left panel of Figure \ref{fig:regret-gap-m}, we report the regret of the re-solving heuristic with $T$ ranging from $2^4=16$ to $2^{20}\approx 1,000,000$ and different $x^u-x_T$ gap values. More specifically, all four curves are reported under the demand model $d=.75-.5p$, with unconstrained optimum $x^u=.375$ and (normalized) initial inventory levels $x_T\in\{.3, .325, .35, .375\}$.
Figure \ref{fig:regret-gap-m} clearly shows that the regret of the re-solving heuristic increases as the gap between $x^u$ and $x_T$ narrows, and furthermore in the boundary case (i.e., $x_T=x^u$) the regret seems to grow logarithmically in $T$.
It is an interesting direction of future research to formally establish the logarithmic regret for the boundary case and explore alternative policies that attain constant regret with $x_T=x^u$.

On the right panel of Figure \ref{fig:regret-gap-m}, we report the regret of the re-solving heuristic with different $a,b$ values in the demand model $d=a-bp$, with $a=b\in\{0.3,0.5,0.7,0.9\}$. The normalized initial inventory level is fixed at $x_T=0.1$. With different values of the slopes, the demand and revenue models exhibit different strong concavity parameter values, with $r''(p)=-b/2$ and $r''(d) = -1/(2b)$. Unlike the gap $x^u-x_T$, the results reported in the right panel of Figure \ref{fig:regret-gap-m} do not paint a clear picture of the role the strong concavity parameters played in the regret. Overall, intermediate values ($b=0.5, r''(p)=-.25, r''(d)=-1$) seem to result in the lowest regret of the re-solving heuristic.
}

\section{Extension to Multiple Products}\label{sec:multi-product}

In this section, we extend our constant regret result to the case when there are $n\geq 2$ products with correlated demand. 
We follow the convention in Section~\ref{sec:proofs} and count time indices backwards.
The seller starts with an initial inventory vector $\BFx_T T$, where $\BFx_T=(x_{T}(1),\cdots,x_{T}(n))\in\mathbb R_+^n$ is the normalized initial inventory vector.
When there are $t$ periods remaining, the seller posts a price vector $\BFp_t=(p_{t}(1),\cdots,p_{t}(n))\in \mathcal{P} \subset R_+^n$ and observes a realized demand vector $\BFd_t = \BFf(\BFp_t)+\vct\xi_t$, where $\BFf: \mathcal{P} \to \mathcal{D}$ is the demand curve and $\BFxi_t\sim Q(\BFp_t)$ is a centered noise vector. The realized revenue at period $t$ is $r_t = \langle{\BFp_t},{\BFd_t}\rangle$, which in expectation equals to $\E[r_t \mid \BFp_t] = \langle{\BFp_t},{\BFf(\BFp_t)}\rangle$.
When the inventory level of a specific product $k\in[n]$ dips below zero ($[n]:=\{1,2,\cdots,n\}$), the seller is forced to set $p_{t}(k)=+\infty$ for all remaining time periods, resulting in $d_{t}(k)=0$ almost surely.

Throughout this section, we use $\|\BFx\|$ to denote the Euclidean norm of any vector $\BFx$, and $\|\BFA\|_2$ to denote the spectral norm of any matrix $\BFA$.
We extend the assumptions (A1)--(A5) to the multiple-product setting as follows:
\begin{enumerate}
\item[B1.] {(\bf Invertibility)} The demand rate function $\BFf: \mathcal{P} \to \mathcal{D}$ is 
a bijection, where $\mathcal{P}\subset \mathbb{R}^n_+, \mathcal{D} \subset \mathbb{R}^n_+$. Let $\BFf^{-1}: \mathcal{D} \to \mathcal{P}$ denote its inverse function. Assume $\mathcal{D}$ is convex, compact, has nonempty interior, and satisfies $\BFzero \in \mathcal{D}$.
\item[B2.] {(\bf Strict Concavity)} The expected revenue $r(\BFd)=\langle \BFd, \BFf^{-1}(\BFd)\rangle$ as a function of the demand rate vector $\BFd$ is strictly concave. That is, there exists a positive constant $m'>0$ such that $\nabla^2 r(\BFd)\preceq -m'\mat I_n$ for all $\BFd\in \mathcal{D}$.
The maximizer of $r(\BFd)$ is in the interior of the domain $\mathcal{D}$.
\item[B3.] {(\bf Smoothness)} $r(\BFd)$ is three times continuously differentiable in the interior of $\mathcal{D}$.
\item[B4.] {(\bf Martingale Difference Sequence)}
Conditional on any price vector $\BFp_t \in \mathcal{P}$ ($\forall t = 1,\ldots,T$), the demand noise $\vct\xi_t$ is independent of $\{\vct\xi_{T},\ldots,\vct\xi_{t+1}\}$ and satisfies 
$\E[\vct\xi_t \mid \mathcal{F}_{t+1}] = \E[\vct\xi_t \mid \BFp_t] =\BFzero \ a.s.$ 
The conditional distribution of $\vct\xi_t$ given $\BFp_t$ is denoted by $\vct\xi_t\sim Q(\BFp_t)$. In addition, $\|\vct\xi_t\|_{}\leq B_\xi \ a.s.$ for some constant $B_\xi<\infty$.
\item[B5.] {\bf (Wasserstein Distance)} There exists a constant $L>0$ such that for any $\BFp,\BFp'\in \mathcal{P}$, it holds that $\mathcal W_2(Q(\BFp),Q(\BFp'))\leq L\|\BFf(\BFp)-\BFf(\BFp')\|_{}$,
where $\mathcal W_2(Q,Q'):= \inf_{\Xi}\sqrt{\E_{\vct\xi,\vct\xi'\sim\Xi}[\|\vct\xi-\vct\xi'\|_{}^2]}$ is the $L_2$-Wasserstein distance
between $Q,Q'$, with $\Xi$ being an arbitrary joint distribution with marginal distributions being $Q$ and $Q'$, respectively.
\end{enumerate}

Recall that $\BFx_T \in \mathbb{R}^n_+$ is the normalized inventory level vector at the beginning of the $T$ time periods (i.e., the total inventory is $\BFx_T T$). The fluid approximation model is formulated as
\[
    \max_{\BFx \in \mathcal{D}} \left\{ r(\BFx),\ 
    \mathrm{s.t.}\ \BFzero \leq \BFx  \leq \BFx_T\right\}. 
\]
At the beginning of each period $t=T,\cdots,1$, the re-solving heuristic solves 
\begin{align}\label{eq:fluid-multiple-product}
    \max_{\BFx \in \mathcal{D}} \{r(\BFx), 
    \ \mathrm{s.t.}\  \BFzero \leq  \BFx \leq \BFx_t\},
\end{align}
where $\BFx_t$ is the normalized inventory level at the beginning of period $t$. 
Since $\BFzero \in \mathcal{D}$ by Assumption (B1),  Eq.~\eqref{eq:fluid-multiple-product} always has feasible solutions.
Let the optimal solution to \eqref{eq:fluid-multiple-product} be $\BFx^c_t$ (the superscript $c$ stands for ``constrained"). The re-solving policy sets the price vector to $\BFp_t= \BFf^{-1}(\BFx^c_t)$ for period $t$.

In \eqref{eq:fluid-multiple-product}, for any given $\BFx_t \in \mathbb{R}^n_+$,
the $n$ products are partitioned into two disjoint sets based whether the inventory constraints are active at the point $\BFx^c_t$. The \emph{inventory-constrained} product $\mathcal I$ and the \emph{inventory-unconstrained} product set $\mathcal U$, defined as
\begin{align}
    \mathcal I &:= \{k\in[n]: \BFx^c_t(k)=\BFx_t(k)\}, \;\;\;\;\;\;
    \mathcal U := \{k\in[n]: \BFx^c_t(k)<\BFx_t(k)\}.
    \label{eq:set-partition}
\end{align}
As a special case, in the single-product setting ($n=1$), we have $\mathcal I=\varnothing,\  \mathcal{U} = \{1\}$ if $x_t > x^u = \arg\max_{x\in[\underline d,\overline d]}r(x)$, and $\mathcal I = \{1\},\ \mathcal U=\varnothing$ if $x_t \leq x^u$. 

From the definitions in Eqs.~(\ref{eq:fluid-multiple-product}, \ref{eq:set-partition}), it is clear that the sets $\mathcal I$ and $\mathcal U$ are determined by the inventory vector $\BFx_t$, which serves as the right-hand side of the fluid problem. We may thus write $\mathcal I(\BFx_t),\ \mathcal U(\BFx_t)$ to emphasize such dependency. This leads to a partition of $\mathbb{R}^n_+$ into $2^n$ sub-regions $\{\mathcal{S}_{\mathcal{I}},\ \forall \mathcal{I} \subset [n]\}$, where 
\begin{equation}\label{eq:defn-SI}
    \mathcal{S}_{\mathcal{I}} = \{\BFx \in \mathbb{R}^n_+:\ \mathcal I(\BFx) = \mathcal{I}\}.
\end{equation}
We specify an additional assumption on the initial inventory level $\BFx_T$ for the multi-product setting:
\begin{enumerate}
    \item [C1.] 
    The initial inventory level $\BFx_T$ is in the interior of $\mathcal{S}_{\mathcal{I}}$ for some $\mathcal{I}\subset [n]$. That is, given
    $\BFx_T \in \mathcal{S}_\mathcal{I}$, there exists 
    a neighborhood $B_{\delta_0}(\BFx_T)=
    \{\BFx'\in\mathbb{R}^n_+: \|\BFx'-\BFx\|_{} \leq \delta_0\}$ such that $B_{\delta_0}(\BFx_T) \subset \mathcal{S}_\mathcal{I}$.
\end{enumerate}
Intuitively, Assumption (C1) asserts that when the constrained inventory level $\BFx_t$ fluctuates in a close neighborhood of $\BFx_T$, the set of active constraints in the fluid problem Eq.~(\ref{eq:fluid-multiple-product}) remains unchanged.
In the single-product case ($n=1$), Assumption (C1) reduces to the condition $x_T\neq x^u$.

We now characterize the expected value of the optimal DP policy $\pi^*$ and the re-solving heuristic $\pi^r$ for the multi-product pricing problem. Let $\phi_\tau^*(\BFx_\tau^*)$ be the expected revenue of the optimal policy $\pi^*$ when there are $\tau$ time periods left with the inventory level being $\BFx_\tau^*\in\mathbb R_+^n$. We have the following Bellman equation:
\begin{equation}
\phi_\tau^*(\BFx_\tau^*) = \max_{\vct\Delta: \BFx_\tau^*+\vct\Delta\in \mathcal{D}} r(\BFx_\tau^*+\vct\Delta) + \E_{\vct\xi_\tau\sim Q(\BFf^{-1}(\BFx_\tau^*+\vct\Delta))}\left[\phi_{\tau-1}^*\left(\BFx_\tau^*-\frac{\vct\Delta+\vct\xi_\tau}{\tau-1}\right)\right]\quad \forall \tau = T,\cdots,1.
    \label{eq:bellman-star-multi-product}
\end{equation}
We denote the maximizer of Eq.~(\ref{eq:bellman-star-multi-product}) by $\vct\Delta_\tau$.
The DP policy selects the price vector $\BFp_\tau=\BFf^{-1}(\BFx_\tau^*+\vct\Delta_\tau)$. Let $\BFxi^*_{\tau}$ be the realized demand noise under this price.

The re-solving heuristic $\pi^r$ and its expected revenue $\phi_\tau^r(\BFx_\tau^r)$, on the other hand, satisfies the following recursive equation
\begin{equation}
\phi_\tau^r(\BFx_\tau^r) = r(\BFx_\tau^c) + \E_{\vct\xi_\tau\sim Q(\BFf^{-1}(\BFx_\tau^c))}\left[\phi_{\tau-1}^r\left(\BFx_\tau^r - \frac{\BFx_\tau^c-\BFx_\tau^r+\vct\xi_\tau}{\tau-1
}\right)\right]\quad \forall \tau = T,\cdots,1,
    \label{eq:bellman-r-multi-product}
\end{equation}
where $\BFx_\tau^c = \arg\max_{\BFx \in \mathcal{D}}\{r(\BFx),\ \mathrm{s.t.}\ \BFzero \leq \BFx \leq \BFx^r_\tau\}$ is the solution to the fluid model with $\BFx_\tau^r$ being the right-hand side.
The following theorem extends the constant regret result in Sec.~\ref{sec:main-results} to the multi-product setting.
\begin{theorem}\label{thm:upper-bound-multi-product}
Given a demand function $\BFf$ and an initial inventory level $\BFx_T\in \mathcal{S}_{\mathcal{I}}$ satisfying Assumptions (B1)--(B5) and (C1), for all $T\geq 2$, we have 
\[
\phi_T^r(\BFx_T)\geq \phi_T^*(\BFx_T)-O(1).
\]
\end{theorem}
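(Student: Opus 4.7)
The plan is to extend the single-product proof of Theorem \ref{thm:main-ub} by leveraging Assumption (C1), which ensures a stable active-constraint partition $\mathcal I \cup \mathcal U$ of the fluid problem \eqref{eq:fluid-multiple-product} in a neighborhood of $\BFx_T$. First I would define a multi-product stopping time
\[
T^\sharp := \max\Big\{\tau \geq 1:\ \|\bar{\vct\xi}^r_{\to \tau-1}\| > \gamma\Big\} \vee 2,
\]
where $\gamma>0$ is chosen to be the minimum of $\delta_0$ (from Assumption (C1)) and a constant analogous to $-r'(x_T)/r''(x_T)$ from the single-product proof, now derived from $\nabla r(\BFx_T)$ and the inverse of $\nabla^2 r(\BFx_T)$ restricted to the constrained block. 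Applying Doob's submartingale inequality coordinate-wise to the vector-valued martingale $\{\bar{\vct\xi}^r_{\to \tau-1}\}$ and using the almost-sure bound $\|\vct\xi_\tau^r\|\le B_\xi$ yields $\E[T^\sharp]=O(1)$, as in Lemma \ref{lem:T-sharp-ub}. For $\tau \geq T^\sharp-1$ the inventory $\BFx_\tau^r\in\mathcal S_\mathcal I$, so the active set of the re-solving fluid is exactly $\mathcal I$: we have $\BFx_\tau^c(\mathcal I)=\BFx_\tau^r(\mathcal I)$ while $\BFx_\tau^c(\mathcal U)=\BFg(\BFx_\tau^r(\mathcal I))$, with $\BFg$ a $C^2$ function obtained by the implicit function theorem applied to the first-order conditions $\nabla_{\mathcal U} r(\BFx_\tau^r(\mathcal I),\BFx_\tau^c(\mathcal U))=\BFzero$ (well-defined by the strict concavity $\nabla^2 r\preceq -m' \mat I_n$ in Assumption (B2)).

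Next I would establish the multi-product analogue of Lemma \ref{lem:ft-star-expansion}: using the optional stopping theorem on the revenue-noise martingale $\langle \BFf^{-1}(\cdot),\vct\xi_\tau\rangle$ and bounding the terminal tail by $(T^\sharp-1)\, r^u$ where $r^u := \max_{\BFd \in \mathcal D}r(\BFd)$, one obtains
\begin{align*}
\phi_T^*(\BFx_T) &\le \E\Big[\sum_{\tau=T^\sharp}^T r(\BFx_\tau^*+\vct\Delta_\tau)\Big] + r^u\,\E[T^\sharp-1], \\
\phi_T^r(\BFx_T) &\ge \E\Big[\sum_{\tau=T^\sharp}^T r(\BFx_\tau^c)\Big].
\end{align*}
The dynamics give $\BFx_\tau^* = \BFx_T-\bar{\vct\Delta}_{\to\tau}-\bar{\vct\xi}^*_{\to\tau}$, and, since $\BFx_\tau^c(\mathcal I)=\BFx_\tau^r(\mathcal I)$, the $\mathcal I$-coordinates of the re-solving inventory satisfy $\BFx_\tau^r(\mathcal I) = \BFx_T(\mathcal I)-\bar{\vct\xi}^r_{\to\tau}(\mathcal I)$ exactly; the $\mathcal U$-coordinates of $\BFx_\tau^c$ are then determined as $\BFg(\BFx_\tau^r(\mathcal I))$ and are irrelevant beyond their appearance in $r(\BFx_\tau^c)$.

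The heart of the proof is then a second-order Taylor expansion of $r$ around $\BFx_T$. Writing $\vct\alpha_\tau := \vct\Delta_\tau-\bar{\vct\Delta}_{\to\tau}+\bar{\vct\xi}^\delta_{\to\tau}$ (with $\bar{\vct\xi}^\delta_{\to\tau}:=\bar{\vct\xi}^r_{\to\tau}-\bar{\vct\xi}^*_{\to\tau}$) and using $\nabla^2 r\preceq -m' \mat I_n$ together with a uniform bound $M'$ on the third derivatives on a compact neighborhood of $\BFx_T$, I would derive
\[
r(\BFx_\tau^*+\vct\Delta_\tau)-r(\BFx_\tau^c) \le \langle \nabla r(\BFx_T),\vct\alpha_\tau\rangle - \langle \nabla^2 r(\BFx_T)\bar{\vct\xi}^r_{\to\tau},\vct\alpha_\tau\rangle + \tfrac{M'}{2}\|\bar{\vct\xi}^r_{\to\tau}\|^2\|\vct\alpha_\tau\| - \tfrac{m'}{2}\|\vct\alpha_\tau\|^2.
\]
Summing over $\tau\ge T^\sharp$ and taking expectations, the linear-in-$\vct\alpha_\tau$ contributions telescope and vanish by the optional stopping theorem (applied as in the single-product proof to $\mathcal A$, $\mathcal B$); the $\nabla^2 r(\BFx_T)$ cross-term is controlled by coupling $\vct\xi^*_\tau$ and $\vct\xi^r_\tau$ through the joint distribution $\Xi_\tau$ with $\sqrt{\E[\|\vct\xi^*_\tau-\vct\xi^r_\tau\|^2\mid\mathcal F_{\tau+1}]}\le L\|\BFx_\tau^*+\vct\Delta_\tau-\BFx_\tau^c\|$ (Assumption (B5)); the quartic residual is bounded via Doob's submartingale inequality applied to $\{\|\bar{\vct\xi}^r_{\to\tau}\|^4\}$, together with the elementary estimate $\E[\|\bar{\vct\xi}^r_{\to\tau}\|^4]=O(\tau^{-2})$.

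The main obstacle I anticipate is the boundary term at $\tau=T^\sharp-1$, where $\BFx_{T^\sharp-1}^r$ may leave $\mathcal S_\mathcal I$ and the active set may change, producing an analog of the $\eta,\eta^*,\eta^r$ corrections in the single-product proof. The resolution is to introduce vector-valued corrections, one supported on $\mathcal I$ (capturing the possibility that an $\mathcal I$-constraint becomes inactive) and one supported on $\mathcal U$ (capturing the possibility that a $\mathcal U$-constraint becomes active), and to complete the square against the quadratic form $\tfrac12\langle \nabla^2 r(\BFx_T)\cdot,\cdot\rangle$. Assumption (C1) is essential here: it guarantees a strictly positive margin $\delta_0$ on both sides of each active constraint, which translates into strict positivity of the Lagrange multipliers on $\mathcal I$ at $\BFx_T^\Fluid$ and thus ensures that the vector $\nabla r(\BFx_T)-\nabla^2 r(\BFx_T)\bar{\vct\xi}^r_{\to T^\sharp-1}$ remains in the correct cone, so that the completing-the-square argument produces a clean $\tfrac{M'^2}{8m'}\|\bar{\vct\xi}^r_{\to T^\sharp-1}\|^4$ remainder that is absorbed into the $O(1)$ bound via $\E[(T^\sharp-1)\|\bar{\vct\xi}^r_{\to T^\sharp-1}\|^4]=O(1)$ and $r^u\,\E[T^\sharp-1]=O(1)$.
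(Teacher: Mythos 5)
Your overall architecture (a stopping time with $\E[T^\sharp]=O(1)$, the analogue of Lemma~\ref{lem:ft-star-expansion}, a second-order expansion feeding a four-term decomposition, and boundary corrections at $\tau=T^\sharp-1$) matches the paper's. But there is a genuine gap at the central step: you Taylor-expand the full $n$-dimensional revenue $r$ and take the displacement to be $\vct\alpha_\tau=\vct\Delta_\tau-\bar{\vct\Delta}_{\to\tau}+\bar{\vct\xi}^{\delta}_{\to\tau}$ in all coordinates. The identity $(\BFx_\tau^*+\vct\Delta_\tau)-\BFx_\tau^c=\vct\Delta_\tau-\bar{\vct\Delta}_{\to\tau}+\bar{\vct\xi}^{\delta}_{\to\tau}$ holds only on $\mathcal I$. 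On $\mathcal U$ the re-solving heuristic's demand rate is the partial optimizer (your implicit-function map), not $\BFx_\tau^r(\mathcal U)$, and the DP's rate $\BFx_\tau^*(\mathcal U)+\vct\Delta_\tau(\mathcal U)$ is an arbitrary adapted process whose inventory recursion does not reduce to a harmonic noise series (the $\mathcal U$-inventories under $\pi^*$ can themselves bind, since they are \emph{not} unbounded in the original problem). Consequently the telescoping identities behind $\mathcal A$ and $\mathcal B$ and the optional-stopping cancellations fail on the $\mathcal U$-block, the first-order term $\langle\nabla r,\cdot\rangle$ does not cancel there ($\nabla_{\mathcal U}r$ vanishes only at the partial optimizer), and the cross-block Hessian $\mat H_{\mathcal I\times\mathcal U}$ couples the uncontrolled $\mathcal U$-displacement into the $\mathcal I$-analysis.

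The paper's fix, which your proposal is missing, is to eliminate the $\mathcal U$-coordinates \emph{before} expanding: define $R(\BFz)=\max\{r(\BFx):\BFx(\mathcal I)=\BFz\}$, upper-bound $\phi_T^*(\BFx_T)$ by the value of a relaxed problem with unbounded $\mathcal U$-inventory so that $r(\BFx_\tau^*+\vct\Delta_\tau)\leq R(\BFz_\tau^*+\vct\Delta_\tau(\mathcal I))$ while $r(\BFx_\tau^c)=R(\BFz_\tau^r)$, and then run the single-product argument verbatim on the $|\mathcal I|$-dimensional function $R$. This requires establishing strong concavity and third-order smoothness of $R$ (via the envelope theorem and the Schur complement $\mat H_{\mathcal I\times\mathcal I}-\mat H_{\mathcal I\times\mathcal U}\mat H_{\mathcal U\times\mathcal U}^{-1}\mat H_{\mathcal U\times\mathcal I}$, plus a $C^2$ implicit-function argument), together with Lipschitz continuity of the partial optimizer so the Wasserstein coupling bound $L\|\BFx_t^*+\vct\Delta_t-\BFx_t^c\|$ can be converted to the $\mathcal I$-coordinates. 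If you carry out your direct $n$-dimensional expansion and then optimize out the $\mathcal U$-displacement, you are forced to re-derive exactly this Schur-complement structure; as written, your proposal asserts the single-product cancellations in coordinates where they do not hold.
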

In the rest of this section we prove Theorem \ref{thm:upper-bound-multi-product}.

\subsection{Partial Optimization on a Subset of Products}

For any vector $\BFx\in\mathbb R^n$ and subset $\mathcal S\subset [n]$, denote $\BFx(\mathcal S)=(\BFx(k): k\in\mathcal S)$ as the $|\mathcal S|$-dimensional sub-vector of $\BFx$ whose coordinates are restricted to the subset $\mathcal S$.
The following observation follows immediately from the definition of $\mathcal{S}_{\mathcal{I}}$ and the proof is omitted.
\begin{lemma}\label{lem:sub-regions}
Suppose $\BFx \in \mathcal{S}_{\mathcal{I}}$ for some $\mathcal{I} \subset [n]$. 
Let $\mathcal{U} = [n] \setminus \mathcal{I}$.
For any inventory vector $\BFx' \in \mathbb{R}^n_+$ with $\BFx'(\mathcal{I}) = \BFx(\mathcal{I})$ and 
$\BFx(\mathcal{U})' \geq \BFx(\mathcal{U})$, we have $\BFx' \in \mathcal{S}_{\mathcal{I}}$.
\end{lemma}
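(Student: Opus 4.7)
The plan is to reduce the lemma to the assertion that the optimum of the fluid program with right-hand side $\BFx$ is still the optimum when the right-hand side is raised to $\BFx'$. Once that is established, the active set for the new program can be read off from the coordinates of this common optimum and will coincide with $\mathcal{I}$, placing $\BFx'$ in $\mathcal{S}_{\mathcal{I}}$.

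First I would verify that the original optimum $\BFx^c$ (corresponding to right-hand side $\BFx$) is feasible for the new fluid program. By the definitions of $\mathcal{I}$ and $\mathcal{U}$ we have $\BFx^c(k)=\BFx(k)$ for $k\in\mathcal{I}$ and $\BFx^c(k)<\BFx(k)$ for $k\in\mathcal{U}$. The hypotheses $\BFx'(\mathcal{I})=\BFx(\mathcal{I})$ and $\BFx'(\mathcal{U})\geq\BFx(\mathcal{U})$ then give $\BFx^c\leq\BFx'$ componentwise, and $\BFx^c\in\mathcal{D}$ is inherited from the original problem.

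Next I would invoke the KKT conditions at $\BFx^c$ for the original program. With inventory multipliers $\vct\lambda\geq\BFzero$ attached to $\BFz\leq\BFx$ (and any multipliers needed for $\BFz\geq\BFzero$ or the boundary of $\mathcal{D}$), stationarity and complementary slackness force $\lambda_k=0$ for $k\in\mathcal{U}$ because the inventory constraint is strictly slack on $\mathcal{U}$. The key persistence observation is that exactly the same multipliers satisfy the KKT conditions for the new program with right-hand side $\BFx'$: on $\mathcal{I}$ the constraint $\BFz(k)\leq\BFx'(k)$ remains tight at $\BFx^c$ (since $\BFx'(\mathcal{I})=\BFx(\mathcal{I})$), and on $\mathcal{U}$ the constraint remains slack while the associated multiplier is already zero, so stationarity is unchanged. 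Strict concavity of $r$ on the convex feasible set (Assumption (B2)) then upgrades KKT to sufficiency and forces uniqueness, yielding $\BFx'^c=\BFx^c$.

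Reading off the active coordinates completes the proof: for $k\in\mathcal{I}$ we have $\BFx'^c(k)=\BFx^c(k)=\BFx(k)=\BFx'(k)$ so the inventory constraint is tight at $\BFx'^c$, while for $k\in\mathcal{U}$ we have $\BFx'^c(k)=\BFx^c(k)<\BFx(k)\leq\BFx'(k)$ so it is slack. Hence $\mathcal{I}(\BFx')=\mathcal{I}$ and by definition $\BFx'\in\mathcal{S}_{\mathcal{I}}$. I do not anticipate a real obstacle; the only step requiring care is the KKT bookkeeping in the degenerate case where some $\BFx^c(k)=0$ on $\mathcal{U}$, which merely introduces a nonnegativity multiplier that is inert under the right-hand-side perturbation and so does not affect the persistence of optimality.
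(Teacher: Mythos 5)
Your proof is correct. Note that the paper itself gives no argument for this lemma---it is stated as following ``immediately from the definition of $\mathcal{S}_{\mathcal{I}}$'' with the proof omitted---so there is nothing to compare against line by line; what you have written is a legitimate filling-in of that gap. Your route (feasibility of the old optimizer $\BFx^c$ for the enlarged program, persistence of the KKT multipliers because the inventory constraints on $\mathcal{I}$ stay tight while those on $\mathcal{U}$ stay slack with zero multipliers, then sufficiency and uniqueness from strict concavity) is sound, and it matches the KKT/saddle-point machinery the paper uses for the neighboring Lemma on the properties of $R(\BFz)$, including the same implicit reliance on Slater's condition to get multipliers in the first place. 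One small remark: you can sidestep the constraint-qualification bookkeeping entirely with a purely variational argument. Since $\BFx'\geq\BFx$ componentwise, the new feasible set contains the old one; if some $\BFz'$ feasible for $\BFx'$ had $r(\BFz')>r(\BFx^c)$, then for small $\epsilon>0$ the point $(1-\epsilon)\BFx^c+\epsilon\BFz'$ would be feasible for the original program (the $\mathcal{I}$-constraints hold because both endpoints satisfy them with the same bound, and the $\mathcal{U}$-constraints hold because $\BFx^c$ satisfies them strictly) and, by concavity, would beat $\BFx^c$ there---a contradiction. This yields $\BFx'^c=\BFx^c$ by strict concavity without ever invoking multipliers, and the identification of the active set then proceeds exactly as in your last paragraph.
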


Given a fixed $\BFx_T\in \mathcal{S}_{\mathcal{I}}$
with $\mathcal I\subset [n]$, let $\mathrm{proj}_{\mathcal I}(\mathcal{D})$
be the projection of the domain $\mathcal{D}$ onto the dimensions in $\mathcal{I}$.
For every $\BFz\in \mathrm{proj}_{\mathcal I}(\mathcal{D})$, define $R(\BFz)$ as the value of partial optimization of $r(\BFx)$ by changing the variables in $\mathcal{U} = [n] \setminus \mathcal{I}$ while fixing the variables in $\mathcal{I}$:
\begin{equation}
R(\BFz) := \max_{\BFx \in \mathcal{D}} \{r(\BFx),\ \mathrm{s.t.}\ \BFx(\mathcal I) = \BFz\}.
\label{eq:defn-R}
\end{equation}
The motivation for Eq.~(\ref{eq:defn-R}) is that in the region $\mathcal{S}_{\mathcal{I}}$, the solution of the re-solving heuristic is only affected by the inventory levels of the \emph{constrained products} in $\mathcal{I}$.
The following lemma establishes some useful properties of the function $R$.
\begin{lemma}
The function $R$ has the following properties:
\begin{enumerate}
    \item For any $\BFx_t \in \mathcal{S}_{\mathcal{I}}$,
    let $\BFz_t = \BFx(\mathcal{I})$.
    It holds that
    $R(\BFz_t) = \max_{\BFx \in \mathcal{D}}\{r(\BFx),\ \mathrm{s.t.}\ \BFzero \leq \BFx \leq \BFx_t \} = r(\BFx^c_t)$. 
    \item $R(\BFz)$ is strictly concave and three times continuously differentiable for all $\BFz\in \mathrm{proj}_{\mathcal{I}}(\mathcal{D})$
and satisfies 
$\nabla^2 R(\BFz)\preceq -m\mat I$, $\|\nabla^3 R(\BFz)\|_{\op}\leq M$ with some constants $m>0, M>0$.
\item Given $\BFz, \BFz'\in \mathrm{proj}_{\mathcal{I}}(\mathcal{D})$, let 
$\BFx, \BFx'$ be the optimal solutions in \eqref{eq:defn-R}.
Then $\|\BFx - \BFx'\|_{} \leq L_z \|\BFz - \BFz'\|_{}$ 
for some constant $L_z>0$
uniformly on $\mathrm{proj}_{\mathcal{I}}(\mathcal{D})$.
\end{enumerate}
\label{lem:properties-R}
\end{lemma}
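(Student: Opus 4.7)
My plan is to establish the three properties in sequence, leveraging the interiority guaranteed by Assumption (C1) together with the strict concavity and smoothness of $r$.

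For part 1, I would argue that because $\BFx_t\in\mathcal S_{\mathcal I}$, the solution $\BFx_t^c$ to the re-solving fluid problem has active upper-bound constraints \emph{exactly} on $\mathcal I$ (by definition of the partition in \eqref{eq:set-partition}): $\BFx_t^c(\mathcal I)=\BFx_t(\mathcal I)=\BFz_t$ and $\BFx_t^c(\mathcal U)<\BFx_t(\mathcal U)$. Thus removing the (slack) upper-bound constraints on $\mathcal U$ does not change the optimal value, so $r(\BFx_t^c)$ equals the value of the partial-optimization problem defining $R(\BFz_t)$. Strict concavity (B2) ensures that the optimizer of $R(\BFz_t)$ is unique and coincides with $\BFx_t^c$.

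For part 2, the key tool is the implicit function theorem. Writing $\BFx=(\BFz,\BFu)$ with $\BFu=\BFx(\mathcal U)$, the interior first-order condition for the inner maximization is $F(\BFz,\BFu):=\nabla_{\BFu}r(\BFz,\BFu)=\mathbf 0$. By Assumption (B2), $\nabla^2_{\BFu\BFu}r\preceq -m'\mat I$ is invertible, so IFT yields a $C^2$ map $\BFu=\BFu^*(\BFz)$ (since $r\in C^3$ by (B3)). By the envelope theorem $\nabla R(\BFz)=\nabla_{\BFz}r(\BFz,\BFu^*(\BFz))$, which is again $C^2$, giving $R\in C^3$. The uniform bound $\|\nabla^3 R\|_{\op}\le M$ follows from the Schur-complement identity
\[
\nabla^2 R(\BFz)=\nabla^2_{\BFz\BFz}r-\nabla^2_{\BFz\BFu}r\,(\nabla^2_{\BFu\BFu}r)^{-1}\nabla^2_{\BFu\BFz}r,
\]
differentiated once more: every factor is a bounded continuous function on a compact neighborhood of $\BFx_T$, and $\|(\nabla^2_{\BFu\BFu}r)^{-1}\|_2\le 1/m'$. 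For the strong-concavity bound, I would bypass the Schur calculus and argue directly: for any $\BFz,\BFz'$ and $\alpha\in(0,1)$, setting $\BFu_\alpha=\alpha\BFu^*(\BFz)+(1-\alpha)\BFu^*(\BFz')$,
\begin{align*}
R(\alpha\BFz+(1-\alpha)\BFz')&\ge r\bigl(\alpha\BFz+(1-\alpha)\BFz',\,\BFu_\alpha\bigr)\\
&\ge \alpha\,r(\BFz,\BFu^*(\BFz))+(1-\alpha)\,r(\BFz',\BFu^*(\BFz'))+\tfrac{m'}{2}\alpha(1-\alpha)\|\BFz-\BFz'\|^2,
\end{align*}
where the second line uses $m'$-strong concavity of $r$; this yields $\nabla^2 R\preceq -m'\mat I$, so $m=m'$.

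For part 3, I would combine two strong-concavity inequalities. Since $\BFu^*(\BFz)$ maximizes $\BFu\mapsto r(\BFz,\BFu)$, (B2) gives $r(\BFz,\BFu^*(\BFz))-r(\BFz,\BFu^*(\BFz'))\ge \tfrac{m'}{2}\|\BFu^*(\BFz)-\BFu^*(\BFz')\|^2$, and symmetrically at $\BFz'$. Adding them,
\[
\bigl[r(\BFz,\BFu^*(\BFz))-r(\BFz',\BFu^*(\BFz))\bigr]-\bigl[r(\BFz,\BFu^*(\BFz'))-r(\BFz',\BFu^*(\BFz'))\bigr]\ \ge\ m'\|\BFu^*(\BFz)-\BFu^*(\BFz')\|^2.
\]
The left side is bounded above by $L'\|\BFz-\BFz'\|\cdot\|\BFu^*(\BFz)-\BFu^*(\BFz')\|$ via the mean value theorem applied to the mixed second derivative $\nabla^2_{\BFu\BFz}r$ (bounded on the compact domain by (B3)), giving $\|\BFu^*(\BFz)-\BFu^*(\BFz')\|\le (L'/m')\|\BFz-\BFz'\|$. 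Since $\BFx(\BFz)=(\BFz,\BFu^*(\BFz))$, the claimed Lipschitz constant is $L_z=\sqrt{1+(L'/m')^2}$.

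The main obstacle will be ensuring that the partial maximizer $\BFu^*(\BFz)$ stays in the (relative) interior of $\mathcal D$ so IFT is applicable and no boundary multipliers appear. This is precisely where Assumption (C1) and the interiority of the unconstrained maximizer in (B2) are needed: restricting to a small neighborhood of $\BFz_T=\BFx_T(\mathcal I)$ ensures $\BFx^*(\BFz)$ remains in the interior of $\mathcal D$ relative to the $\mathcal U$-directions, so all arguments above apply uniformly on a neighborhood of $\BFz_T$ (which is what is actually used in the subsequent regret analysis).
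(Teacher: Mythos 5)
Your treatment of parts 1 and 2 follows essentially the same route as the paper: part 1 is the observation that the upper-bound constraints are active exactly on $\mathcal I$ so the slack constraints on $\mathcal U$ can be dropped, and part 2 rests on the implicit function theorem applied to $\nabla_{\BFu}r(\BFz,\BFu^*(\BFz))=\BFzero$, the envelope theorem giving $\nabla R(\BFz)=\nabla_{\mathcal I}r(\BFz,\BFu^*(\BFz))$, and the Schur-complement identity for $\nabla^2 R$. (The paper phrases the envelope step via the Lagrangian saddle point and cites that the Schur complement of a negative definite matrix is negative definite; your direct interpolation argument $R(\alpha\BFz+(1-\alpha)\BFz')\ge r(\alpha\BFz+(1-\alpha)\BFz',\BFu_\alpha)\ge\cdots$ is a clean alternative for the concavity modulus --- it uses only convexity of $\mathcal D$ and pins down $m=m'$ explicitly, which the paper leaves implicit.) Your closing remark about interiority is also well taken: the paper asserts the properties on all of $\mathrm{proj}_{\mathcal I}(\mathcal D)$ without addressing boundary maximizers, whereas only a neighborhood of $\BFz_T$ is needed downstream.

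Part 3 is where you genuinely diverge, and where there is a gap. The paper simply notes that $\BFx^*(\BFz)=(\BFz,\BFu^*(\BFz))$ is continuously differentiable on the compact set $\mathrm{proj}_{\mathcal I}(\mathcal D)$, hence Lipschitz. Your strong-concavity argument instead needs the inequality $r(\BFz,\BFu^*(\BFz))-r(\BFz,\BFu^*(\BFz'))\ge\tfrac{m'}{2}\|\BFu^*(\BFz)-\BFu^*(\BFz')\|^2$, which presupposes that the point $(\BFz,\BFu^*(\BFz'))$ lies in $\mathcal D$ (so that $r$ is defined and concave along the segment joining it to $(\BFz,\BFu^*(\BFz))$, and so that first-order optimality of $\BFu^*(\BFz)$ applies against it). Since $\mathcal D$ is merely a convex compact set and not a product of an $\mathcal I$-block with a $\mathcal U$-block, this cross-feasibility can fail for $\BFz\neq\BFz'$. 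The argument can be repaired by restricting $\BFz,\BFz'$ to a neighborhood of $\BFz_T$ on which the inner maximizers are uniformly interior to $\mathcal D$ (which suffices for the regret analysis), or by reverting to the paper's $C^1$-plus-compactness argument; as written, though, the claimed uniform Lipschitz bound on all of $\mathrm{proj}_{\mathcal I}(\mathcal D)$ does not follow.
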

{
\begin{proof}{Proof of Lemma \ref{lem:properties-R}.}
The first property follows immediately from the definition of 
$R(\BFz)$ in Eq.~\eqref{eq:defn-R} and the definition of 
$\mathcal{S}_{\mathcal{I}}$ in Eq.~\eqref{eq:defn-SI}.

To prove the second property,
consider the Lagrangian $\mathcal{L}(\BFx, \BFlambda, \BFz) := r(\BFx) + \BFlambda^\top (\BFz - \BFx(\mathcal{I}))$ of \eqref{eq:defn-R}.
Since $r(\BFx)$ is concave, $\mathcal{D}$ is a convex set with nonempty interior, and the optimization problem \eqref{eq:defn-R} satisfies Slater's condition, $\mathcal{L}$ has a saddle point $(\BFx^*(\BFz), \BFlambda^*(\BFz))$ satisfying
\[
R(\BFz) = \max_{\BFx \in \mathcal{D}} \min_{\BFlambda \in \R^{|\mathcal{I}|}} \mathcal{L}(\BFx, \BFlambda, \BFz) = \min_{\BFlambda \in \R^{|\mathcal{I}|}} \max_{\BFx \in \mathcal{D}} \mathcal{L}(\BFx, \BFlambda, \BFz) = \mathcal{L}(\BFx^*(\BFz), \BFlambda^*(\BFz), \BFz).
\]
Note that the saddle point $(\BFx^*(\BFz), \BFlambda^*(\BFz))$ is unique because $r(\BFx)$ is strictly convex.
By the envelop theorem for saddle point problems \citep[Theorem 5]{milgrom2002envelope}, when the saddle point is unique for every $\BFz$, the function $R(\BFz)$ is differentiable in the interior of $\mathrm{proj}_{\mathcal{I}}(\mathcal{D})$ with 
$
    \nabla R(\BFz) = \lambda^*(\BFz).
$
By the KKT conditions,  $\nabla_{\mathcal{I}}r(\BFx^*(\BFz)) = \lambda^*(\BFz)$, so
\begin{equation}\label{eq:KKT-1}
   \nabla R(\BFz) = \nabla_{\mathcal{I}}r(\BFx^*(\BFz)). 
\end{equation}
(Here $\nabla_{\mathcal{S}}r(\BFx)$ denotes the restriction of $\nabla r(\BFx)$ to a subset $\mathcal{S} \in [n]$.)

Denote the Hessian of $r(\BFx)$ at the point $\BFx^*(\BFz)$ by
\[
   \nabla^2 r(\BFx^*(\BFz)) = \mat H = \begin{bmatrix}
\mat H_{\mathcal{I}\times \mathcal{I}} & \mat H_{\mathcal{I}\times \mathcal{U}}\\
\mat H_{\mathcal{U}\times \mathcal{I}} & \mat H_{\mathcal{U}\times \mathcal{U}}.
\end{bmatrix}
\]
Let $\BFx^*(\BFz) = (\BFz, \BFu^*(\BFz))$.
By the KKT conditions again, we have $\nabla_{\mathcal{U}}r(\BFz, \BFu^*(\BFz))=\BFzero$. By the implicit function theorem,  $\BFu^*(\BFz)$ is continuously differentiable
and the Jacobian matrix of $\BFu^*(\BFz)$ is $\mat J_{\BFu}(\BFz) = - \mat H_{\mathcal{U}\times \mathcal{U}}^{-1}\mat H_{\mathcal{U}\times \mathcal{I}}$. Note that $\mat H_{\mathcal{U}\times \mathcal{U}}$ is invertible because $\mat H$ is negative definite by Assumption (B2). Using Eq.~\eqref{eq:KKT-1} and the chain rule, we have
\begin{equation}\label{eq:schur-complement}
\nabla^2 R(\BFz) = \nabla^2_{\mathcal{I}\times \mathcal{I}}r(\BFz, \BFu^*(\BFz)) + \nabla^2_{\mathcal{I}\times \mathcal{U}}r(\BFz, \BFu^*(\BFz))\cdot J_{\BFu}(\BFz)
= \mat{H}_{\mathcal{I}\times \mathcal{I}} - \mat H_{\mathcal{I}\times \mathcal{U}}
\mat{H}_{\mathcal{U}\times \mathcal{U}}^{-1}\mat H_{\mathcal{U}\times \mathcal{I}}. 
\end{equation}
The right-hand side of Eq.~\eqref{eq:schur-complement} is the Schur complement of the block $\mat{H}_{\mathcal{U}\times \mathcal{U}}$ in the matrix $\mat{H}$.
Since $\mat H$ is negative definite by Assumption (B2), the Schur complement is also negative definite \citep[Theorem 1.12]{zhang2006schur}, which implies that $R(\BFz)$ is strictly concave.

Next, we establish the smoothness condition of $R(\BFz)$.
Recall that $\nabla_{\mathcal{U}}r(\BFz, \BFu^*(\BFz))=\BFzero$, the Jacobian matrix $J_{\BFu}(\BFz)$ is invertible, and $\nabla r$ is twice continuously differentiable by Assumption (B3). By the implicit function theorem for $C^2$ class \citep[Theorem 3.3.1]{krantz2012implicit}, $\BFu^*(\BFz)$ is also twice continuously differentiable.
By Eq.~\eqref{eq:schur-complement}, $R(\BFz)$ is three times countinuously diffrentiable.
Because the domain $\mathcal{D}$ is compact by Assumption (B1), $\nabla^3 R(\BFz)$ is uniformly bounded in $\mathrm{proj}_{\mathcal{I}}(\mathcal{D})$.

Finally, the third statement of the lemma regarding the Lipschitz continuity of $\BFx^*(\BFz)$ is straightforward, because the domain $\mathrm{proj}_{\mathcal{I}}(\mathcal{D})$ is compact and we have shown that $\BFx^*(\BFz) = (\BFz, \BFu^*(\BFz))$ is continuously differentiable in $\BFz$.
$\square$

\end{proof}
}

\subsection{Stopping Time with Bounded Expectation}

Recall that $\{\vct\xi_\tau^r\}_{\tau=1}^T$ are the stochastic demand noise vectors at each time period on the path of the re-solving heuristic policy $\pi^r$.
For any $\tau$, define $\bar{\vct\xi}_{\to\tau}^r := \frac{\vct\xi_T^r}{T-1}+\cdots+\frac{\vct\xi_{\tau+1}^r}{\tau}$. 
Let $r_0 := \min_{k\in \mathcal{I}} \partial R(\BFx_T(\mathcal{I})) / \partial x_T(k)$.
Note that $r_0 >0$ by Assumption (C1).
Define $T^\sharp$ as
\begin{equation}
    T^\sharp := \max \left\{\tau \geq 1:\;\; \|\vct{\bar\xi}_{\to{\tau-1}}^r\|_{} > \min\big(\delta_0, r_0 / \|\nabla^2 R(\BFx_T(\mathcal{I})) \|_{2} \big) \right\} \vee 2,
    \label{eq:defn-Tsharp-multi-product}
\end{equation}
where $\delta_0>0$ is the constant parameter in Assumption (C1). Because $\bar\BFxi_{\to{t-1}}^r$ is measurable with respect to $F_t$, the event $\{T^\sharp=t\}$ is adaptive to the filtration $\{\mathcal F_t\}$ and therefore $T^\sharp$ is a stopping time.  We remark that $\|\vct{\bar\xi}_{\to{\tau-1}}^r\|_{} \leq r_0 / \|\nabla^2 R(\BFx_T(\mathcal{I})) \|_{2}$ implies the inequality 
$\nabla R(\BFx_T(\mathcal{I})) - \nabla^2 R(\BFx_T(\mathcal{I})) \bar\BFxi_{\to{\tau-1}}^r \geq 0$. We will use this fact later in the proof of Theorem~\ref{lem:T-sharp-multi-product}.

Recall that $\BFx_\tau^r$ is the normalized inventory vector when $\tau$ time periods are remaining. The following lemma gives characterization of $\BFx_\tau^r$ in terms of $\vct{\bar\xi}_{\to\tau}^r$. It also gives an upper bound on $\E[T^\sharp]$, similar to Lemma \ref{lem:T-sharp-ub}.
\begin{lemma}
Under Assumptions (B1)-(B5) and (C1), the following holds for all $\tau\geq T^\sharp-1$:
\begin{align}
    \BFx_\tau^r(\mathcal I) =&\ \BFx_T^r(\mathcal I) - \vct{\bar\xi}_{\to\tau}^r(\mathcal I); \label{eq:induction-xr}\\
    \BFx_\tau^r(\mathcal U) \geq&\ \BFx_T^r(\mathcal U) - \vct{\bar\xi}_{\to\tau}^r(\mathcal U),\nonumber
\end{align}
where $\mathcal I=\mathcal I(\BFx_T)$, $\mathcal U=\mathcal U(\BFx_T)$ as defined in Assumption (C1). For $\tau\geq T^\sharp$, we have $\BFx_\tau^r \in \mathcal{S}_\mathcal{I}$.  Furthermore, the stopping time is bounded by
$
\E[T^\sharp] = O(1).
$
\label{lem:T-sharp-multi-product}
\end{lemma}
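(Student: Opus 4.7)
The plan is to prove the two inventory formulas and the set-membership claim by a simultaneous backward induction on $\tau$ from $T$ down to $T^\sharp$, and then to derive $\E[T^\sharp]=O(1)$ via a Doob-type argument that essentially mirrors Lemma~\ref{lem:T-sharp-ub}.

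The base case $\tau=T$ is immediate: $\BFx_T^r=\BFx_T$ and $\vct{\bar\xi}_{\to T}^r=\BFzero$, so the equality on $\mathcal I$ and the inequality on $\mathcal U$ hold, and $\BFx_T\in\mathcal S_{\mathcal I}$ by Assumption~(C1). For the inductive step, suppose the two formulas hold at $\tau$ and that $\BFx_\tau^r\in\mathcal S_{\mathcal I}$. By the definition of $\mathcal S_{\mathcal I}$, the re-solving optimum $\BFx_\tau^c$ satisfies $\BFx_\tau^c(\mathcal I)=\BFx_\tau^r(\mathcal I)$ and $\BFx_\tau^c(\mathcal U)<\BFx_\tau^r(\mathcal U)$ component-wise. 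Substituting into the Bellman recursion~\eqref{eq:bellman-r-multi-product} yields
\[
\BFx_{\tau-1}^r(\mathcal I)=\BFx_\tau^r(\mathcal I)-\vct\xi_\tau^r(\mathcal I)/(\tau-1),\qquad \BFx_{\tau-1}^r(\mathcal U)\ge\BFx_\tau^r(\mathcal U)-\vct\xi_\tau^r(\mathcal U)/(\tau-1),
\]
and combining with the inductive hypothesis and $\vct{\bar\xi}_{\to\tau-1}^r=\vct{\bar\xi}_{\to\tau}^r+\vct\xi_\tau^r/(\tau-1)$ gives the two formulas at $\tau-1$.

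To propagate the set-membership $\BFx_{\tau-1}^r\in\mathcal S_{\mathcal I}$, I would invoke Lemma~\ref{lem:sub-regions}. Let $\tilde\BFx:=\BFx_T-\vct{\bar\xi}_{\to\tau-1}^r$. If $\|\vct{\bar\xi}_{\to\tau-1}^r\|\le\delta_0$, Assumption~(C1) places $\tilde\BFx$ in the neighborhood $B_{\delta_0}(\BFx_T)\subset\mathcal S_{\mathcal I}$. The two formulas just derived imply $\BFx_{\tau-1}^r(\mathcal I)=\tilde\BFx(\mathcal I)$ and $\BFx_{\tau-1}^r(\mathcal U)\ge\tilde\BFx(\mathcal U)$, so Lemma~\ref{lem:sub-regions} gives $\BFx_{\tau-1}^r\in\mathcal S_{\mathcal I}$. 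The stopping time~\eqref{eq:defn-Tsharp-multi-product} is calibrated precisely so that $\|\vct{\bar\xi}_{\to s}^r\|\le\delta_0$ for every $s\ge T^\sharp$, which ensures that the induction step propagates from $\tau=T$ down to $\tau=T^\sharp$, yielding $\BFx_\tau^r\in\mathcal S_{\mathcal I}$ for $\tau\ge T^\sharp$ and, via one additional application of the recursion above, the two inventory formulas for all $\tau\ge T^\sharp-1$.

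For the bound $\E[T^\sharp]=O(1)$ I would rerun the argument of Lemma~\ref{lem:T-sharp-ub} in the vector setting. By Assumption~(B4), $\{\vct{\bar\xi}_{\to t}^r\}$ is a vector-valued martingale when indexed in reverse time, so $\{\|\vct{\bar\xi}_{\to t}^r\|^4\}$ is a nonnegative submartingale. Writing $\gamma':=\min(\delta_0,\,r_0/\|\nabla^2 R(\BFx_T(\mathcal I))\|_2)$, Doob's maximal inequality gives
\[
\Pr[T^\sharp\ge\tau]\;\le\;\Pr\!\Bigl[\sup_{t\ge\tau-1}\|\vct{\bar\xi}_{\to t}^r\|>\gamma'\Bigr]\;\le\;\gamma'^{-4}\,\E\bigl[\|\vct{\bar\xi}_{\to\tau-1}^r\|^4\bigr].
\]
Expanding the fourth moment, the orthogonality $\E[\vct\xi_j^r(\vct\xi_k^r)^{\!\top}]=\BFzero$ for $j\ne k$ together with $\|\vct\xi_t^r\|\le B_\xi$ a.s.\ yields $\E[\|\vct{\bar\xi}_{\to\tau-1}^r\|^4]\le 4B_\xi^4/(\tau-1)^2$, exactly as in the single-product case, and summing over $\tau$ gives $\E[T^\sharp]\le 2+O(B_\xi^4/\gamma'^4)=O(1)$.

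The main obstacle is the bookkeeping inside the induction: I must carry three coupled statements forward (equality on $\mathcal I$, inequality on $\mathcal U$, and set-membership $\BFx_\tau^r\in\mathcal S_{\mathcal I}$) so that each clause feeds the next, and in particular verify that the inequality on $\mathcal U$ points the correct direction for Lemma~\ref{lem:sub-regions} to apply. The vector-valued $\ell^4$ moment bound is a routine analogue of the scalar calculation and should not cause any real difficulty.
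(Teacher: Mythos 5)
Your proposal is correct and follows essentially the same route as the paper: a backward induction that couples the two inventory formulas with the membership $\BFx_\tau^r\in\mathcal S_{\mathcal I}$ (using Lemma~\ref{lem:sub-regions} to pass from the point $\BFx_T-\vct{\bar\xi}_{\to\tau}^r\in B_{\delta_0}(\BFx_T)$ to $\BFx_\tau^r$), followed by the same Doob fourth-moment argument as Lemma~\ref{lem:T-sharp-ub} for $\E[T^\sharp]=O(1)$. Your explicit introduction of $\tilde\BFx$ merely spells out a step the paper leaves implicit; there is no substantive difference.
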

\begin{proof}{Proof of Lemma \ref{lem:T-sharp-multi-product}.}
We first prove Eq.~\eqref{eq:induction-xr} by induction.
The base case of $\tau=T$ clearly holds because $\BFx_\tau = \BFx_T$, which belongs to $\mathcal{S}_\mathcal{I}$ by Assumption (C1).
Suppose Eq.~\eqref{eq:induction-xr} holds at period $\tau$. Because $\tau\geq T^\sharp$, it holds that $\|\bar{\vct\xi}_{\to\tau}^r\|_{}\leq\delta_0$ and therefore $\BFx_\tau^r\in \mathcal{S}_{\mathcal{I}}$ by Assumption (C1) and Lemma~\ref{lem:sub-regions}. Let $\BFx_\tau^{c} = \arg\max_{\BFx \in \mathcal{D}}\{r(\BFx),\ \mathrm{s.t.}\ \BFx \leq \BFx_\tau^r\}$ be the demand rate chosen by the re-solving heuristic at period $\tau$. Since $\BFx_\tau^r\in \mathcal{S}_{\mathcal{I}}$, we know that $\BFx_\tau^{c}(\mathcal I)=\BFx_\tau^r(\mathcal I)$ and $\BFx_\tau^{c}(\mathcal U)<\BFx_\tau^r(\mathcal U)$. Subsequently, 
\begin{align*}
\BFx_{\tau-1}^r(\mathcal I) &= \BFx_\tau^r(\mathcal I) - \frac{\BFx_\tau^{c}(\mathcal I)-\BFx_\tau^r(\mathcal I)+\vct\xi_\tau^r(\mathcal I)}{\tau-1}
= \BFx_\tau^r(\mathcal I) - \frac{\vct\xi_\tau^r(\mathcal I)}{\tau-1}
=\BFx_T(\mathcal I) - \bar{\vct\xi}_{\to\tau-1}^r(\mathcal I);\\
\BFx_{\tau-1}^r(\mathcal U) &= \BFx_\tau^r(\mathcal U) - \frac{\BFx_\tau^{c}(\mathcal U)-\BFx_\tau^r(\mathcal U)+\vct\xi_\tau^r(\mathcal U)}{\tau-1}
\geq \BFx_\tau^r(\mathcal U) - \frac{\vct\xi_\tau^r(\mathcal U)}{\tau-1} \geq \BFx_T(\mathcal U) - \bar{\vct\xi}_{\to\tau-1}^r(\mathcal U).
\end{align*}

Next, we prove the upper bound on $\mathbb E[T^\sharp]$. By Assumption (B4), $\{\|\bar\BFxi^r_{\to\tau}\|_{}^2\}$ is a submartingale and $\|\BFxi^r_\tau\|_{} \leq B_\xi\ a.s$. Using the identical proof by Doob's martingale inequality  in Lemma~\ref{lem:T-sharp-ub}, we have
$\mathbb E[T^\sharp] = O(1)$.
$\square$
\end{proof}

\subsection{Complete Proof of Theorem \ref{thm:upper-bound-multi-product}}
Given the initial inventory level $\BFx_T \in \mathcal{S}_\mathcal{I}$,
we first upper bound the value function of the optimal DP policy by considering a relaxed problem where the inventory of the products in $\mathcal{I}$ is $\BFx(\mathcal{I})$ but the inventory of the products in $\mathcal{U}$ is unbounded.
Let $\{\BFx_\tau^*\}_{\tau=1}^T$ be the path of inventory level process for this relaxed problem.
Define $\BFz_\tau^* := \BFx_\tau^*(\mathcal I)$.
By Eq.~\eqref{eq:bellman-star-multi-product}, the value function of this relaxed problem is given by
\[
    \Phi^*_\tau (\BFz^*_\tau) = R(\BFz^*_\tau + \BFDelta_{\tau}(\mathcal{I}))
    + \E\left[\Phi^*_{\tau-1}\left(\BFz^*_{\tau-1} -  \frac{\BFDelta_\tau(\mathcal{I})+\BFxi^*_\tau(\mathcal{I})}{\tau - 1}\right)\right]\qquad \forall \tau=T,\cdots,1.
\]
Clearly, we have $\phi^*_T(\BFx_T) \leq \Phi^*_T(\BFz_T)$. 

Below, we slightly abuse the notation and denote $\vct\Delta_\tau (\mathcal{I}),\vct\xi_\tau^*(\mathcal{I}),\vct\xi_\tau^r(\mathcal{I})$ simply by $\vct\Delta_\tau, \vct\xi_\tau^*, \vct\xi_\tau^r$.
Then, it holds that $\BFz_\tau^* = \BFz_T - \bar{\vct\Delta}_{\to\tau} - \bar{\vct\xi}_{\to\tau}^*$, where $\bar{\vct\Delta}_{\to\tau} = \frac{\vct\Delta_T}{T-1}+\cdots+\frac{\vct\Delta_{\tau+1}}{\tau}$ and $\bar{\vct\xi}_{\to\tau}^* = \frac{\vct\xi_T^*}{T-1}+\cdots +\frac{\vct\xi_{\tau+1}^*}{\tau}$.
By adapting Eq.~\eqref{eq:exp-1} to the multi-product setting, we have
\begin{equation}
    \phi_T^*(\BFx_T) \leq 
    \Phi^*_T(\BFz_T) \leq \textstyle  \E\left[\sum_{\tau=T^\sharp}^T R(\BFz_\tau^*+\vct\Delta_\tau) + (T^\sharp-1)R(\BFz_{T^\sharp-1}')\right], \label{eq:exp-1-multi}
\end{equation}
where 
\[
\BFz_{T^\sharp-1}' :=\arg\max_{\BFz \in \mathrm{proj}_{\mathcal{I}}(\mathcal{D})}\{R(\BFz) \mid \BFz\leq \BFz_{T^\sharp-1}^*\}.
\]

Next, we analyze the inventory process under the re-solving heuristic (for the original problem). Let $\BFz_\tau^r:=\BFx_\tau^r(\mathcal I)$.
By Lemma~\ref{lem:T-sharp-multi-product}, it holds that
$\BFz_\tau^r = \BFz_T - \bar{\vct\xi}_{\to\tau}^r$ for all $\tau\geq T^\sharp-1$. In addition, for all $\tau\geq T^\sharp$, it holds that $r(\BFx_\tau^c) = R(\BFz_\tau^r)$ by Lemma \ref{lem:properties-R} (recall that $\BFx_\tau^c$ is the solution of the fluid model given the right-hand side $\BFx_\tau^r$). 
We adapt Eq.~(\ref{eq:exp-2}) to the multi-product setting and get
\begin{equation}
 \textstyle \phi_T^r(\BFx_T) 
\geq \E\left[\sum_{\tau=T^\sharp}^T r(\BFx_\tau^c) \right]
= \E\left[\sum_{\tau=T^\sharp}^T R(\BFz_\tau^r) \right].
\label{eq:exp-2-multi}
\end{equation}



Generalizing the arguments from Eqs.~(\ref{eq:proof-main-lb-1},\ref{eq:proof-main-lb-2}) and using the second property of Lemma~\ref{lem:properties-R},
we obtain
\begin{align}
    & \quad 
    \textstyle\mathbb E\left[\sum_{\tau=T^\sharp}^T \left(R(\BFz_\tau^*+\vct\Delta_\tau)-R(\BFz_\tau^r)\right)\right]\nonumber\\
    &\leq\E\bigg[ \sum_{\tau=T^\sharp}^T
\bigg(\big\langle\nabla R(\BFz_T), \vct\Delta_\tau-\bar{\vct\Delta}_{\to\tau}+\bar{\vct\xi}_{\to\tau}^\delta-\vct\xi_{\tau}^\delta\big\rangle - (\bar{\vct\xi}_{\to\tau}^r)^\top\nabla^2R(\BFz_T)[\vct\Delta_\tau-\bar{\vct\Delta}_{\to\tau}]\nonumber \\
&+(\bar{\vct\xi}_{\to\tau}^r)^\top\nabla^2 R(\vct z_T) [\vct\xi_{\tau}^\delta-\bar{\vct\xi}_{\to\tau}^\delta]
+ \frac{M}{2}\|\bar{\vct\xi}_{\to\tau}^r\|_{}^2\|\vct\Delta_\tau-\bar{\vct\Delta}_{\to\tau}+\bar{\vct\xi}_{\to\tau}^\delta\|_{} - \frac{m}{2}\|\vct\Delta_\tau-\bar{\vct\Delta}_{\to\tau}+\bar{\vct\xi}_{\to\tau}^\delta\|
_2^2
\bigg)\bigg],
\label{eq:proof-multi-1}
\end{align}
where $\vct\xi_\tau^\delta := \vct\xi_\tau^r-\vct\xi_\tau^*$ and $\bar{\vct\xi}_{\to\tau}^\delta :=\frac{\vct\xi_T^\delta}{T-1}+\cdots+\frac{\vct\xi_{\tau+1}^\delta}{\tau}$.

Define $\vct\eta^{*} = -(\BFz^*_{T^\sharp -1}-\BFz_{T^\sharp -1}')^+$, $\vct\eta^{r} = ([\nabla^2 R(\BFz_T)]^{-1}\nabla R(\BFz_T) - \bar
{\vct\xi}^{r}_{\to T^\sharp -1})^+$ and
$\vct\eta = \vct\eta^{*} + \vct\eta^{r}$, where $(\BFz)^+$ denotes the element-wise positive part of $\BFz$.
Eq.~(\ref{eq:proof-at-T-sharp}) can be generalized to
\begin{align}
&\ 
R(\BFz_{T^\sharp-1}')-R(\BFz_{T^\sharp-1}^r- \eta^r)
=
R(\BFz_{T^\sharp-1}^*+\eta^*)-R(\BFz_{T^\sharp-1}^r- \eta^r)\nonumber\\
\leq &\  \langle \nabla R(\BFz_T), \vct\eta-\bar{\vct\Delta}_{\to T^\sharp -1}+\bar{\vct\xi}_{\to T^\sharp -1}^\delta\rangle - (\bar{\vct\xi}_{\to T^\sharp -1}^r+\vct\eta^r)^\top \nabla^2 R(\BFz_T)[\vct\eta-\bar{\vct\Delta}_{\to T^\sharp -1}+\bar{\vct\xi}^{\delta}_{\to T^\sharp -1}] + \frac{M^2}{8m}\|\bar{\vct\xi}_{\to T^\sharp-1}^r\|_{}^4.
\label{eq:proof-at-T-sharp-multi}
\end{align}

Let $\circ$ denote the element-wise product.
Subtracting Eq.~\eqref{eq:exp-2-multi} from Eq.~\eqref{eq:exp-1-multi} and then combining Eq.~(\ref{eq:proof-at-T-sharp-multi}) with Eq.~(\ref{eq:proof-multi-1}), we obtain
\begin{equation}
\phi_T^*(\BFx_T)-\phi_T^r(\BFx_T) \leq 
\mathbb E\left[\langle\nabla R(\BFz_T), \BFA\rangle - \nabla^2 R(\BFZ_T) \circ \BFB + \nabla^2 R(\BFZ_T) \circ \BFC + \BFD\right] + O(\mathbb E[(T^\sharp-1)]),
    \label{eq:proof-multi-2}
\end{equation}
where
\begin{align*}
\BFA \ &=\ \textstyle\sum_{\tau=T^{\sharp}}^T [\vct\Delta_\tau-\bar{\vct\Delta}_{\to\tau}+\bar{\vct\xi}_{\to\tau}^\delta-\vct\xi_{\tau}^\delta] - (T^\sharp-1)(\bar{\vct\Delta}_{\to T^\sharp-1} - \bar{\vct\xi}_{\to T^\sharp -1}^\delta) + (T^\sharp-1)\vct\eta, \\
\BFB \ &=\ \textstyle\sum_{\tau=T^{\sharp}}^T\bar{\vct\xi}_{\to\tau}^r[\vct\Delta_\tau-\bar{\vct\Delta}_{\to\tau}]^\top
- (T^\sharp-1) \bar{\vct\xi}_{\to T^\sharp-1}^r \bar{\vct\Delta}_{\to T^\sharp-1}^\top + (T^\sharp-1)(\bar{\vct\xi}_{\to T^\sharp-1}^r+\vct\eta^r)\vct\eta^\top, \\
\BFC\ &=\ \textstyle\sum_{\tau=T^{\sharp}}^T\bar{\vct\xi}_{\to\tau}^r[\vct\xi_\tau^\delta - \bar{\vct\xi}_{\to\tau}^\delta]^\top
-(T^\sharp-1) (\bar{\vct\xi}_{\to T^\sharp-1}^r+\vct\eta^r) [\bar{\vct\xi}_{\to T^\sharp-1}^\delta]^\top, \\
\BFD \ &=\ \textstyle\sum_{\tau=T^{\sharp}}^T \left(\frac{M}{2}\|\bar{\vct\xi}_{\to\tau}^r\|_{}^2\|\vct\Delta_\tau-\bar{\vct\Delta}_{\to\tau}+\bar{\vct\xi}_{\to\tau}^\delta\|_{} - \frac{m}{2}\|\vct\Delta_\tau-\bar{\vct\Delta}_{\to\tau}+\bar{\vct\xi}_{\to\tau}^\delta\|_{}^2\right)  
 + (T^\sharp-1) \frac{M^2}{8m}\|\bar{\vct\xi}_{\to T^\sharp-1}^r\|_{}^4.
\end{align*}
{
Recall that in the definitions of $\BFA,\BFB,\BFC,\BFD$, all vectors $\vct\Delta_\tau,\bar{\vct\Delta}_{\to\tau}, \vct\xi_\tau^\delta,\vct{\bar\xi}_{\to\tau}^\delta,\bar{\vct\xi}_{\to\tau}^r,\vct\eta,\vct\eta^r$ are restricted to $\mathcal I$. $\BFA$ is an $|\mathcal I|$-dimensional vector, $\BFB,\BFC$ are $|\mathcal I|\times|\mathcal I|$-dimensional matrices and $\BFD$ is a scalar.
Using the same calculations as in Sec.~\ref{sec:proofs}, $\BFA,\BFB,\BFC,\BFD$ can be reduced to
\begin{align}
\mathbb E[\BFA]
&= \E[(T^\sharp-1)\vct\eta],\label{eq:A-final-multi}\\
\mathbb E[\BFB] &= \mathbb E[(T^\sharp-1)(\bar{\vct\xi}_{\to T^\sharp-1}^r+\vct\eta^r)\vct\eta^\top],\label{eq:B-final-multi}\\
\mathbb E[\BFC] &= \mathbb E\left[-\sum_{t=T^\sharp}^T\frac{\vct\xi_t^r[\vct\xi_t^\delta]^\top}{t-1}\right] - \mathbb E\big[(T^\sharp-1)\vct\eta_r[\bar{\vct\xi}_{\to T^\sharp-1}^\delta]^\top\big].
\end{align}
Generalizing Eq.~(\ref{eq:C-intermediate-1}),
it holds that
\begin{align}
\left\|\mathbb E\left[-\sum_{t=T^\sharp}^T\frac{\vct\xi_t^r[\vct\xi_t^\delta]^\top}{t-1}\right]\right\|_{2} 
&\leq  B_{\xi} \E\left[\sum_{t=2}^T \vct{1}\{T^\sharp \leq t\}
\frac{ \sqrt{\E[\|\vct\xi_t^r-\vct\xi_t^*\|_{}^2\mid \mathcal{F}_{t+1}]}}{t-1}\right]\nonumber\\
&\leq B_\xi\mathbb E\left[\sum_{t=2}^T\vct 1\{T^\sharp\leq t\}\frac{L\|\BFx_t^*+\vct\Delta_t-\BFx_t^r\|_{}}{t-1}\right]\nonumber\\
&\leq B_\xi\mathbb E\left[\sum_{t=2}^T\vct 1\{T^\sharp\leq t\}\frac{L'\|\BFz_t^*+\vct\Delta_t(\mathcal{I})-\BFz_t^r\|_{}}{t-1}\right],
    \label{eq:C-intermediate-multi}
\end{align}
where the second inequality uses Assumption (B5) and the third inequality uses the third property in Lemma \ref{lem:properties-R} with
$L'= L_z L$.
As a result, $\|\mathbb E[\BFC]\|_{2}$ can be upper bounded as
\begin{equation}
\|\mathbb E[\BFC]\|_{2} \leq L'B_\xi\mathbb E\left[\sum_{\tau=T^\sharp}^T \frac{\|\vct\Delta-\bar{\vct\Delta}_{\to\tau}+\bar{\vct\xi}_{\to\tau}^\delta\|_{}}{\tau-1}\right]
+ \mathbb E\left[(T^\sharp-1)\|\vct\eta^r\|_{}\|\bar{\vct\xi}_{\to T^\sharp-1}^\delta\|_{}\right].
    \label{eq:C-final-multi}
\end{equation}
Because
$\nabla R(\BFx_T(\mathcal{I})) - \nabla^2 R(\BFx_T(\mathcal{I})) \bar\BFxi_{\to{T^\sharp}}^r \geq 0$ by the definition of $T^\sharp$, we have
$\|\vct\eta^{r}\|\leq B_\xi / (T^\sharp -1)\ a.s.$
Combining Eq.~(\ref{eq:proof-multi-2}) with Eqs.~(\ref{eq:A-final-multi},\ref{eq:B-final-multi},\ref{eq:C-final-multi}) and using the same derivation that leads to Eq.~(\ref{eq:proof-main-ub-5}), we have
\begin{align}
& \mathbb E\left[\langle\nabla R(\BFz_T), \BFA\rangle - \nabla^2 R(\BFZ_T) \circ \BFB + \nabla^2 R(\BFZ_T)\circ \BFC + \BFD\right]
\ \leq\  
\E\bigg[ B_\xi\|\nabla^2 R(\BFz_T)\|_{2} \|\bar{\vct\xi}^{\delta}_{\to T^\sharp-1}\|_{} \nonumber\\
&+ (T^\sharp-1)\frac{M^2}{8m}\|\bar{\vct\xi}_{\to T^\sharp-1}^r\|_{}^4 +  \sum_{\tau=T^{\sharp}}^T \frac{1}{m}
\left(\bigl(\|\nabla^2 R(\BFz_T)\|_{2}\frac{L'B_\xi}{\tau-1}\bigr)^2 + \frac{M^2}{4}\|\bar{\vct\xi}_{\to\tau}^r\|_{}^4\right) 
 \bigg]. \label{eq:proof-multi-3}
\end{align}
To complete the proof, we upper bound each term in Eq.~(\ref{eq:proof-multi-3}).
First it is easy to verify that
\begin{equation}
\sum_{\tau=T^\sharp}^T \left(\frac{\|\nabla^2 R(\BFz_T)\|_{2}L'B_\xi}{\tau-1}\right)^2 \leq (\|\nabla^2 R(\BFz_T)\|_{2}LB_\xi)^2 \sum_{j=1}^{T-1}\frac{1}{j^2} \leq 2(\|\nabla^2 R(\BFz_T)\|_{2}LB_\xi)^2\quad a.s.
\label{eq:proof-multi-6}
\end{equation}
We next focus on the terms involving $\|\bar{\vct\xi}_{\to\tau}^r\|_{}^4$. 
Note that $\{\|\bar\BFxi_{\to\tau-1}^r\|_{}^4\}$ is a submartingale adapted to the filtration $\{\mathcal{F}_{\tau}\}_{\tau=1}^T$. Let $S_t = \sum_{\tau=t}^{T} (t-1)(\|\bar\BFxi_{\to\tau-1}^r\|_{}^4-\|\bar\BFxi_{\to\tau}^r\|_{}^4)$, then $\{S_t\}$ is also a submartingale. Since $T^\sharp$ is stopping time, we have
\[
\E\left[(T^\sharp-1)\|\bar{\vct\xi}_{\to T^\sharp-1}^r\|_{}^4 +  \sum_{\tau=T^{\sharp}}^T   \|\bar{\vct\xi}_{\to\tau}^r\|_{}^4 \right]
= \E[S_{T^\sharp}]\leq \E[S_{2}]=
\E\left[\sum_{\tau=1}^T   \|\bar{\vct\xi}_{\to\tau}^r\|_{}^4 \right].
\]
It is easy to verify that
\begin{align*}
\E[\|\bar{\vct\xi}_{\to t}^r\|_{}^4]
&\leq \sum_{j,k>t}\frac{\E[\|\vct\xi_j^r\|_{}^2\|\vct\xi_k^r\|_{}^2]}{(j-1)^2(k-1)^2}
\leq B_\xi^4 \left(\sum_{j>t}\frac{1}{(j-1)^2}\right)^2 \leq \frac{4 B_\xi^4}{t^2}.
\end{align*}
Subsequently, 
\begin{align}
\E\left[\frac{M^2}{8m}(T^\sharp -1)\|\bar{\vct\xi}_{\to T^\sharp-1}^r\|_{}^4 +\frac{M^2}{4m} \sum_{\tau=T^\sharp}^T\|\bar{\vct\xi}_{\to\tau}^r\|_{}^4\right]
&\leq \frac{M^2}{4m}\mathbb \sum_{\tau=1}^T \frac{4B_\xi^4}{\tau^2} \leq \frac{M^2B_\xi^4}{2m}.
    \label{eq:proof-multi-4}
\end{align}
Since the Eucliean norm is convex, by Assumption (B4), $\|\bar{\vct\xi}^{\delta}_{\to\tau-1}\|_{}$ is a submartingale. The expectation of $\|\bar{\vct\xi}_{\to T^\sharp-1}^\delta\|_{}$ can be upper bounded by
\begin{equation}
    \E\big[ \|\bar{\vct\xi}^{\delta}_{\to T^\sharp-1}\|_{} \big] \leq 
    \E\big[ \|\bar{\vct\xi}^{\delta}_{1}\|_{} \big]
    \leq \sqrt{\E\big[ \|\bar{\vct\xi}^{\delta}_{1}\|_{}^2 \big]} \leq 
    2B_\xi \sqrt{\sum_{j=1}^{T-1}\frac{1}{j^2}} \leq {3B_\xi}.
    \label{eq:proof-multi-5}
\end{equation}
Finally, combining Eqs.~(\ref{eq:proof-multi-1},\ref{eq:proof-multi-2},\ref{eq:proof-multi-3},\ref{eq:proof-multi-4},\ref{eq:proof-multi-5}) and using Lemma~\ref{lem:T-sharp-multi-product}, we obtain
\begin{align*}
\phi_T^*(\BFx_T)-\phi_T^r(\BFx_T) \leq \frac{M^2B_\xi^4}{2m} + \frac{2(\|\nabla^2 R(\BFz_T)\|_{2}L'B_\xi)^2}{m} +
3\|\nabla^2 R(\BFz_T)\|_{2}B_\xi^2 + O(\E[T^\sharp-1]) = O(1),
\end{align*}
which completes the proof of Theorem~\ref{thm:upper-bound-multi-product}.
}

\section{Conclusion}

In this paper, we analyze a natural re-solving heuristic for the classic price-based revenue management problem with either a single product or multiple products. The heuristic re-solves the fluid model in each period to reset the prices.

{
We establish
two complementary theoretical results. 
First, the re-solving heuristic attains $O(1)$ regret compared against the value of the optimal policy. 
The $O(1)$ regret depends on the shape of the demand function as well as how close the initial inventory level is to certain ``boundaries.''
Going forward, an obvious question is whether the boundary condition can be removed. Our numerical experiment shows that the natural re-solving heuristic may not have $O(1)$ regret when the initial inventory is on the boundary, so the pricing algorithm needs to be modified for that case.

Second, we show that there exists an $\Omega(\ln T)$ gap between the value of the optimal policy and the value of the fluid model. For that reason, our regret analysis does not use the fluid model as a benchmark; the proof directly compares the value of the optimal policy and that of the heuristic.
An interesting future direction is to find a different benchmark that is within $O(1)$ of optimal value, which may help simplify our proof.
}



\section*{Appendix: The Hindsight-Optimum (HO) Benchmark.}


The HO benchmark has been used  to analyze re-solving algorithms
for quantity-based network revenue management \citep{reiman2008asymptotically,bumpensanti2020re}.
Since in quantity-based network revenue management the demand rates are not affected by the (adaptively chosen) prices,
the formulation in \cite{bumpensanti2020re} is not directly applicable to our setting.
Instead, we formulate an HO benchmark following the strategy in \cite{vera2019online} which also considered price-based revenue management
with a finite subset of pries.
\begin{definition}[The HO benchmark]
For any $p$ define random variable $D_T(p) := \sum_{t=1}^T d_t$ as the total realized demand with fixed price $p_t\equiv p$.
A policy $\pi$ is HO-admissible if at time $t$, the price decision $p_t$ depends only on $\{p_{t'},x_{t'},d_{t'}\}_{t'<t}$ and $\{D_T(p)\}_{p\in[\underline{p}, \overline{p}]}$.
The HO-benchmark $R^{\mathrm{HO}}(T,x_0)$ is defined as the expected revenue of the optimal HO-admissible policy $\pi$.
\label{defn:ho-benchmark}
\end{definition}

At a higher level, the HO-benchmark equips a policy with the knowledge of the total realized demand for \emph{each} hypothetical fixed price $p\in[\underline{p}, \overline{p}]$
in hindsight.
Clearly, such policies are more powerful than an ordinary admissible policy which only knows the \emph{expected} demand but not the realized demand
for a specific price $p$.

Our next proposition shows that the HO-benchmark $R^{\mathrm{HO}}(T,x_T)$ has a constant gap compared against
the $Tp^*f(p^*)$ oracle in the single-product setting. Hence, it also has an $\Omega(\ln T)$ gap from the re-solving heuristic and the optimal DP solution. 
The conclusion in Theorem \ref{thm:main-lb} then holds with $Tr(x_T)$ replaced by $R^{\mathrm{HO}}(T,x_T)$.

\begin{proposition}
For any $x_T\in(\underline d,x^{u})$, it holds that $R^{\mathrm{HO}}(T,x_T) \geq Tr(x_T) - O(1)$ where $y_0=x_T T$.
\label{prop:ho-bad}
\end{proposition}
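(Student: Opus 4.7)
The plan is to exhibit a specific HO-admissible policy whose expected revenue is $Tr(x_T)-O(1)$; since $R^{\mathrm{HO}}(T,x_T)$ is the supremum of expected revenue over HO-admissible policies, the claim follows. Let $p^*:=f^{-1}(x_T)$ denote the fluid-optimal price, so that $Tr(x_T)=p^*\,y_0$. Using the hindsight information $\{D_T(p)\}_{p\in[\underline p,\overline p]}$, I define $\hat p$ as the solution of $D_T(\hat p)=y_0$ in $[\underline p,\overline p]$ (unique with high probability by the strict monotonicity of $f$) when such a solution exists, and $\hat p=p^*$ otherwise. The policy $\pi^{\mathrm{HO}}$ that commits to the constant price $\hat p$ in every period is HO-admissible, since $\hat p$ is a measurable function of the HO information alone. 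On the event where $\hat p$ is well defined, the total realized demand equals $D_T(\hat p)=y_0$, so all inventory is sold at price $\hat p$ with revenue exactly $\hat p\,y_0$ (no waste, no unmet demand).

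To evaluate $\mathbb{E}[\hat p\,y_0]$, I would write $\hat p=p^*+\eta$ and $Z_T(p):=\sum_{t=1}^T \xi_t(p)$, so that the defining equation reads $T(f(\hat p)-f(p^*))=-Z_T(\hat p)$. Taylor expansion around $p^*$ gives $\eta=-Z_T(\hat p)/(Tf'(p^*))+O(\eta^2)$, hence $|\eta|=O(|Z_T(\hat p)|/T)$. Substituting yields
\[
\hat p\,y_0\;=\;Tr(x_T)\;-\;\frac{x_T}{f'(p^*)}\,Z_T(\hat p)\;+\;O\!\Bigl(\tfrac{x_T}{T}\,|Z_T(\hat p)|^2\Bigr).
\]
The second-order term will have expectation $O(1)$ once I show $\mathbb{E}[|Z_T(\hat p)|^2]=O(T)$, which I plan to obtain from the self-bounding estimate $\mathbb{E}[|Z_T(\hat p)|^2]\le 2\mathbb{E}[|Z_T(p^*)|^2]+2L^2\mathbb{E}[|Z_T(\hat p)|^2]/T$: the second term arises from summing the per-period Wasserstein bound of (A5) across the independent martingale increments $\xi_t(\hat p)-\xi_t(p^*)$ and using $|f(\hat p)-f(p^*)|=|Z_T(\hat p)|/T$, while the first is $O(T)$ by the boundedness assumption in (A4). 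For the first-order term, (A4) at the \emph{deterministic} price $p^*$ gives $\mathbb{E}[Z_T(p^*)]=0$, so by Cauchy--Schwarz and (A5),
\[
\bigl|\mathbb{E}[Z_T(\hat p)]\bigr|\;=\;\bigl|\mathbb{E}[Z_T(\hat p)-Z_T(p^*)]\bigr|\;\le\;\sqrt{TL^2\,\mathbb{E}[|f(\hat p)-f(p^*)|^2]}\;=\;O(1).
\]

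The principal obstacle is precisely that the data-dependent price $\hat p$ is not adapted to any filtration under which the martingale identity $\mathbb{E}[Z_T(p)]=0$ applies at $p=\hat p$; Assumption (A5) is the key device that lets me transfer $Z_T(\hat p)$ back to $Z_T(p^*)$ at $O(1)$ cost by coupling the noise processes indexed by different prices. A secondary technical point is to control the ``bad event'' where no $\hat p\in[\underline p,\overline p]$ solves $D_T(\hat p)=y_0$; this forces $|Z_T(p^*)|/T>\min(x_T-\underline d,\overline d-x_T)$, which by Hoeffding's inequality applied to the bounded martingale differences $\{\xi_t(p^*)\}$ has probability exponentially small in $T$ and contributes only $O(1)$ to the expected revenue gap. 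Assembling these ingredients yields $\mathbb{E}[\text{revenue of }\pi^{\mathrm{HO}}]=Tr(x_T)+O(1)$, which implies $R^{\mathrm{HO}}(T,x_T)\ge Tr(x_T)-O(1)$ as claimed.
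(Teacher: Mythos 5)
Your core idea---use the hindsight information to pick a single (near) market-clearing price, and show that the revenue loss relative to $Tr(x_T)$ is second order in the aggregate demand fluctuation, hence $O(T\cdot\E[\bar\xi^2])=O(1)$---is exactly the paper's. The difference is that the paper proves the proposition only in the special case where the noise is i.i.d.\ and does not depend on the posted price, so that $D_T(p)=Tf(p)+T\bar\xi$ with a single shared $\bar\xi$ across all prices. In that regime $Z_T(\hat p)\equiv Z_T(p^*)=T\bar\xi$, so $\E[Z_T(\hat p)]=0$ exactly and $\E[|Z_T(\hat p)|^2]=O(T)$ trivially, and your argument collapses to the paper's two-line Taylor computation; there it is correct.

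The gap is in your attempt to cover price-dependent noise. Both your self-bounding estimate $\E[|Z_T(\hat p)|^2]\le 2\E[|Z_T(p^*)|^2]+2L^2\E[|Z_T(\hat p)|^2]/T$ and the bound $|\E[Z_T(\hat p)]|\le\sqrt{TL^2\,\E[|f(\hat p)-f(p^*)|^2]}$ implicitly use
\[
\E\Bigl[\bigl|\textstyle\sum_{t=1}^T(\xi_t(\hat p)-\xi_t(p^*))\bigr|^2\Bigr]\;\le\;\sum_{t=1}^T\E\bigl[|\xi_t(\hat p)-\xi_t(p^*)|^2\bigr],
\]
i.e.\ orthogonality of the coupled differences across periods. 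That orthogonality fails precisely because $\hat p$ is a function of the entire noise field: once the common random argument $\hat p$ is substituted, the cross terms $\E[(\xi_t(\hat p)-\xi_t(p^*))(\xi_s(\hat p)-\xi_s(p^*))]$ need not vanish. Moreover, Assumption (A5) only supplies a pairwise, per-period marginal coupling for \emph{given} prices; in the paper's proof of Theorem \ref{thm:main-ub} this is usable because both prices being coupled at time $t$ are $\mathcal F_{t+1}$-measurable, whereas your $\hat p$ depends on the realization of $\xi_t(\hat p)$ itself, so even the conditional coupling is circular, and (A5) does not pin down a joint law for the process $p\mapsto\{\xi_t(p)\}_t$ that Definition \ref{defn:ho-benchmark} implicitly requires. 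Without orthogonality the generic Cauchy--Schwarz bound gives only $\E[|Z_T(\hat p)-Z_T(p^*)|^2]\le L^2\E[|Z_T(\hat p)|^2]$, which makes the self-bounding step vacuous for $L\ge 1/\sqrt 2$ and yields only $|\E[Z_T(\hat p)]|=O(\sqrt T)$, so the first-order term would contribute $O(\sqrt T)$ rather than $O(1)$. To repair your route you would either restrict to price-independent noise, as the paper does, or control $\sup_p|Z_T(p)-Z_T(p^*)|$ uniformly in $p$ (e.g.\ by chaining), which is a substantially stronger statement. The Taylor expansion and the Hoeffding treatment of the bad event are fine.
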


\begin{proof}{Proof of Proposition \ref{prop:ho-bad}.}
Consider a setting where $\xi_1,\cdots,\xi_T$ are i.i.d. It is clear that knowing $\{D_T(p)\}_{p\in[\underline{p}, \overline{p}]}$ is equivalent to knowing $\bar\xi = \frac{1}{T}\sum_{t=1}^T\xi_t$,
since $D_T(p) = T(f(p)+\bar\xi)$ for all $p$.
Now consider the policy of fixed prices $p_t \equiv g(x_T+\bar\xi)$.
Since $\E[\bar\xi]=0$ and $\E[\bar\xi^2] = O(1/T)$,
the expected regret of such a policy can be bounded as
\begin{align*}
T\E_{\bar\xi}\left[(x_T+\bar\xi)f^{-1}(x_T+\bar\xi)\right] - Tx_Tf^{-1}(x_T)
&= T\E_{\bar\xi}[r(x_T+\bar\xi)-r(x_T)]
\geq T\E_{\bar\xi}\left[r'(x_T)\bar\xi - \frac{m}{2}\bar\xi^2\right]\\
&= - \frac{m}{2}T\E[\bar\xi^2] = -\frac{m}{2}\times O(1) = -O(1),
\end{align*}
which is to be demonstrated. $\square$
\end{proof}

%
%
%

\bibliography{refs}
\bibliographystyle{ormsv080}

\end{document}